\tikzstyle{emptycircle}=[fill=white, draw=black, shape=circle, tikzit shape=circle, inner sep=0pt, minimum size=7mm]
\tikzstyle{output}=[fill=white, draw=black, shape=circle, minimum size=3pt]
\tikzstyle{newstyle}=[draw=black, ->]
\newcolumntype{Z}{>{\centering\let\newline\\\arraybackslash\hspace{0pt}}X}
\DeclareMathOperator{\cb}{Cb}
\DeclareMathOperator{\tp}{tp}
\DeclareMathOperator{\alg}{alg}
\DeclareMathOperator{\ord}{ord}
\DeclareMathOperator{\adim}{adim}
\DeclareMathOperator{\trdeg}{trdeg}
\DeclareMathOperator{\DCF}{DCF}
\DeclareMathOperator{\NEL}{NumExpLoc}
\DeclareMathOperator{\NEG}{NumExpGlob}
\DeclareMathOperator{\df}{defect}
\DeclareMathOperator{\FD}{FD}
\DeclareMathOperator{\I}{\mathcal{I}}
\DeclareMathOperator{\V}{\mathcal{V}}
\DeclareMathOperator{\Q}{\mathbb{Q}}
\DeclareMathOperator{\C}{\mathbb{C}}
\newtheorem{lemma}{Lemma}[section]
\newtheorem{proposition}[lemma]{Proposition}
\newtheorem{corollary}[lemma]{Corollary}
\newtheorem{theorem}[lemma]{Theorem}
\newtheorem{conclusion}[lemma]{Conclusion}
\theoremstyle{definition}
\newtheorem{our_definition}[lemma]{Definition}
\newtheorem{example}[lemma]{Example}
\newtheorem{notation}[lemma]{Notation}
\newtheorem{remark}[lemma]{Remark}
\title{Multi-experiment parameter identifiability of ODEs\\ and model theory}
\author{Alexey Ovchinnikov, Anand Pillay, Gleb Pogudin\footnote{Gleb Pogudin's prior addresses: New York University, Courant Institute of Mathematical Sciences; Higher School of Economics (Moscow), Department of Computer Science}, and Thomas Scanlon \smallskip\\ \small
CUNY Queens College, Department of Mathematics,
65-30 Kissena Blvd, Queens, NY 11367, USA \\ \small
CUNY Graduate Center, Ph.D. Programs and Mathematics and Computer Science, 365 Fifth Avenue,
New York, NY 10016, USA\\ \small
\href{mailto:aovchinnikov@qc.cuny.edu}{aovchinnikov@qc.cuny.edu} \vspace{0.05in} \\ 
 \small University of Notre Dame, Department of Mathematics, Notre Dame, IN 46556, USA \\ \small
\href{mailto:Anand.Pillay.3@nd.edu}{Anand.Pillay.3@nd.edu}\\
 \small LIX, CNRS, \'Ecole Polytechnique, Institute Polytechnique de Paris, Palaiseau, France \\ \small
\href{mailto:gleb.pogudin@polytechnique.edu}{gleb.pogudin@polytechnique.edu}\\ \small University of California, Berkeley, Department of Mathematics, Berkeley, CA 94720-3840, USA\\
\small\href{mailto:scanlon@math.berkeley.edu}{scanlon@math.berkeley.edu}}
\date{}
\begin{document}

\maketitle
\vspace{-0.2in}
\begin{abstract}
Structural identifiability is a property of an ODE model with parameters that allows for the parameters to be determined from continuous noise-free data.
This is a natural prerequisite for practical identifiability.
Conducting multiple independent experiments could make more parameters or functions of parameters identifiable, which is a desirable property to have. 
How many experiments are sufficient? 
In the present paper, we provide an algorithm to determine the exact number of experiments for multi-experiment local identifiability and obtain an upper bound that is off at most by one for the number of experiments for multi-experiment global identifiability.
\\ 
Interestingly, the main theoretical ingredient of the algorithm has been discovered and proved using model theory (in the sense of mathematical logic).
We hope that this unexpected connection will stimulate interactions between applied algebra and model theory, and we provide a short introduction to model theory in the context of parameter identifiability.
As another related application of model theory in this area, we construct a nonlinear ODE system with one output such that single-experiment and multiple-experiment identifiability are different for the system. This contrasts with recent results about single-output linear systems.
\\ 
We also present a Monte Carlo randomized version of the algorithm with a polynomial arithmetic complexity.
Implementation of the algorithm is provided and its performance is demonstrated on several examples.
The source code is available at~\url{https://github.com/pogudingleb/ExperimentsBound}.
\end{abstract}

\section{Introduction}

Structural identifiability  is a property of an ODE system with parameters that allows for the parameters to be uniquely  determined (global identifiability) or determined up to finitely many choices (local identifiability) from noiseless data and sufficiently exciting inputs (also known as the persistence of excitation, see~\cite{LG94, V18, XiaMoog}). 
Performing  structural identifiablity analysis is an important first step in evaluating and, if needed, adjusting the system before a reliable practical parameter identification is performed. 

For an ODE model, some of the parameters or functions of parameters could be non-identifiable from a single experiment, but these parameters could become identifiable if one conducts more than one experiment~\cite{VECB, allident}.
Knowing the number of experiments to be conducted to achieve the maximal possible identifiability (that is, if the  identifiability did not occur after this many experiments, it will not occur after additional experiments) is important for designing experimental protocols involving several experiments~\cite[Section III.B]{VECB}.
In particular, making this number smaller would allow for less expensive experimental protocols.
Also, knowing these bounds, one can use existing software for assessing local~\cite{Sedoglavic, KAJ2012, STRIKE_GOLDD} or global~\cite{sian} single-experiment identifiability to check multi-experiment identifiability of parameters (or functions of parameters) of interest.
Note that, due to~\cite[Theorem~19]{allident}, one can alternatively use software based on input-output equations~\cite{DAISY_IFAC, COMBOS} to find the multi-experiment identifiable functions, but this approach does not determine
 the number of experiments.

One can find such a number of experiments for the case of local identifiability using the algorithm presented in~\cite{VECB} if all parameters are locally identifiable.
In~\cite[Section 4]{allident}, we gave an algorithm computing, among other things, an upper bound for the number of experiments to achieve the maximal possible global identifiability. 
The proposed algorithm had two drawbacks: it used the Rosenfeld-Gr\"obner algorithm for differential elimination, which may be computationally very expensive, and the resulting bound could be arbitrarily far from the exact one.

In this paper, we 
present an algorithm that computes:
\begin{itemize}
    \item the smallest number of experiments to achieve the maximal possible local identifiability,
    \item the number of experiment to achieve the maximal possible global identifiability so that this number exceeds the minimal such number by at most one.
\end{itemize}
We present a randomized Monte Carlo version of this algorithm having polynomial arithmetic complexity (see Section~\ref{sec:alg_theory}).
We have implemented this algorithm in Julia language, demonstrated its performance on several examples, and compared with the algorithm from~\cite{allident} (see~Section~\ref{sec:impl_ex}).

Our algorithm is based on theoretical properties of multi-experiment identifiability  that we establish (summarized in Section~\ref{sec:main_results}).
The process of discovery and establishing of these properties originated from model theory (in the sense of mathematical logic).
The use of model theory in this area is novel, so we give a brief overview here. Already differential algebra plays a large role in structural identifiability (see, e.g., \cite{LG94}).
Differential algebra and the study of solution sets of ODEs in differential rings and fields, are enhanced by model-theoretic perspectives and methods, especially from \emph{stability theory}.  In particular, differential fields of definition (of differential ideals) are special cases of \emph{canonical bases} from model theory. Using model-theoretic ideas in a non-trivial way, we will prove new quantitative results on recovering such differential fields of definitions from  sufficiently many \emph{independent solutions}. 
In Section~\ref{sec:model_theory}, we will elaborate on the relationship between the setup of multi-experiment  
identifiability and that of differential algebra/model theory, where we will give additional references.

We also use model theoretic tools to construct an example contrasting with recent results about multi-experiment identifiability of linear systems~\cite{OPT19}.
\cite[Theorem~1]{OPT19} implies that single-experiment and multi-experiment identifiability are the same thing for linear ODE systems with one output. 
In Section~\ref{sec:counterexample}, we show a series of non-linear systems for which this is not the case.

The paper is organized as follows.
In Section~\ref{sec:preliminaries}, we give basic definitions from differential algebra and structural identifiability.
Section~\ref{sec:main_results} summarizes our main results.
Section~\ref{sec:counterexample} contains the example of a single-output system for which single-experiment and multi-experiment identifiability do not coincide.
 In section~\ref{sec:proofs}, we give an algebraic proof of Theorem~\ref{thm:num_exp}, which is the main theoretical ingredient of our algorithm.
In Section~\ref{sec:alg}, we present our algorithm, analyze its complexity, describe our implementation, and demonstrate it on a set of examples (including comparison with the algorithm from~\cite{allident}).
 In Section~\ref{sec:model_theory}, we describe the connections between identifiability and model theory and explain the model-theoretic context of the results in this paper.
 The section is aimed at readers who are interested in knowing why and how model-theoretic methods are useful in or relevant to identifiability problems. Although the reader need not be a specialist in model theory, they should either be acquainted with the basic notions or be willing to follow up the references.

Our implementation together with all  examples used in the paper can be found at~\url{https://github.com/pogudingleb/ExperimentsBound}.

%%%%%%%%%%%%%%%%%%%%%%%%%%%%%%%%%%%%%%%%%%%%%%%

\section{Preliminaries}\label{sec:preliminaries}

\subsection{Differential algebra}
\begin{our_definition}[Differential rings and fields]
\begin{itemize}
\item[]
  \item A {\em differential ring} $(R,\,')$ is a commutative ring with a derivation $':R\to R$, that is, a map such that, for all $a,b\in R$, $(a+b)' = a' + b'$ and $(ab)' = a' b + a b'$. 
  A differential ring that is also a field is called \emph{a differential field}.
  
  \item For an extension of differential fields $F \subset E$ and elements $a_1, \ldots, a_s \in E$, let $F\langle a_1, \ldots, a_n\rangle$  denote the smallest differential subfield of $E$ containing $F$ and $a_1, \ldots, a_n$.
  
\end{itemize}
\end{our_definition}

\begin{our_definition}[Differential polynomials and differential ideals]
  \begin{itemize}
  \item[]
      \item The {\em ring of differential polynomials} in the variables $x_1,\ldots,x_n$ over a field $K$ is the ring $K[x_j^{(i)}\mid i\geqslant 0,\, 1\leqslant j\leqslant n]$ with a derivation defined on the ring by $(x_j^{(i)})' := x_j^{(i + 1)}$. 
  This differential ring is denoted by $K\{x_1,\ldots,x_n\}$.
     \item An ideal $I$ of a differential ring $(R,\ ')$ is called a {\em differential ideal} if, for all $a \in I$, $a'\in I$. For $F\subset R$, the smallest differential ideal containing set $F$ is denoted by $[F]$.
    \item For an ideal $I$ and element $a$ in a ring $R$, we denote $I \colon a^\infty = \{r \in R \mid \exists \ell\colon a^\ell r \in I\}$.
     This set is  an ideal in $R$.
  \end{itemize}
\end{our_definition}

%%%%%%%%%%%%%%%%%%%%%%%%%%%%%%%%%%%%%%%%%%%%%%%%%%

\subsection{Identifiability}

We will consider \emph{an algebraic differential model}
\begin{equation}\label{eq:sigma}
    \Sigma := \begin{cases}
      \bar{x}' = \bar{f}(\bar{x}, \bar{\mu}, \bar{u}),\\
      \bar{y} = \bar{g}(\bar{x}, \bar{\mu}, \bar{u}),
    \end{cases}
\end{equation}
where 
\begin{itemize}
    \item $\bar{f} = (f_1, \ldots, f_n)$ and $\bar{g} = (g_1, \ldots, g_m)$ are tuples of rational functions 
    over
    $\mathbb{C}$;
    \item $\bar{x}$, $\bar{u}$, $\bar{y}$ are state, input, and output variables, respectively;
    \item $\bar{\mu} = (\mu_1, \ldots, \mu_\ell)$ are parameters.
\end{itemize}
The analytic notion of identifiability~\cite[Definition~2.5]{HOPY2020} is equivalent~(see \cite[Proposition~3.4]{HOPY2020} and~\cite[Proposition~4.7]{OPT19}) to the following algebraic definition, which we will use.

We write $\bar{f} = \frac{\bar{F}}{Q}$ and $\bar{g} = \frac{\bar{G}}{Q}$, where $\bar{F}$ and $\bar{G}$ are tuples of polynomials over $\mathbb{C}(\bar{\mu})$ 
and $Q$ is the common denominator of $\bar{f}$ and $\bar{g}$. Here we consider $\C(\bar{\mu})$ as a differential field of constants.
Then we define a differential ideal
\begin{equation}\label{eq:Isigma}
    I_{\Sigma} := [Qx_1' - F_1, \ldots, Qx_n' - F_n, Qy_1 - G_1, \ldots, Qy_m - G_m] \colon Q^\infty \subset \mathbb{C}(\bar{\mu})\{\bar{x}, \bar{y}, \bar{u}\}.
\end{equation}
Observe that every solution of~\eqref{eq:sigma} is a solution of $I_\Sigma$.

\begin{our_definition}[Generic solution]\label{def:generic_solution}
  A tuple $(\bar{x}^\ast, \bar{y}^\ast, \bar{u}^\ast)$ from a differential field $k \supset \mathbb{C}(\bar{\mu})$ is called 
  \emph{a generic solution} of~\eqref{eq:sigma} if, for every differential polynomial $P \in \mathbb{C}(\bar{\mu})\{\bar{x}, \bar{y}, \bar{u}\}$, 
  \[
    P(\bar{x}^\ast, \bar{y}^\ast, \bar{u}^\ast) = 0 \iff P \in I_{\Sigma}.
  \]
\end{our_definition}

\begin{remark}
  \cite[Lemma~3.2]{HOPY2020} implies that $I_{\Sigma}$ is a prime differential ideal. 
  Therefore, it has a generic solution.
\end{remark}

\begin{our_definition}[Identifiability: single-experimental]\label{def:ident_si}
  For a model $\Sigma$ in~\eqref{eq:sigma}, a rational function $h \in \mathbb{C}(\bar{\mu})$ is said to be \emph{globally (resp., locally) single-experiment identifiable} (SE-identifiable) if, for every generic solution $ ( \bar{x}^\ast,      \bar{y}^\ast, \bar{u}^\ast)$ of~\eqref{eq:sigma} considered as a system of differential equations over $\mathbb{C}(\bar{\mu})$, we have
  \[
    h(\bar{\mu}) \in \mathbb{C}\langle \bar{y}^\ast, \bar{u}^\ast \rangle\quad\quad \text{(resp., $h(\bar{\mu})$ is algebraic over $\mathbb{C}\langle \bar{y}^\ast, \bar{u}^\ast \rangle$)}.
  \]
\end{our_definition}

\begin{remark}
  The equivalence of Definition~\ref{def:ident_si} to the more common analytic definition of identifiability has been established in~\cite[Propsition~3.4]{HOPY2020}.  For a comparison with other definitions, see \cite[Section~2.1.1]{ADM2020}.
\end{remark}

\begin{our_definition}[Identifiability defect]\label{def:defect}
  For a model $\Sigma$ in~\eqref{eq:sigma}, we define the \emph{identifiability defect} as
  \[
    \df(\Sigma) := \trdeg_{\C\langle \bar{y}^\ast, \bar{u}^\ast \rangle} \C(\bar{\mu})\langle \bar{y}^\ast, \bar{u}^\ast \rangle,
  \]
  where $(\bar{x}^\ast, \bar{y}^\ast, \bar{u}^\ast)$ is any generic solution of~\eqref{eq:sigma} (one can show that the defect does not depend on the choice of the generic solution).
  
  For example, $\df(\Sigma) = 0$ implies that all the parameters are locally identifiable.
\end{our_definition}

\begin{our_definition}[{Identifiability: multi-experimental, \cite[Definition~16]{allident}}]\label{def:ident:multi}
  \begin{itemize}
  \item[]
      \item For a model $\Sigma$ and a positive integer $r$, we define \emph{the $r$-fold replica} of $\Sigma$ as
      \[
      \Sigma_r := \begin{cases}
          \bar{x}_i' = \bar{f}(\bar{x}_i, \bar{\mu}, \bar{u}_i), \;\; i = 1, \ldots, r,\\
          \bar{y}_i = \bar{g}(\bar{x}_i, \bar{\mu}, \bar{u}_i), \;\; i = 1, \ldots, r,
      \end{cases}
      \]
      where $\bar{x}_1, \ldots, \bar{x}_r, \bar{y}_1, \ldots, \bar{y}_r, \bar{u}_1, \ldots, \bar{u}_r$ are new tuples of indeterminates (note that the vector of parameters is not being replicated).
      
      \item For a model $\Sigma$, a rational function $h \in \mathbb{C}(\bar{\mu})$ is called \emph{globally (resp., locally) multi-experimental identifiable} (ME-identifiable) if there exists a positive integer $r$ such that $h(\bar{\mu})$ is globally (resp., locally) SE-identifiable in $\Sigma_r$.
  \end{itemize}
\end{our_definition}

\begin{notation}
   For a differential model $\Sigma$ we define $\mathrm{NumExpGlob}(\Sigma)$ (resp., $\mathrm{NumExpLoc}(\Sigma)$) as the smallest integer $r$ such that, for every $h(\bar{\mu}) \in \mathbb{C}(\bar{\mu})$, the following are equivalent:
   \begin{itemize}
     \item $h$ is globally (resp., locally) multi-experimental identifiable for $\Sigma$;
     \item $h$ is globally (resp., locally) single-experimental identifiable for $\Sigma_r$.
   \end{itemize}
\end{notation}

\begin{remark}
A simple family of linear models $\Sigma_r$ that reaches arbitrarily high values for the number of experiments, $\NEL(\Sigma_r)=\NEG(\Sigma_r)=r$, is \cite[Example~30]{allident}.
\end{remark}

%%%%%%%%%%%%%%%%%%%%%%%%%%%%%%%%%%%%%%%%%%%%%%%%%%

%%%%%%%%%%%%%%%%%%%%%%%%%%%%%%%%%%%%%%%%%%%

\section{Main results}
\label{sec:main_results}

Our results consist of a theoretical part and algorithms building upon the theory.
The theoretical contribution is summarized in the statements below.
In particular, Theorem~\ref{thm:num_exp} is the basis for an algorithm for computing $\NEL(\Sigma)$ and obtaining an upper bound for $\NEG(\Sigma)$ that is off at  most by one.
 \cite[Theorem~4.2]{OPT19} implies that, for single-output linear models, identifiable and multi-experiment identifiable functions coincide. Conclusion~\ref{thm:example} indicates that this  does not generalize to nonlinear models.

\begin{theorem}\label{thm:num_exp}
  For every algebraic differential model $\Sigma$ of the form~\eqref{eq:sigma}, we have:\vspace{0.2cm}
  \begin{enumerate}
      \item $\NEL(\Sigma) = \min \{r \mid \df(\Sigma_r) = \df(\Sigma_{r + 1}) \}$.
      \item $\NEL(\Sigma) \leqslant \NEG(\Sigma) \leqslant \NEL(\Sigma) + 1$.
  \end{enumerate}
\end{theorem}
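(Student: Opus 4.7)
For convenience, introduce $F_r := \C\langle \bar y_1^\ast, \bar u_1^\ast, \ldots, \bar y_r^\ast, \bar u_r^\ast\rangle$, and the subfields $L_r := F_r^{\alg} \cap \C(\bar\mu)$ and $G_r := F_r \cap \C(\bar\mu)$ of $\C(\bar\mu)$, consisting of locally resp.\ globally SE-identifiable functions in $\Sigma_r$. My first step is to re-express the defect in terms of $L_r$. Since $L_r$ is algebraically closed in $\C(\bar\mu)$, the extension $\C(\bar\mu)/L_r$ is regular; consequently $\C(\bar\mu)$ is linearly disjoint from $F_r(L_r)$ over $L_r$, and transcendence-degree arithmetic yields
\[
\df(\Sigma_r) \;=\; \trdeg_{F_r} F_r(\bar\mu) \;=\; \trdeg_{L_r} \C(\bar\mu) \;=\; \ell - \trdeg_\C L_r,
\]
where $\ell = |\bar\mu|$. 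Since $L_r \subseteq L_{r+1}$ are both algebraically closed in $\C(\bar\mu)$, equality of defects $\df(\Sigma_r) = \df(\Sigma_{r+1})$ is equivalent to $L_r = L_{r+1}$.

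The crux of Part~1 is then the stabilization claim: $L_r = L_{r+1}$ forces $L_s = L_r$ for every $s \geq r$. By induction, it suffices to prove $L_{r+1} = L_{r+2}$. My plan is to use that the experiments $\bar\xi_i := (\bar y_i^\ast, \bar u_i^\ast)$ form a Morley sequence over $\C(\bar\mu)$ in the generic type $p$ of $I_\Sigma$, so that $L_\infty := \bigcup_s L_s$ coincides with the intersection of the algebraic closure of $\cb(p)$ with $\C(\bar\mu)$. Stabilization then follows from the classical fact in the $\omega$-stable theory $\DCF_0$ that the canonical base lies in the algebraic closure of a sufficiently long initial segment of any Morley sequence and, once captured, remains captured under further independent extensions. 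Algebraically this manifests as the monotonicity (``diminishing returns'') of the decrements $\df(\Sigma_r) - \df(\Sigma_{r+1})$ in $r$: once the decrement drops to zero, it stays at zero. I expect this stabilization step to be the main obstacle of the proof.

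For Part~2, I would first establish that $L_r$ coincides with $\overline{G_r}^{\,\C(\bar\mu)}$, the relative algebraic closure of $G_r$ in $\C(\bar\mu)$: the coefficients of the minimal polynomial of any $h \in L_r$ over $F_r$ are symmetric in the Galois conjugates of $h$ and therefore descend to $F_r \cap \C(\bar\mu) = G_r$, making $h$ algebraic over $G_r$. This immediately gives $\NEL(\Sigma) \leq \NEG(\Sigma)$: if $G_r = G_\infty$ then $L_r = L_\infty$. For the bound $\NEG(\Sigma) \leq \NEL(\Sigma) + 1$, set $N := \NEL(\Sigma)$ and take any $h \in G_\infty \subseteq L_\infty = L_N$; then $h$ is algebraic over $F_N$ with finitely many conjugates, corresponding to the finitely many parameter-orbits in $\C(\bar\mu)$ compatible with $\Sigma_N$. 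Because $\bar u_{N+1}^\ast$ is differentially transcendental over the first $N$ observations, distinct orbits produce distinct outputs through it, so the extra experiment singles out the true orbit and forces $h \in F_{N+1} \cap \C(\bar\mu) = G_{N+1}$, yielding $\NEG(\Sigma) \leq N + 1$.
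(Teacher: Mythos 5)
Your framework matches the paper's: the defect drops are controlled by how much of the canonical base $\cb(p)=\FD_{\C(\bar\mu)}(\bar y^\ast,\bar u^\ast)$ has been captured by independent realizations, and the classical fact that the canonical base lies in a long enough Morley sequence gives eventual stabilization (this is Proposition~\ref{prop:existence_N}/\ref{prop:existence_N_mt} in the paper). But the two steps you flag or gloss over are exactly where the paper has to work, and your proposal does not supply them. First, the ``no false plateaus'' claim --- that the \emph{first} $r$ with $\df(\Sigma_r)=\df(\Sigma_{r+1})$ already satisfies $L_r=L_\infty$ --- does not follow from the classical Morley-sequence fact, which only produces \emph{some} $N$ with $\cb(p)\subseteq\operatorname{acl}(\bar a_1,\dots,\bar a_N)$; a priori the transcendence degree could pause for one step and then drop again. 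The paper's Proposition~\ref{prop:bound_N} proves monotonicity by writing $\trdeg_{k_0\langle\bar a_1,\dots,\bar a_s\rangle}k=\trdeg_{k_0\langle\bar a_1,\dots,\bar a_s\rangle}\bar c+\trdeg_{F}k$ for a finite tuple $\bar c$ generating the field of definition (Lemma~\ref{lem:trdeg}) and then analyzing the descending chain of varieties $X_s=\V_{K/k_0\langle\bar a_1,\dots,\bar a_s\rangle}(\bar c)$: the key Claim is that any component common to $X_s$ and $X_{s+1}$ must already equal $\{\bar c\}$, which is proved by propagating that component through all later $X_j$ via indiscernibility (Lemma~\ref{lem:map_independent}) and Lemma~\ref{lem:almost_intersection} until it meets $X_N=\{\bar c\}$. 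You explicitly defer this step (``the main obstacle''), so Part~1 is incomplete precisely at its crux.

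Second, your argument for $\NEG\leqslant\NEL+1$ is not a proof. It appeals to $\bar u_{N+1}^\ast$ being differentially transcendental to ``single out the true orbit,'' but the theorem must hold for systems with no inputs at all (e.g., the paper's own example~\eqref{eq:counterexsys}), and even with inputs the assertion that distinct conjugates of $h$ over $F_N$ yield distinct outputs in one further experiment is exactly the statement to be proved, not a consequence of transcendence of the input. In the paper this is part~(2) of Proposition~\ref{prop:bound_N}: once $\adim X_r=\adim X_{r+1}$, the common-component Claim forces $X_{r+1}=\{\bar c\}$, i.e., $\bar c$ becomes \emph{rational}, not merely algebraic, over $r+1$ experiments. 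A smaller but real flaw: your lemma $L_r=\overline{G_r}^{\,\C(\bar\mu)}$ is justified by saying the coefficients of the minimal polynomial of $h$ over $F_r$ ``descend to $G_r$,'' but those coefficients lie in $F_r$ and there is no reason they lie in $\C(\bar\mu)$, so they need not land in $F_r\cap\C(\bar\mu)$; the statement that the locally identifiable field is algebraic over the globally identifiable one requires a separate argument (the paper invokes it only in the multi-experiment form, where it follows from the field-of-definition description).
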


\begin{corollary}
  For every algebraic differential model $\Sigma$ of the form~\eqref{eq:sigma} with $\ell$ parameters:
  \[
    \NEL(\Sigma) \leqslant \ell\quad\text{ and }\quad\NEG(\Sigma) \leqslant \ell + 1.
  \]
\end{corollary}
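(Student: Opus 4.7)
The plan is to derive the corollary from Theorem~\ref{thm:num_exp} by a short counting argument. Part~(2) of the theorem yields $\NEG(\Sigma) \leq \NEL(\Sigma) + 1$ for free, so the task reduces to proving $\NEL(\Sigma) \leq \ell$.

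Write $d_r := \df(\Sigma_r)$. Two observations: each $d_r \leq \ell$, since $d_r$ is the transcendence degree of a tuple of length $\ell$ over the observed differential field; and the sequence $(d_r)_{r \geq 1}$ is non-increasing, since enlarging $r$ only enlarges the observed field. By Theorem~\ref{thm:num_exp}(1), $\NEL(\Sigma)$ is the first index at which two consecutive $d_r$'s agree, so prior to that the sequence drops strictly by at least $1$ at each step. In particular, if $d_1 \leq \ell - 1$, then already $\NEL(\Sigma) \leq d_1 + 1 \leq \ell$.

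It remains to treat the edge case $d_1 = \ell$, i.e., a single experiment provides no algebraic information about $\bar\mu$. The plan here is to show that $d_r = \ell$ for every $r$, whence $\NEL(\Sigma) = 1$. Set $F_i := \C\langle \bar y_i^*, \bar u_i^*\rangle$, $F := F_1 \cdots F_r$, and $a := \trdeg_{\C(\bar\mu)} \C(\bar\mu)\langle \bar y_1^*, \bar u_1^*\rangle$. The two towers $\C \subset F \subset F(\bar\mu)$ and $\C \subset \C(\bar\mu) \subset F(\bar\mu)$ yield
\[
\trdeg_\C F + d_r \;=\; \trdeg_\C F(\bar\mu) \;=\; \ell + \trdeg_{\C(\bar\mu)} F(\bar\mu) \;=\; \ell + ra,
\]
where the last equality uses that in $\Sigma_r$ the tuples $(\bar y_i^*, \bar u_i^*)_{1 \leq i \leq r}$ are mutually algebraically independent over $\C(\bar\mu)$. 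Combined with the compositum bound $\trdeg_\C F \leq \sum_{i=1}^r \trdeg_\C F_i = ra$ (the second equality rewrites each $\trdeg_\C F_i = \ell + a - d_1 = a$ using $d_1 = \ell$), this forces $d_r \geq \ell$, and hence $d_r = \ell$.

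The main (small) technical point is the independence equality $\trdeg_{\C(\bar\mu)} F(\bar\mu) = ra$; it follows from the construction of $\Sigma_r$, whose generic solution introduces fresh and mutually independent copies of the state and input variables over $\C(\bar\mu)$.
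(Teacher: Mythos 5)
Your overall plan is sound, and your main case is essentially the paper's argument: $\df(\Sigma_r)\leqslant\ell$ for every $r$, the sequence is nonincreasing and strictly decreasing until its first repetition, which by Theorem~\ref{thm:num_exp}(1) is $\NEL(\Sigma)$, and part~(2) then bounds $\NEG(\Sigma)$. The paper, however, avoids your case split entirely: in the proof of Theorem~\ref{thm:num_exp} the relevant sequence is $\trdeg_{\C\langle\bar a_1,\ldots,\bar a_i\rangle}\C(\bar\mu)$ for $i=0,1,\ldots,\ell+1$, whose \emph{zeroth} term is $\trdeg_{\C}\C(\bar\mu)=\ell$. A nonincreasing sequence of $\ell+2$ integers taking values in $\{0,\ldots,\ell\}$ must have two consecutive equal terms at some index $r\leqslant\ell$, so the bound falls out with no separate treatment of $\df(\Sigma_1)=\ell$ (in that situation the paper's convention, visible in Algorithm~\ref{alg:num_exp} with $d_0:=\ell$, simply returns $\NEL(\Sigma)=0$).

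The genuine gap is in your edge case. The fact you want ($\df(\Sigma_1)=\ell$ forces $\df(\Sigma_r)=\ell$ for all $r$) is true, but your counting argument manipulates absolute transcendence degrees over $\C$ that are infinite whenever the model has inputs: since $I_\Sigma$ imposes no relations on $\bar u$, a generic $\bar u_1^\ast$ is differentially transcendental over $\C(\bar\mu)$, so $a=\trdeg_{\C(\bar\mu)}\C(\bar\mu)\langle\bar y_1^\ast,\bar u_1^\ast\rangle=\infty$, and likewise $\trdeg_{\C}F_i$ and $\trdeg_\C F$ are infinite. Your displayed identity then reads $\infty+d_r=\infty=\ell+\infty$ and the compositum bound reads $\infty\leqslant\infty$; neither yields $d_r\geqslant\ell$. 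Models with inputs are squarely within the class~\eqref{eq:sigma} and among the paper's examples, so this is not a corner case. The computation is salvageable: run it on arbitrary finite subtuples $\bar b_i$ of $F_i$, for which all the relevant transcendence degrees are finite. Then $d_1=\ell$ gives $\trdeg_{\C(\bar\mu)}\bar b_i=\trdeg_\C\bar b_i$ (symmetry of algebraic disjointness), independence of the experiments gives $\trdeg_{\C(\bar\mu)}(\bar b_1,\ldots,\bar b_r)=\sum_i\trdeg_{\C(\bar\mu)}\bar b_i$, and comparing the two towers for $\C(\bar\mu,\bar b_1,\ldots,\bar b_r)$ forces $\trdeg_{\C(\bar b_1,\ldots,\bar b_r)}\bar\mu=\ell$; taking the supremum over finite subtuples gives $d_r=\ell$. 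Alternatively, adopt the paper's index-$0$ term and drop the case split altogether.
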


\begin{conclusion}\label{thm:example}
  There is a system of the form~\eqref{eq:sigma} with a single output such that the fields of identifiable and multi-experiment identifiable functions do not coincide.
\end{conclusion}

Proofs of Theorem~\ref{thm:num_exp} and Conclusion~\ref{thm:example} are presented in Sections~\ref{sec:proofs} and~\ref{sec:counterexample}, respectively.

On the algorithmic side, Theorem~\ref{thm:num_exp} yields a probabilistic algorithm for computing the value $\NEL(\Sigma)$ and a bound for $\NEG(\Sigma)$ that
is off at most by one with arithmetic complexity being polynomial in the complexity of the system (Proposition~\ref{prop:complexity}).
We implemented this algorithm, and we demonstrate its practical performance and apply it to examples in Section~\ref{sec:alg}.

%%%%%%%%%%%%%%%%%%%%%%%%%%%%%%%%%%%%%%%%%%%
\section{Single-output model requiring more than one experiment}\label{sec:counterexample}

In this section, we will prove Conclusion~\ref{thm:example} by showing that the SE-identifiable and ME-identifiable functions do not coincide for the following model $\Sigma$:
\begin{equation}\label{eq:counterexsys}
    \begin{cases}
           x_1' = 0,\\
           x_2' = x_1x_2 + \mu_1 x_1 + \mu_2,\\
           y = x_2.
    \end{cases}
\end{equation}
We will  now
give a direct algebraic proof. In Section~\ref{sec:cemodeltheory}, we present the model-theoretic argument that has been used to construct this example and can be used to construct more complex ones.

\begin{lemma}
  The field of ME-identifiable function of~\eqref{eq:counterexsys} is $\C(\mu_1, \mu_2)$ but neither 
  $\mu_1$ 
   nor 
  $\mu_2$ 
  is
  SE-identifiable.
\end{lemma}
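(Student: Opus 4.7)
The plan is to work with a generic solution of~\eqref{eq:counterexsys} explicitly and identify $\C\langle y^\ast\rangle$ by iterating derivatives of the output. Starting from $y = x_2$, differentiating once gives $y' = x_1 y + \mu_1 x_1 + \mu_2$, and differentiating again (using $x_1' = 0$) yields $y'' = x_1 y'$. In the generic solution this gives
\[
x_1^\ast = y^{\ast\prime\prime}/y^{\ast\prime} \quad\text{and}\quad \mu_1 x_1^\ast + \mu_2 = y^{\ast\prime} - x_1^\ast y^\ast,
\]
and a trivial induction shows $(y^\ast)^{(k)} = (x_1^\ast)^{k-1}\, y^{\ast\prime}$ for all $k \geqslant 2$. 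From this I would conclude $\C\langle y^\ast\rangle = \C(x_1^\ast,\, y^\ast,\, \mu_1 x_1^\ast + \mu_2)$, a subfield of transcendence degree $3$ of the ambient $\C(\bar\mu)\langle y^\ast\rangle = \C(\mu_1, \mu_2, x_1^\ast, y^\ast)$, which has transcendence degree $4$ (so $\df(\Sigma)=1$).

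For the failure of SE-identifiability, the plan is a field-automorphism argument. Since $\mu_1, \mu_2, x_1^\ast, y^\ast$ are algebraically independent over $\C$, the assignment
\[
\mu_1 \mapsto \mu_1 + 1,\quad \mu_2 \mapsto \mu_2 - x_1^\ast,\quad x_1^\ast \mapsto x_1^\ast,\quad y^\ast \mapsto y^\ast
\]
extends uniquely to a $\C$-automorphism $\sigma$ of $\C(\mu_1, \mu_2, x_1^\ast, y^\ast)$. A direct check shows $\sigma$ fixes $\mu_1 x_1^\ast + \mu_2$, hence it fixes $\C\langle y^\ast\rangle$ pointwise and therefore its algebraic closure inside the ambient field. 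Since $\sigma$ moves both $\mu_1$ and $\mu_2$, neither is algebraic over $\C\langle y^\ast\rangle$, ruling out even local SE-identifiability.

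Finally, to establish that both parameters are globally ME-identifiable, I would pass to $\Sigma_2$. A generic solution of $\Sigma_2$ has $x_{1,1}^\ast, x_{1,2}^\ast$ algebraically independent transcendental constants, and applying the single-experiment computation to each index gives $x_{1,j}^\ast = y_j^{\ast\prime\prime}/y_j^{\ast\prime}$ and $\mu_1 x_{1,j}^\ast + \mu_2 = y_j^{\ast\prime} - x_{1,j}^\ast y_j^\ast$ as explicit elements of $\C\langle y_1^\ast, y_2^\ast\rangle$ for $j = 1,2$. The resulting $2\times 2$ linear system in $\mu_1, \mu_2$ has determinant $x_{1,1}^\ast - x_{1,2}^\ast \neq 0$, so $\mu_1$ and $\mu_2$ can be solved for rationally, placing them in $\C\langle y_1^\ast, y_2^\ast\rangle$. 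Hence both are globally SE-identifiable for $\Sigma_2$, and the field of ME-identifiable functions of $\Sigma$ is all of $\C(\mu_1, \mu_2)$. The one step requiring a bit of care is the first --- pinning down $\C\langle y^\ast\rangle$ exactly as the three-generator field above; the non-identifiability and two-experiment identifiability then reduce to routine field-automorphism and linear-algebra manipulations.
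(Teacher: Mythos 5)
Your proof is correct in substance and takes a more explicit route than the paper's. For non-SE-identifiability, the paper uses essentially the same automorphism ($\mu_1\mapsto\mu_1+1$, $\mu_2\mapsto\mu_2-x_1$), but applies it to $\C(\bar\mu)\langle x_1^\ast,x_2^\ast,y^\ast\rangle$ without first computing $\C\langle y^\ast\rangle$; you instead pin down $\C\langle y^\ast\rangle=\C(x_1^\ast,y^\ast,\mu_1x_1^\ast+\mu_2)$ explicitly, which makes the argument self-contained (and in fact your transcendence-degree count already shows $\mu_1,\mu_2$ are transcendental over $\C\langle y^\ast\rangle$, so the automorphism is not even needed for the local claim). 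For ME-identifiability, the paper derives the input--output equation $x_2x_2''-(x_2')^2+\mu_1x_2''+\mu_2x_2'=0$ and invokes \cite[Theorem~19]{allident} to conclude that its coefficients are ME-identifiable; you instead pass to $\Sigma_2$ and solve the $2\times2$ linear system with determinant $x_{1,1}^\ast-x_{1,2}^\ast\neq 0$. Your route avoids the external theorem and additionally yields the explicit bound of two experiments (consistent with Example~\ref{ex:counter}), at the cost of having to justify that the two replicas' values $x_{1,1}^\ast, x_{1,2}^\ast$ are independent, which follows since $I_{\Sigma_2}$ is generated by two disjoint copies of $I_\Sigma$.

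One small slip: from ``$\sigma$ fixes $\C\langle y^\ast\rangle$ pointwise and moves $\mu_1$'' you cannot directly conclude that $\mu_1$ is not \emph{algebraic} over $\C\langle y^\ast\rangle$ --- an automorphism fixing a field pointwise may still permute conjugates of an algebraic element (complex conjugation over $\mathbb{R}$ moves $i$). To rule out local SE-identifiability you should either iterate $\sigma$ (the infinite orbit $\mu_1+k$ contradicts finiteness of the set of conjugates) or, more simply, observe from your own computation that $\trdeg_\C\C(x_1^\ast,y^\ast,\mu_1x_1^\ast+\mu_2,\mu_1)=4>3$. Note the paper's proof only establishes $\mu_i\notin\C\langle y^\ast\rangle$, i.e.\ the global statement, so your (repaired) argument is actually slightly stronger.
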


\begin{proof}
 We 
  find the field of ME-identifiable function using~\cite[Theorem~19]{allident}.
  Differentiating the second equation in~\eqref{eq:counterexsys}, we get $x_2'' = x_1x_2'$.
  Using this equation, we can eliminate $x_1$ from the second equation of~\eqref{eq:counterexsys} and obtain:
  \begin{equation}\label{eq:counter_input_output}
  x_2x_2'' - (x_2')^2 + \mu_1x_2'' + \mu_2 x_2' = 0.
  \end{equation}
  Since $x_2$ does not satisfy any first order equation over $\C(\mu_1, \mu_2)$ modulo~$I_\Sigma$ and \eqref{eq:counter_input_output} is irreducible,  the set consisting of  \eqref{eq:counter_input_output} is a set of input-output equations for $\Sigma$, so~\cite[Theorem~19]{allident} implies that the coefficients $\mu_1$ and $\mu_2$  of \eqref{eq:counter_input_output}
are ME-indentifiable. 
  
  To prove that $\mu_1$ and $\mu_2$ are not SE-identifiable, consider a generic solution $(x_1^\ast, x_2^\ast, y^\ast)$ of~\eqref{eq:counterexsys}.
  Then there is a differential automorphism of $\C(\bar{\mu})\langle x_1^\ast, x_2^\ast, y^\ast\rangle$ defined by
  \[
  \alpha|_{\C\langle x_1^\ast, x_2^\ast, y^\ast\rangle} = \operatorname{id}, \quad \alpha(\mu_1) = \mu_1 + 1, \quad \alpha(\mu_2) = \mu_2 - x_1.
  \]
  Therefore, neither of $\mu_1$ or $\mu_2$ belongs to $\C\langle y^\ast \rangle$.
\end{proof}

\begin{remark}
  Using~\cite[Algorithm~1]{allident}, one can show that the field of SE-identifiable functions of~\eqref{eq:counterexsys} is $\C$.
\end{remark}

%%%%%%%%%%%%%%%%%%%%%%%%%%%%%%%%%%%%%%%%%%%%%%%%%%%%%%%%%%%%

\section{Bounding the number of experiments (proof of Theorem~\ref{thm:num_exp})}\label{sec:proofs}

In this subsection, we will give an algebraic proof (but with a strong model theoretic flavor, which we expand in Section~\ref{sec:mtpf}) of Theorem~\ref{thm:num_exp}.
We start with fixing some notation for the subsection.

\begin{notation}\label{not:tuples}
\begin{itemize}
    \item[]
    \item $\bar{a}, \bar{b}$, and $\bar{c}$ denote tuples of elements of differential fields.
    \item $\bar{x}, \bar{y}$, and $\bar{z}$ denote tuples of differential indeterminates. 
Moreover, we will assume that $|\bar{a}| = |\bar{x}|$, $|\bar{b}| = |\bar{y}|$, and $|\bar{c}| = |\bar{z}|$.
     
    \item $k_0$ will be a fixed differential ground  field (in applications, $k_0 = \mathbb{Q}, \mathbb{R}, \mathbb{C}$ with zero derivation).
    We will also consider an extension $K \supset k_0$ such that $K$ is differentially closed and $|k_0|$-saturated field.
    Saturation and differentially closed fields are defined in Section~\ref{sec:model_theory}, in this section we will use only the following algebraic consequence of these properties~\cite[Propositions~4.2.13 and~4.3.3 and page~117]{Marker_book}: for every subfield $k \subset K$ of cardinality at most $|k_0|$ and every differential automorphism  $\alpha$ of $k$, $\alpha$ can be extended to an endomorphism of $K$.

    \item Let $E$ be a field and $\bar{a}$ be a tuple of elements from some extension of $E$.
    Then $\trdeg_E \bar{a}$ denotes $\trdeg_E E(\bar{a})$.
\end{itemize}
\end{notation}

\begin{notation}\label{not:fields}
   Let $k \subset K$ be an intermediate differential field (in applications, we will have $k = \C(\bar{\mu})$)
   and $\bar{a}$ a tuple from $K$.
  \begin{itemize}
      \item 
      \emph{The vanishing ideal of $\bar{a}$ over $k$} is denoted by
      \[
        \I_k(\bar{a}) := \{ p \in k\{\bar{x}\} \mid p(\bar{a}) = 0 \}.
      \]
      \item We denote \emph{the differential-algebraic variety of $\bar{a}$ with respect to $K$ defined over $k$} by 
      \[
        \V_{K/k}(\bar{a}) = \{ \bar{b} \in K \mid p(\bar{b}) = 0 \quad \forall p \in \I_k(\bar{a}) \} \subset K^{|\bar{a}|}.
      \]
     We consider this as a differential-algebraic variety over $K$, and it is not necessarily irreducible.
     For brevity, until the end of the section, by ``variety'' we will mean ``differential-algebraic variety''.
      \item For a tuple $\bar{a}$ from $K$, $\FD_{k}(\bar{a})$ denotes the field generated by $k_0$ and the field of definition of $\mathcal{I}_k(\bar{a})$  (cf. Example~\ref{ex:ident_me}).
      
      \item 
      Let $\langle \mathcal{I}_k(\bar{a}), \mathcal{I}_k(\bar{b}) \rangle$  denote the ideal in $k\{\bar{x}, \bar{y}\}$ generated 
      by
      $\mathcal{I}_k(\bar{a}) \subset k\{\bar{x}\}$ and $\mathcal{I}_k(\bar{b}) \subset k\{\bar{y}\}$.
      For tuples $\bar{a}_1, \ldots, \bar{a}_n$, the ideal $\langle \mathcal{I}_k(\bar{a}_1), \ldots, \mathcal{I}_k(\bar{a}_n) \rangle$ is defined analogously.
      
      \item For tuples $\bar{a}_1$ and $\bar{a}_2$ of the same length, we write $\I_k(\bar{a}_1) \cong \I_k(\bar{a}_2)$ if the ideals $\I_k(\bar{a}_1)$ and $\I_k(\bar{a}_2)$ coincide if being considering in the same ring $k\{\bar{x}\}$.
      
  \end{itemize}
\end{notation}

%%%%%%%%%%%%%%%%%%%%%%%%%%%%%%%%%%%%%%%%%%%%%%%%%%%%%%%

\begin{lemma}\label{lem:independence}
  Let $k \subset K$ be a differential subfield and $\bar{a}$ and $\bar{b}$  tuples from $K$ 
  such that $\I_k(\bar{a}, \bar{b}) = \langle \I_k(\bar{a}), \I_k(\bar{b}) \rangle$.
  Then $\I_{k\langle \bar{b} \rangle} (\bar{a})$ is generated by $\I_k(\bar{a})$. 
\end{lemma}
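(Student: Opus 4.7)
The plan is to prove the lemma by a direct computation, using only the decomposition hypothesis on $\I_k(\bar a,\bar b)$. One inclusion is essentially free: every element of $\I_k(\bar a)\subset k\{\bar x\}$ lies in $k\langle\bar b\rangle\{\bar x\}$ and vanishes at $\bar a$, so the ideal generated by $\I_k(\bar a)$ in $k\langle\bar b\rangle\{\bar x\}$ is contained in $\I_{k\langle\bar b\rangle}(\bar a)$. The whole content is the reverse inclusion.

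To establish that, I would start with an arbitrary $p(\bar x)\in \I_{k\langle\bar b\rangle}(\bar a)$. Since the coefficients of $p$ are rational expressions in $\bar b$ and its derivatives with coefficients in $k$, I can clear denominators and reduce to showing the following: if $q(\bar y,\bar x)\in k\{\bar y,\bar x\}$ is such that $q(\bar b,\bar a)=0$, then $q(\bar b,\bar x)$ lies in the ideal generated by $\I_k(\bar a)$ in $k\langle\bar b\rangle\{\bar x\}$. This is where the hypothesis enters: $q$ vanishes at $(\bar a,\bar b)$, hence $q\in \I_k(\bar a,\bar b)=\langle \I_k(\bar a),\I_k(\bar b)\rangle$, so one can write
\[
q(\bar y,\bar x)=\sum_i p_i(\bar y,\bar x)\,f_i(\bar x)+\sum_j q_j(\bar y,\bar x)\,g_j(\bar y),
\]
with $f_i\in\I_k(\bar a)$ and $g_j\in\I_k(\bar b)$. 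Substituting $\bar y=\bar b$ kills the second sum because $g_j(\bar b)=0$, leaving $q(\bar b,\bar x)=\sum_i p_i(\bar b,\bar x)f_i(\bar x)$, which is visibly in the ideal generated by $\I_k(\bar a)$ over $k\langle\bar b\rangle\{\bar x\}$.

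The argument is short because the hypothesis $\I_k(\bar a,\bar b)=\langle\I_k(\bar a),\I_k(\bar b)\rangle$ is exactly the algebraic expression of independence needed to separate the $\bar x$- and $\bar y$-sides. The only genuinely delicate point is the reduction from an element of $k\langle\bar b\rangle\{\bar x\}$ to an element of the form $q(\bar b,\bar x)$ with $q\in k\{\bar y,\bar x\}$; one must be careful that clearing a denominator $r(\bar b)$ with $r(\bar b)\neq 0$ does not alter membership in $\I_{k\langle\bar b\rangle}(\bar a)$, which is immediate since $r(\bar b)$ is a unit in $k\langle\bar b\rangle$. I do not anticipate any real obstacle beyond bookkeeping on this reduction; the lemma is essentially the standard ``tensor product of vanishing ideals'' statement in a differential setting, and the hypothesis is precisely tailored to it.
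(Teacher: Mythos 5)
Your argument is correct and follows essentially the same route as the paper's proof: clear denominators to reduce to a polynomial $q(\bar y,\bar x)\in\I_k(\bar a,\bar b)$, decompose it using the hypothesis, and substitute $\bar y=\bar b$ to kill the $\I_k(\bar b)$ terms. The only difference is that you spell out the easy inclusion and the unit-denominator point explicitly, which the paper leaves implicit.
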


\begin{proof}
  By clearing denominators, every element $p \in \I_{k\langle \bar{b} \rangle} (\bar{a})$ can be written as $p = 
  q(\bar{x}, \bar{b})/d$, where $d \in k\langle \bar{b} \rangle$ and $q(\bar{x}, \bar{y}) \in \I_k(\bar{a}, \bar{b})$.
The differential  polynomial $q(\bar{x}, \bar{y})$ can be written as a combination of elements of $\I_k(\bar{a})$ and $\I_k(\bar{b})$.
  If we plug $\bar{y} = \bar{b}$, the terms from $\I_k(\bar{b})$ will vanish, so $q(\bar{x}, \bar{b})$ can be written as a combination of elements of $\I_k(\bar{a})$.
  Then the same is true for $p$.
\end{proof}

\begin{proposition}[{cf.~\cite[Theorem~19]{allident}}]\label{prop:existence_N}
  Let $k \subset K$ be a differential subfield and $\bar{a}_1, \bar{a}_2, \ldots$ be tuples of the same length from $K$ with
  \[
    \I_k(\bar{a}_1) \cong \I_k(\bar{a}_2) \cong \ldots \quad \text{and} \quad \I_k(\bar{a}_1, \ldots, \bar{a}_\ell) = \langle \I_k(\bar{a}_1), \ldots, \I_k(\bar{a}_\ell) \rangle \text{ for every }\ell \geqslant 1.
  \]
  Then there exists $N$ such that (see Notation~\ref{not:fields})
  \[
    \FD_k(\bar{a}_1) \subset k_0\langle \bar{a}_1, \ldots, \bar{a}_N \rangle.
  \]
\end{proposition}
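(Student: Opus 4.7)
The plan is to mimic, in algebraic language, the stability-theoretic principle that the canonical base of a type lies in the definable closure of a sufficiently long Morley sequence; the argument splits into a finite-generation reduction followed by a saturation-plus-dimension contradiction. First I will establish that $F := \FD_k(\bar{a}_1)$ is finitely generated as a differential field over $k_0$. Applying the Ritt--Raudenbush basis theorem to the radical differential ideal $\mathcal{J} := \I_k(\bar{a}_1) \cap F\{\bar{x}\}$ produces a finite generating set $p_1, \ldots, p_s \in F\{\bar{x}\}$; the finitely many coefficients of the $p_j$'s span $F$ over $k_0$ as a differential field, because any smaller differential subfield containing them would already define $\I_k(\bar{a}_1)$ in $k\{\bar{x}\}$, contradicting the minimality of $F$. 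It therefore suffices to produce, for each such coefficient $c$, an integer $\ell_c$ with $c \in L_{\ell_c} := k_0\langle \bar{a}_1, \ldots, \bar{a}_{\ell_c}\rangle$, and then take $N$ to be the maximum over the finitely many coefficients.

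For a fixed $c \in F$ I will argue by contradiction: assume $c \notin L_\ell$ for any $\ell$, so $c \notin L_\infty := \bigcup_\ell L_\ell$. Since $K$ is differentially closed and $|k_0|$-saturated, the automorphism-extension consequence of saturation recorded in Notation~\ref{not:tuples} furnishes a differential endomorphism $\sigma$ of $K$ that fixes $L_\infty$ pointwise yet satisfies $\sigma(c) \neq c$. Because $c$ belongs to the field of definition of the $K$-ideal $I := \I_k(\bar{a}_1) \cdot K\{\bar{x}\}$, the image $\sigma(I)$ is a distinct $K$-ideal, so the differential-algebraic varieties $V := \V_K(I)$ and $\sigma(V) = \V_K(\sigma(I))$ differ. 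Yet $\sigma$ fixes every $\bar{a}_i$, so each $\bar{a}_i$ lies in $V \cap \sigma(V) =: W$, a proper subvariety of $V$.

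The hardest step will be turning this set-theoretic overlap into a quantitative contradiction. The independence hypothesis forces $(\bar{a}_1, \ldots, \bar{a}_r)$ to be a generic point of $V^r$ over $k$, so its Kolchin polynomial equals $r \cdot \omega_V$; but all $\bar{a}_i$ also lie in $W$, so the Kolchin polynomial of $(\bar{a}_1, \ldots, \bar{a}_r)$ over $k \cdot \sigma(k)$ is at most $r \cdot \omega_W$, with $\omega_W$ strictly smaller than $\omega_V$ because $W \subsetneq V$. Arranging $\sigma$ to have finite-dimensional support over $L_\infty$ so that the Kolchin polynomial of $k \cdot \sigma(k)/k$ is bounded, taking $r$ large will make the deficit $r(\omega_V - \omega_W)$ exceed this bound, contradicting the subadditivity of Kolchin polynomials across the chain $k \subset k \cdot \sigma(k) \subset k \cdot \sigma(k)\langle \bar{a}_1, \ldots, \bar{a}_r\rangle$. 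The genuine technical obstacle is precisely arranging such a $\sigma$ to have finite-dimensional support while still moving $c$, and carrying out the Kolchin-polynomial bookkeeping without conflating the several base extensions involved.
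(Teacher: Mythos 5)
There is a genuine gap, and you have in fact flagged it yourself: the entire argument hinges on ``arranging $\sigma$ to have finite-dimensional support,'' and this is not a technicality you can defer --- it is the point where the proof fails as written. The element $c$ you want to move lies in $\FD_k(\bar{a}_1)$, which (up to the compositum with $k_0$) sits inside $k$; so any endomorphism $\sigma$ with $\sigma(c)\neq c$ necessarily moves $k$, and the saturation property recorded in Notation~\ref{not:tuples} gives you no control whatsoever over $\sigma(k)$ beyond its restriction to the small subfield $L_\infty\langle c\rangle$ you prescribed. Since the proposition imposes no finiteness hypothesis on $k$, the quantity $\trdeg_k\bigl(k\cdot\sigma(k)\bigr)$ --- or more precisely the Kolchin polynomial of whatever field $W$ is defined over, relative to $k$ --- is a priori unbounded, and then the deficit $r(\omega_V-\omega_W)$ has nothing finite to be compared against, no matter how large you take $r$. (A repair is conceivable: $W$ need only be defined over $k\cdot\sigma(F)$ with $F$ finitely generated over $k_0$ and $\sigma$ fixing $k_0$, so $\trdeg_k k\cdot\sigma(F)<\infty$; but that is an argument you would have to make, and you have not made it.) There are two further unproved steps: $\sigma(I)\neq I$ does not by itself yield $V\cap\mathcal{V}_K(\sigma(I))\subsetneq V$ --- you need $\sigma(I)\not\subseteq\I_K(V)$, i.e.\ that $\sigma(V)$ does not simply \emph{contain} $V$; and extracting an endomorphism that fixes $L_\infty$ pointwise while moving $c$ from the automorphism-extension property requires first producing a second realization of $\tp(c/L_\infty)$ and an $L_\infty$-isomorphism between the two, which the mere non-membership $c\notin L_\infty$ does not immediately supply.

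For comparison, the paper's proof avoids all of this global machinery. It takes a linear basis of $J=\I_k(\bar{a}_1)$ in reduced row echelon form, so each generator is $f_\lambda=m_0+c_1m_1+\cdots+c_Nm_N$ with the $c_i$ generating the field of definition; evaluating the monomials at $\bar{a}_1,\ldots,\bar{a}_N$ produces an $N\times N$ linear system for $(c_1,\ldots,c_N)$, and a vanishing (minimal) minor would, upon expansion, yield a nonzero element of $\I_E(\bar{a}_s)$ with support properly contained in that of $f_\lambda$ --- contradicting Lemma~\ref{lem:independence}, which says $\I_E(\bar{a}_s)$ is generated by $\I_k(\bar{a}_s)$. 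Cramer's rule then places the $c_i$ in $k_0\langle\bar{a}_1,\ldots,\bar{a}_N\rangle$ with an explicit $N$ (the number of monomials in $f_\lambda$), and finite generation of the field of definition finishes the proof. Your finite-generation reduction in the first paragraph is sound and parallels the paper's appeal to \cite[Proposition~4.11]{OPT19}, but the contradiction argument that follows would need to be replaced or substantially completed.
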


\begin{proof}
  Let $L = k_0\langle \bar{c} \rangle$ be generated by the field of definition of $J := \I_k(\bar{a}_1)$ and  by $k_0$.
  We consider an arbitrary ordering of the monomials of the corresponding differential ring, and consider a linear basis of $J$ that is in the reduced row echelon form with respect to this ordering (this construction is described in more details in the proof of~\cite[Theorem~4.7]{ident-compare}).
  The elements of this basis form a set of generators $\{f_\lambda\}_{\lambda \in \Lambda}$ of $J$ such that, for every $\lambda \in \Lambda$,
  \begin{itemize}
      \item the coefficients of $f_\lambda$ are in $L$ and at least one of them is $1$;
      \item for every $g \in J \setminus \{f_\lambda\}$, the support of $f_\lambda - g$ is not a proper subset of the support of $f_\lambda$.
  \end{itemize}
  
  Then the coefficients of $\{f_{\lambda}\}_{\lambda \in \Lambda}$ generate $L$ over $k_0$.
  We fix some $\lambda$ and write $f_{\lambda} = m_0 + c_1m_1 + \ldots + c_Nm_N$, where $m_1, \ldots, m_N$ are differential monomials and $c_1, \ldots, c_N \in L$.
  We will show that $c_1, \ldots, c_N \in \Q\langle \bar{a}_1, \ldots, \bar{a}_N\rangle$.
  For every $j \geqslant 1$, we denote the monomial $m_i$ evaluated at $\bar{a}_j$ by $m_{j, i}$.
  Then we have a linear system in $c_1, \ldots, c_N$
  \[
  \begin{pmatrix}
    m_{1, 1} & \ldots & m_{1, N} \\
    \vdots & \ddots & \vdots \\
    m_{N, 1} & \ldots & m_{N, N}
  \end{pmatrix}
  \begin{pmatrix}
    c_1\\
    \vdots\\
    c_N
  \end{pmatrix}
  =
  \begin{pmatrix}
    -m_{1, 0}\\
    \vdots\\
    -m_{N, 0}
  \end{pmatrix}.
  \]
  We denote the matrix of the above system by $S$.
  If $\det S \neq 0$, then, solving the system, 
  we show that $c_1, \ldots, c_N \in \Q\langle \bar{a}_1, \ldots, \bar{a}_N \rangle$.
  Assume that $\det S = 0$ and
  let $S_0$ be the smallest singular minor of $S$.
  Let the index of the first row of $S_0$ be $s$, and define $E := k\langle \bar{a}_1, \ldots, \bar{a}_{s - 1}, \bar{a}_{s + 1}, \ldots, \bar{a}_N\rangle$.
  Expanding the equality $\det S_0 = 0$ with respect to the first row, we obtain an element $p$ of $\I_{E}(\bar{a}_s)$.
  The minimality of $S_0$ implies that $p \neq 0$.
  Since the support of $p$ is a proper subset of the support of $f_{\lambda}$, we conclude that $\I_{E}(\bar{a}_s)$ is not generated by $\I_{k}(\bar{a}_s)$.
  This contradicts Lemma~\ref{lem:independence}.
  
  Since $L$ is finitely generated (follows, for example, from~\cite[Proposition~4.11]{OPT19}), the coefficients of finitely many $f_{\lambda}$'s generate $L$ over $k$.
  Taking the maximum of the corresponding $N$'s will give the desired $N$ and finish the proof.
\end{proof}

%%%%%%%%%%%%%%%%%%%%%%%%%%%%%%%%%%%%%%%%%%%%%%%%%%%%%%%%

\begin{lemma}\label{lem:components_are_generic}
  Let $k$ with $k_0 \subset k \subset K$ be a differential field differentially finitely generated over $k_0$ and $\bar{a}$ a tuple from $K$.
  Then, for every component $C$ of $\V_{K/k}(\bar{a})$, there exists $\bar{b} \in C$ such that $\I_{k_0}(\bar{a}) \cong \I_{k_0}(\bar{b})$.
\end{lemma}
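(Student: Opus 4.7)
The plan is to identify the irreducible components of $\V_{K/k}(\bar{a})$ with the minimal prime differential ideals $P_1,\ldots,P_s$ of $K\{\bar{x}\}$ containing $\I_k(\bar{a})\cdot K\{\bar{x}\}$, locate $\bar{a}$ in some component $C_1 = V(P_1)$, and then, for any prescribed target component $C = V(P_i)$, build a differential endomorphism $\sigma$ of $K$ that fixes $k$ pointwise and satisfies $\sigma(P_1) = P_i$. The desired tuple is then $\bar{b} := \sigma(\bar{a})$, which automatically lies in $C$ and satisfies $\I_{k_0}(\bar{b}) = \I_{k_0}(\bar{a})$ because $\sigma$ fixes $k_0$.

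First I would verify that each minimal prime $P_i$ contracts correctly: since $k\{\bar{x}\} \hookrightarrow K\{\bar{x}\}$ is flat (being the base change along the field extension $k \subset K$), going-down yields a prime $Q \subset P_i$ with $Q \cap k\{\bar{x}\} = \I_k(\bar{a})$, and minimality of $P_i$ forces $Q = P_i$; intersecting further with $k_0\{\bar{x}\}$ gives $P_i \cap k_0\{\bar{x}\} = \I_{k_0}(\bar{a})$. Next, to manufacture $\sigma$, I would choose a differential subfield $L \subset K$ of cardinality at most $|k_0|$ containing $k$ together with fields of definition of all the $P_j$, so that each $P_j$ descends to a prime $\tilde{P}_j = P_j \cap L\{\bar{x}\}$. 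Using that $\I_k(\bar{a})$ is prime in $k\{\bar{x}\}$, I would then argue that $\operatorname{Aut}(L/k)$ permutes $\{\tilde{P}_1,\ldots,\tilde{P}_s\}$ transitively: any proper stable subset would, by intersection, descend to a nontrivial decomposition of $\I_k(\bar{a})$, contradicting primality. Picking $\alpha \in \operatorname{Aut}(L/k)$ with $\alpha(\tilde{P}_1) = \tilde{P}_i$ and invoking the automorphism-extension property of $K$ recorded in Notation~\ref{not:tuples} produces the required $\sigma \in \operatorname{End}(K)$.

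The subtle point, and the step I expect to require the most care, is organizing the Galois-theoretic descent cleanly in the differential-algebraic setting: one needs to choose $L$ large enough that it plays the role of a ``Galois'' extension of $k$ (so $\operatorname{Aut}(L/k)$-stable ideals descend to $k\{\bar{x}\}$), and small enough that the saturation hypothesis of Notation~\ref{not:tuples} still applies. A cleaner model-theoretic reformulation would bypass the explicit Galois argument by directly realizing the partial type asserting $\bar{x} \in C$ together with $\tp(\bar{x}/k_0) = \tp(\bar{a}/k_0)$ inside $K$ via saturation, once one has verified that $P$ admits a field of definition of cardinality at most $|k_0|$; but both routes rely essentially on the saturation hypothesis on $K$.
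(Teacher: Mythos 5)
Your proposal is correct and follows essentially the same route as the paper's proof: the paper likewise observes that, $\I_k(\bar{a})$ being prime, the Galois group of $k^{\alg}/k$ acts transitively on the components of $\V_{K/k}(\bar{a})$, lifts an automorphism carrying the component of $\bar{a}$ to $C$ to a differential endomorphism of $K$ via the saturation property recorded in Notation~\ref{not:tuples}, and takes $\bar{b}$ to be the image of $\bar{a}$. Your auxiliary field $L$ plays exactly the role of $k^{\alg}$ (the components are defined over $k^{\alg}$, and $|k^{\alg}|=|k_0|$ since $k$ is differentially finitely generated over $k_0$), so the extra descent and going-down bookkeeping you supply is just a more explicit rendering of the same argument.
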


\begin{proof}
  The ideal $\I_k(\bar{a})$ is prime.
  Then the Galois group of $k^{\alg} \subset K$ over $k$ acts transitively on the components of $\V_{K/k}(\bar{a})$.
  Let $C_0$ be a component containing $\bar{a}$, and let $\alpha$ be an automorphism of $k^{\alg}$ over $k$ that maps $C_0$ to $C$.
  By Notation~\ref{not:tuples}, since $|k| = |k_0|$, $\alpha$ can be lifted to a differential endomorphism of $K$ which we will denote by $\alpha$ as well.
  We set $\bar{b} := \alpha(\bar{a})$.
  Then we have $\I_{k_0}(\bar{a}) \cong \I_{k_0}(\bar{b})$ due to the $\alpha$-invariance of $k_0$.
\end{proof}

%%%%%%%%%%%%%%%%%%%%%%%%%%%%%%%%%%%%%%%%%

\begin{lemma}\label{lem:almost_intersection}
  Let $k$ with $k_0 \subset k \subset K$ be a differential subfield.
  Let 
  \begin{itemize}
      \item $\I_k(\bar{a}, \bar{b}) = \langle\I_k(\bar{a}), \I_k(\bar{b}) \rangle$;
      \item $\bar{c}$ from $k$ be such that $\FD_k(\bar{a}) \cup \FD_k(\bar{b}) \subset k_0\langle \bar{c} \rangle$ (see Notation~\ref{not:fields});
      \item a component $C \subset \V_{K/k_0\langle \bar{a} \rangle}(\bar{c})$ be such that $C \subset \V_{K/k_0\langle \bar{b} \rangle}(\bar{c})$.
  \end{itemize}
  Then $C$ is a component of $\V_{K/k_0\langle \bar{a}, \bar{b} \rangle}(\bar{c})$.
\end{lemma}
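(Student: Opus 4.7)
First I would reduce the statement to showing the inclusion $C \subseteq V_{ab} := \V_{K/k_0\langle \bar{a}, \bar{b}\rangle}(\bar{c})$, after which the ``is a component'' conclusion is automatic. Indeed $V_{ab} \subseteq V_a := \V_{K/k_0\langle \bar{a}\rangle}(\bar{c})$ because enlarging the base field only enlarges the vanishing ideal. Hence any irreducible component $C'$ of $V_{ab}$ containing $C$ is an irreducible subvariety of $V_a$; since $C$ is already a maximal irreducible subvariety of $V_a$, this forces $C' = C$, so $C$ is itself a component of $V_{ab}$.

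For the main inclusion, fix $\bar{d} \in C$ and $p \in k_0\langle \bar{a}, \bar{b}\rangle\{\bar{z}\}$ with $p(\bar{c}) = 0$; the goal is $p(\bar{d}) = 0$. After clearing denominators in $\bar{a}, \bar{b}$, write $p(\bar{z}) = P(\bar{z}, \bar{a}, \bar{b})$ with $P \in k_0\{\bar{z}, \bar{x}, \bar{y}\}$, so $P(\bar{c}, \bar{x}, \bar{y}) \in \I_k(\bar{a}, \bar{b}) \cap k_0\langle \bar{c}\rangle\{\bar{x}, \bar{y}\}$. Setting $J_a := \I_k(\bar{a}) \cap k_0\langle \bar{c}\rangle\{\bar{x}\}$ and $J_b := \I_k(\bar{b}) \cap k_0\langle \bar{c}\rangle\{\bar{y}\}$, the key algebraic identity I need is
\[
\I_k(\bar{a}, \bar{b}) \cap k_0\langle \bar{c}\rangle\{\bar{x}, \bar{y}\} \;=\; \langle J_a, J_b\rangle_{k_0\langle \bar{c}\rangle\{\bar{x}, \bar{y}\}}.
\]
This combines all three hypotheses: the independence $\I_k(\bar{a}, \bar{b}) = \langle \I_k(\bar{a}), \I_k(\bar{b})\rangle$; the field-of-definition conditions, which give $\I_k(\bar{a}) = k\{\bar{x}\}\cdot J_a$ and $\I_k(\bar{b}) = k\{\bar{y}\}\cdot J_b$; and faithful flatness of $k\{\bar{x}, \bar{y}\}$ over $k_0\langle \bar{c}\rangle\{\bar{x}, \bar{y}\}$ (from faithful flatness of the field extension $k_0\langle \bar{c}\rangle \subseteq k$), which lets one contract extended ideals back. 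Hence I get an identity
\[
d(\bar{c})\,P(\bar{c}, \bar{x}, \bar{y}) \;=\; \sum_i F_i(\bar{c}, \bar{x}, \bar{y})\,G_i(\bar{c}, \bar{x}) + \sum_j F'_j(\bar{c}, \bar{x}, \bar{y})\,H_j(\bar{c}, \bar{y})
\]
with $F_i, F'_j, G_i, H_j$ over $k_0$, $d \in k_0\{\bar{z}\}$, $d(\bar{c}) \neq 0$, $G_i(\bar{c}, \bar{a}) = 0$ and $H_j(\bar{c}, \bar{b}) = 0$.

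Finally I would transfer this identity from $\bar{c}$ to $\bar{d}$. Replacing $\bar{c}$ by the formal variable $\bar{z}$, the difference of the two sides lies in $k_0\{\bar{z}, \bar{x}, \bar{y}\}$ with all of its $\bar{x}, \bar{y}$-coefficients in $\I_{k_0}(\bar{c})$. Since $\bar{d} \in C \subseteq V_a \subseteq \V_{K/k_0}(\bar{c})$, substituting $\bar{z} = \bar{d}$ still kills the difference, yielding the corresponding identity at $\bar{d}$. Evaluating at $\bar{x} = \bar{a}, \bar{y} = \bar{b}$: each $G_i(\bar{z}, \bar{a}) \in \I_{k_0\langle \bar{a}\rangle}(\bar{c})$ vanishes at $\bar{d} \in V_a$, and each $H_j(\bar{z}, \bar{b}) \in \I_{k_0\langle \bar{b}\rangle}(\bar{c})$ vanishes at $\bar{d} \in V_b$, so $d(\bar{d})\,P(\bar{d}, \bar{a}, \bar{b}) = 0$. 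The polynomial $d$ cannot vanish identically on $C$: otherwise, applying Lemma~\ref{lem:components_are_generic} to $C$ as a component of $V_a$ produces $\bar{c}^\ast \in C$ with $\I_{k_0}(\bar{c}^\ast) = \I_{k_0}(\bar{c})$, forcing $d(\bar{c}) = 0$, a contradiction. Thus $P(\bar{d}, \bar{a}, \bar{b})$ vanishes on a Zariski-dense subset of the irreducible $C$, and being a polynomial condition on $\bar{d}$ it then vanishes on all of $C$.

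The main obstacle I expect is cleanly establishing the intersection identity displayed above; this is where the three hypotheses really conspire, and it is essentially the algebraic counterpart of the canonical-base manipulation that the authors flag as the model-theoretic ingredient of the proof. Once that identity is in hand, the transfer from the basepoint $\bar{c}$ to a general $\bar{d} \in C$ is a nearly formal consequence of the component conditions $C \subseteq V_a \cap V_b$ and the fact that $\bar{d}$ also satisfies the $k_0$-relations at $\bar{c}$.
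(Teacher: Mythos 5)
Your proposal is correct and follows essentially the same route as the paper's proof: clear denominators, use the independence hypothesis together with the field-of-definition condition to split the relation over $k_0\langle \bar{c}\rangle$ into contributions from $\I_{k_0\langle\bar{c}\rangle}(\bar{a})$ and $\I_{k_0\langle\bar{c}\rangle}(\bar{b})$, transfer the resulting identity to points of $C \subset \V_{K/k_0\langle\bar{a}\rangle}(\bar{c}) \cap \V_{K/k_0\langle\bar{b}\rangle}(\bar{c})$, and rule out vanishing of the cleared denominator on $C$ via Lemma~\ref{lem:components_are_generic} and irreducibility. The only cosmetic difference is that you spell out the contraction step via faithful flatness, which the paper leaves implicit.
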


\begin{proof}
  Consider any $p \in \I_{k_0\langle \bar{a}, \bar{b} \rangle}(\bar{c}) \subset k_0\langle\bar{a}, \bar{b} \rangle \{\bar{z}\}$.
  Let $d \in k_0\{\bar{a}, \bar{b}\}$ be the product of the denominators of the coefficients of $p$.
  Then there exists $q \in k_0\{\bar{x}, \bar{y}, \bar{z}\}$ such that $q(\bar{a}, \bar{b}, \bar{z}) = dp$.
  Since $q(\bar{a}, \bar{b}, \bar{c}) = 0$, we have
  \[
  q(\bar{x}, \bar{y}, \bar{c}) \in \I_{k_0\langle \bar{c} \rangle}(\bar{a}, \bar{b}) = k_0\langle \bar{c} \rangle \{\bar{x}, \bar{y}\}\I_{k_0\langle \bar{c} \rangle}(\bar{a}) + k_0\langle \bar{c}\rangle \{\bar{x}, \bar{y}\}\I_{k_0\langle \bar{c} \rangle}(\bar{b}),
  \]
  where the latter equality follows from 
  $\I_k(\bar{a}, \bar{b}) = \langle\I_k(\bar{a}), \I_k(\bar{b}) \rangle$
  and the fact that $k_0\langle \bar{c}\rangle$ contains the fields of definitions of $\I_k(\bar{a})$ and $\I_k(\bar{b})$.
  By clearing the denominators with respect to $\bar{c}$, we conclude that there exists $h(\bar{z}) \in k_0\{\bar{z}\}$ such that $h(\bar{c}) \neq 0$ and
  \[
    h(\bar{z}) q(\bar{x}, \bar{y}, \bar{z}) \in k_0\{\bar{x}, \bar{y}, \bar{z}\}\I_{k_0}(\bar{a}, \bar{c}) + k_0 \{\bar{x}, \bar{y}, \bar{z}\}\I_{k_0}(\bar{b}, \bar{c}).
  \]
  Thus, $h(\bar{z}) q(\bar{a}, \bar{b}, \bar{z})$ vanishes on $\V_{K/k_0\langle \bar{a} \rangle}(\bar{c}) \cap \V_{K/k_0\langle \bar{b} \rangle}(\bar{c})$ and, consequently, on $C$.
  If $q(\bar{a}, \bar{b}, \bar{z})$ does not vanish on $C$,
  then $h(\bar{z})$ does.
  However, this is impossible due to Lemma~\ref{lem:components_are_generic} because $h(\bar{z}) \not\in \I_{k_0}(\bar{c})$.
  
  Thus, $p$ vanishes on $C$, and so $C \subset \V_{K/k_0\langle \bar{a}, \bar{b} \rangle}(\bar{c})$.
  On the other hand, $\V_{K/k_0\langle \bar{a}, \bar{b} \rangle}(\bar{c}) \subset \V_{K/k_0\langle \bar{a}\rangle}(\bar{c})$, so $C$ is a component of $\V_{K/k_0\langle \bar{a}, \bar{b} \rangle}(\bar{c})$.
\end{proof}

%%%%%%%%%%%%%%%%%%%%%%%%%%%%%%%%%%%%%%%%%

\begin{lemma}\label{lem:map_independent}
  Let $k$ with $k_0 \subset k \subset K$ be a differential field differentially finitely generated over $k_0$.
  Consider tuples $\bar{a}_1, \ldots, \bar{a}_\ell$ of the same length from $K$ 
  such that
  \[
    \I_k(\bar{a}_1) \cong \ldots \cong \I_k(\bar{a}_\ell) \quad \text{ and }\quad \I_k(\bar{a}_1, \ldots, \bar{a}_\ell) = \langle \I_k(\bar{a}_1), \ldots, \I_k(\bar{a}_\ell) \rangle.
  \]
  Then, for every permutation $\pi \in S_\ell$, there exists an endomorphism $\alpha_\pi$ of $K$ over $k$ such that $\alpha_{\pi}(\bar{a}_i) = \bar{a}_{\pi(i)}$ for every $1 \leqslant i \leqslant \ell$.
\end{lemma}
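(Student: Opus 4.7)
The plan is to first build a differential field automorphism $\beta_\pi$ of the subfield $k\langle \bar{a}_1, \ldots, \bar{a}_\ell\rangle \subset K$ that fixes $k$ pointwise and satisfies $\beta_\pi(\bar{a}_i) = \bar{a}_{\pi(i)}$, and then invoke the saturation property of $K$ recorded in Notation~\ref{not:tuples} to lift $\beta_\pi$ to a differential endomorphism $\alpha_\pi$ of $K$ over $k$.

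The crux is to show that the two evaluation homomorphisms $\mathrm{ev}_1, \mathrm{ev}_\pi \colon k\{\bar{x}_1, \ldots, \bar{x}_\ell\} \to K$ given by $\bar{x}_i \mapsto \bar{a}_i$ and by $\bar{x}_i \mapsto \bar{a}_{\pi(i)}$ have the same kernel. Once this is established, the two maps induce mutually inverse (differential) ring isomorphisms between $k\{\bar{a}_1, \ldots, \bar{a}_\ell\}$ and itself (the underlying set is unchanged by a permutation of generators), sending $\bar{a}_i \mapsto \bar{a}_{\pi(i)}$ and fixing $k$; passing to the fraction field yields $\beta_\pi$.

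To verify the kernel claim, I would let $J \subset k\{\bar{x}\}$ be the common ideal corresponding, under the identification $\bar{x} \leftrightarrow \bar{x}_i$, to each $\I_k(\bar{a}_i)$. The hypothesis $\I_k(\bar{a}_1, \ldots, \bar{a}_\ell) = \langle \I_k(\bar{a}_1), \ldots, \I_k(\bar{a}_\ell)\rangle$ then says that the joint vanishing ideal is generated inside $k\{\bar{x}_1, \ldots, \bar{x}_\ell\}$ by the union of the images of $J$ in the various $k\{\bar{x}_i\}$. The ring automorphism $\sigma$ of $k\{\bar{x}_1, \ldots, \bar{x}_\ell\}$ that permutes the blocks $\bar{x}_i \mapsto \bar{x}_{\pi(i)}$ permutes this generating family, hence fixes $\I_k(\bar{a}_1, \ldots, \bar{a}_\ell)$ setwise. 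Since $\mathrm{ev}_\pi = \mathrm{ev}_1 \circ \sigma$, we obtain $\ker \mathrm{ev}_\pi = \sigma^{-1}(\ker \mathrm{ev}_1) = \I_k(\bar{a}_1, \ldots, \bar{a}_\ell) = \ker \mathrm{ev}_1$.

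Finally, since $k$ is differentially finitely generated over the infinite field $k_0$, we have $|k| = |k_0|$, and adjoining the finitely many tuples $\bar{a}_1, \ldots, \bar{a}_\ell$ preserves this bound, so $|k\langle \bar{a}_1, \ldots, \bar{a}_\ell\rangle| \leqslant |k_0|$. The saturation hypothesis then extends $\beta_\pi$ to a differential endomorphism $\alpha_\pi$ of $K$, which fixes $k$ pointwise because $\beta_\pi$ does. The only delicate point is the bookkeeping around the identifications $\I_k(\bar{a}_i) \cong J$ living in the formally different rings $k\{\bar{x}_i\}$; once those are laid out carefully, permutation-invariance of the joint ideal drops out immediately and the remainder of the argument is formal.
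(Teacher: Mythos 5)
Your proof is correct and takes essentially the same route as the paper's: both observe that the block-permutation automorphism of $k\{\bar{x}_1,\ldots,\bar{x}_\ell\}$ preserves the joint vanishing ideal $\I_k(\bar{a}_1,\ldots,\bar{a}_\ell)$ because that ideal is generated by the individual ideals, which the permutation merely rearranges; one then passes to the quotient, extends to the fraction field, and lifts to an endomorphism of $K$ via saturation. The only cosmetic difference is that you phrase the key step as equality of the kernels of the two evaluation maps $\mathrm{ev}_1$ and $\mathrm{ev}_\pi=\mathrm{ev}_1\circ\sigma$ rather than as $\beta_\pi$-invariance of $J$ directly, which is the same fact.
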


\begin{proof}
  Consider the ideal $J = \I_k(\bar{a}_1, \ldots, \bar{a}_\ell) \subset k\{\bar{x}_1, \ldots, \bar{x}_\ell\}$.
  We fix $\pi \in S_\ell$.
  Let $\beta_\pi$ be the differential $k$-automorphism of $k\{\bar{x}_1, \ldots, \bar{x}_\ell\}$ defined by $\beta_{\pi}(\bar{x}_i) = \bar{x}_{\pi(i)}$ for every $1 \leqslant i \leqslant \ell$.
  Since the set $\{\I_k(\bar{a}_1), \ldots, \I_k(\bar{a}_\ell)\}$, where each $I_k(\bar{a}_j)$ is considered as a subset of $k\{\bar{x}_1, \ldots, \bar{x}_\ell\}$, is $\beta_{\pi}$-invariant, so is~$J$.
  Therefore, $\beta_{\pi}$ yields an automorphism, say $\alpha_{\pi}$, of
  \[
  k\{\bar{x}_1, \ldots, \bar{x}_\ell\} / J \cong k\{\bar{a}_1, \ldots, \bar{a}_s\}.
  \]
  $\alpha_\pi$ can be lifted uniquely to an automorpism of $k\langle \bar{a}_1, \ldots, \bar{a}_s\rangle$.
  The resulting automorphism can be lifted to an endomorphism of $K$ by Notation~\ref{not:tuples} since $|k\langle \bar{a}_1, \ldots, \bar{a}_s\rangle| = |k_0|$.
\end{proof}

%%%%%%%%%%%%%%%%%%%%%%%%%%%%%%%%%%%%%%%%%%%%%%

\begin{lemma}\label{lem:trdeg}
  Let $k$ with $k_0 \subset k \subset K$ be a differential field and $\bar{a}$ a tuple from $K$ and let $F := \FD_k(\bar{a})$ (see Notation~\ref{not:fields}) be such that $\trdeg_{F} k < \infty$. 
  Then $\trdeg_{F\langle \bar{a} \rangle} k = \trdeg_{F} k$.
\end{lemma}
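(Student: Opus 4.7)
The statement expresses a form of forking-independence of $\bar a$ from $k$ over the canonical base $F = \FD_k(\bar a)$. My plan is to prove it algebraically by establishing linear disjointness of $k$ and $F\langle \bar a\rangle$ over $F$ inside $K$, and then deducing the transcendence-degree equality.

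First, I will exploit the concrete form of the field of definition produced by the row-reduced basis $\{f_\lambda\}_{\lambda\in\Lambda}$ built in the proof of Proposition~\ref{prop:existence_N}. That family is simultaneously an $F$-basis of $\I_F(\bar a)$ and a $k$-basis of $\I_k(\bar a)$, so
$$\I_k(\bar a) \;=\; k\cdot \I_F(\bar a)\quad\text{and}\quad k\{\bar x\}/\I_k(\bar a)\;\cong\; k\otimes_F \bigl(F\{\bar x\}/\I_F(\bar a)\bigr).$$
Set $A := F\{\bar x\}/\I_F(\bar a)$ and $B := k\{\bar x\}/\I_k(\bar a) \cong k\otimes_F A$. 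Both are integral domains (their defining ideals are prime) with fraction fields $F\langle \bar a\rangle$ and $k\langle \bar a\rangle$, respectively.

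Next, I will pass to fractions: $k\otimes_F F\langle \bar a\rangle$ is the localization of $B$ at the image of $A\setminus\{0\}$, and since $B$ is a domain this localization embeds into $\operatorname{Frac}(B) = k\langle \bar a\rangle \subset K$. Hence the natural multiplication map $k\otimes_F F\langle \bar a\rangle \to K$, $c\otimes f \mapsto cf$, is injective, which is precisely the assertion that $k$ and $F\langle \bar a\rangle$ are linearly disjoint over $F$ in $K$.

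Finally, I will conclude using the standard passage from linear to algebraic disjointness. Pick a transcendence basis $t_1, \ldots, t_n$ of $k$ over $F$, using $n = \trdeg_F k < \infty$. The monomials in $\bar t$ are $F$-linearly independent by algebraic independence, and linear disjointness transports this to $F\langle \bar a\rangle$-linear independence; hence $\bar t$ is algebraically independent over $F\langle \bar a\rangle$. Since $k$ is algebraic over $F(\bar t)$, the compositum $F\langle \bar a\rangle\cdot k$ is algebraic over $F\langle \bar a\rangle(\bar t)$, giving $\trdeg_{F\langle \bar a\rangle} F\langle \bar a\rangle\cdot k = n = \trdeg_F k$. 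The most delicate step is the first one: extracting the ring-theoretic identification $\I_k(\bar a) = k\cdot \I_F(\bar a)$ from the bare notion of field of definition; the row-reduced basis from Proposition~\ref{prop:existence_N} provides exactly what is needed, and everything downstream is formal commutative algebra.
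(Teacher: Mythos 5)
Your proof is correct. It rests on exactly the same key fact as the paper's argument: because $\I_k(\bar a)$ is defined over $F$, it equals $k\otimes_F \I_F(\bar a)$, and therefore $F$-linear independence of the monomials in a transcendence basis of $k$ over $F$ survives passage to $F\langle\bar a\rangle$. The difference is one of packaging. You first isolate the stronger intermediate statement that $k$ and $F\langle\bar a\rangle$ are linearly disjoint over $F$, obtained by localizing $k\otimes_F A$ at the image of $A\setminus\{0\}$ and embedding into $\operatorname{Frac}(k\otimes_F A)=k\langle\bar a\rangle\subset K$, and then deduce the transcendence-degree equality formally. The paper instead argues by contradiction in one stroke: a putative algebraic relation gives $P\in F\{\bar x\}[y_1,\dots,y_N]$ with $P(\bar a,\alpha_1,\dots,\alpha_N)=0$ and $P(\bar a,\bar y)\neq 0$, and the field-of-definition property together with the $F$-linear independence of the monomials in $\alpha_1,\dots,\alpha_N$ forces every coefficient of $P$ (as a polynomial in $\bar y$) to vanish at $\bar a$, a contradiction. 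Your route is longer but yields a reusable statement (linear disjointness over the field of definition, which is the algebraic shadow of the model-theoretic fact that $\bar a$ is independent from $k$ over $\cb(\bar a/k)$); the paper's is shorter and avoids the tensor-product formalism. One minor remark: you do not actually need the row-reduced basis from Proposition~\ref{prop:existence_N} for the identity $\I_k(\bar a)=k\cdot\I_F(\bar a)$ --- it follows directly from the definition of the field of definition, since $\I_k(\bar a)$ is generated by elements of $F\{\bar x\}$ and extension of scalars along $F\subset k$ commutes with forming the generated ideal; the explicit basis just makes this concrete.
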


\begin{proof}
  Let $\alpha_1, \ldots, \alpha_N$ be a transcendence basis of $k$ over $F$.
  Assume that $\alpha_1, \ldots, \alpha_N$ are algebraically dependent over $F\langle \bar{a}\rangle$.
  Then there exists  $P \in F \{\bar{x}\}[y_1, \ldots, y_N]$ such that $P(\bar{a}, \alpha_1, \ldots, \alpha_n) = 0$ and $P(\bar{a}, y_1, \ldots, y_n) \neq 0$.
  On the other hand, since the field of definition of $\mathcal{I}_k(\bar{a})$ is $F$, and the monomials in $\alpha_1, \ldots, \alpha_N$ are $F$-linearly independent, every coefficient of $P$ as a polynomial in $y_1, \ldots, y_N$ vanishes at $\bar{a}$.
  Thus, $P(\bar{a}, y_1, \ldots, y_N) = 0$. Contradiction.
\end{proof}

%%%%%%%%%%%%%%%%%%%%%%%%%%%%%%%%%%%%%%%%%%%%%%

\begin{notation}
  For an irreducible differential-algebraic variety $X \subset K^n$, let $\adim X$ denote the \emph{algebraic dimension}, that is the transcendence degree of the algebra of regular functions.
  The algebraic dimension of an arbitrary differential-algebraic variety is defined as the maximum of the algebraic dimensions of  
  its components.
\end{notation}

%%%%%%%%%%%%%%%%%%%%%%%%%%%%%%%%%%%%%%%%%%%%%%

\begin{proposition}\label{prop:bound_N}
  Let:
  \begin{itemize}
  \item $k$ with $k_0 \subset k \subset K$ be a differential field of finite transcendence degree over $k_0$,
  \item 
$\bar{a}_1, \bar{a}_2, \ldots$ be tuples of the same length from $K$  
   such that
  \[
    \I_k(\bar{a}_1) \cong \I_k(\bar{a}_2) \cong \ldots \quad \text{and} \quad \I_k(\bar{a}_1, \ldots, \bar{a}_\ell) = \langle \I_k(\bar{a}_1), \ldots, \I_k(\bar{a}_\ell) \rangle \text{ for every }\ell \geqslant 1,
  \]
  \item 
  $r$ the smallest integer such that 
$
  \trdeg_{k_0\langle \bar{a}_1, \ldots, \bar{a}_{r} \rangle} k = \trdeg_{k_0\langle \bar{a}_1, \ldots, \bar{a}_{r + 1} \rangle} k
$.
  \end{itemize}
  Then 
  \begin{enumerate}[label=(\arabic*)]
      \item $r$ is the smallest integer such that $\FD_k(\bar{a}_1)$ (see Notation~\ref{not:fields}) is algebraic over $k_0\langle \bar{a}_1, \ldots, \bar{a}_r \rangle$;
      \item $\FD_k(\bar{a}_1) \subset k_0\langle \bar{a}_1, \ldots, \bar{a}_{r + 1}\rangle$.
  \end{enumerate}
\end{proposition}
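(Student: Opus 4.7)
The plan is to reduce the statement to the behavior of a single numerical invariant. Set $F := \FD_k(\bar a_1)$, $L_\ell := k_0\langle \bar a_1,\dots,\bar a_\ell\rangle$, and $M_\ell := k\cdot L_\ell$. The hypotheses together with Lemma~\ref{lem:independence} imply $\FD_k(\bar a_i) = F$ for every $i$, and, iterated, that $\I_{M_\ell}(\bar a_{\ell+1})$ is generated by $\I_k(\bar a_{\ell+1})$, so $\FD_{M_\ell}(\bar a_{\ell+1}) = F$. A short induction on $\ell$---with base case the hypothesis $\trdeg_F k < \infty$ and inductive step an application of Lemma~\ref{lem:trdeg} with ground field $F$ and ambient field $M_\ell$---shows that $\trdeg_F M_\ell$ is finite and $\trdeg_F M_{\ell+1} = \trdeg_F M_\ell + \trdeg_F \bar a_{\ell+1}$. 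Decomposing the tower $L_\ell\subset L_\ell\cdot F\subset M_\ell$ and combining these pieces produces the central identity
\[
  d_\ell := \trdeg_{L_\ell} M_\ell \;=\; \trdeg_F k \;+\; \trdeg_{L_\ell} F.
\]

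For Part~(1), the tower law rewrites $d_{\ell+1} - d_\ell = \trdeg_{M_\ell}(\bar a_{\ell+1}) - \trdeg_{L_\ell}(\bar a_{\ell+1})$. The first term equals a constant $\gamma := \trdeg_k \bar a_1$, since $\I_{M_\ell}(\bar a_{\ell+1})$ is generated by $\I_k(\bar a_{\ell+1})$. For the second, Lemma~\ref{lem:map_independent} produces an endomorphism of $K$ over $k$ swapping $\bar a_{\ell+1}$ and $\bar a_{\ell+2}$, giving $\trdeg_{L_\ell}(\bar a_{\ell+1}) = \trdeg_{L_\ell}(\bar a_{\ell+2}) \geqslant \trdeg_{L_{\ell+1}}(\bar a_{\ell+2})$; hence the map $m\mapsto \trdeg_{L_m}(\bar a_{m+1})$ is nonincreasing, and $d_{\ell+1} - d_\ell$ is nonpositive and nondecreasing. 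Once this difference reaches $0$ at $\ell = r$ it must stay $0$, so $d_\ell = d_r$ for every $\ell \geqslant r$. Proposition~\ref{prop:existence_N} supplies some $N$ with $F \subset L_N$, whence $d_N = \trdeg_F k$; the stabilization then forces $\trdeg_{L_r} F = 0$, i.e.\ $F$ is algebraic over $L_r$. The converse implication is immediate from the central identity, which establishes Part~(1).

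For Part~(2), suppose towards a contradiction that some differential $L_{r+1}$-endomorphism $\sigma$ of $K$---lifted via the saturation condition of Notation~\ref{not:tuples}---does not fix $F$ pointwise. Since $F \subset L_r^{\alg}$ by Part~(1), both $F$ and $\sigma(F)$ lie in $L_r^{\alg}$; using the genericity $\trdeg_{L_r}(\bar a_{r+1}) = \gamma$ that falls out of the first paragraph and a standard extension-contraction/flatness argument, the two ideals $\I_F(\bar a_{r+1})$ and $\I_{\sigma(F)}(\bar a_{r+1})$ extend to the same prime ideal of $(F\cdot\sigma(F))\{\bar x\}$. Since $\sigma$ fixes $\bar a_{r+1}$, it therefore preserves the reduced echelon canonical generating set $\{f_\lambda\}$ of $\I_F(\bar a_{r+1})$ produced in the proof of Proposition~\ref{prop:existence_N}; uniqueness of leading monomials forces $\sigma(f_\lambda) = f_\lambda$ individually, so $\sigma$ fixes every coefficient of every $f_\lambda$. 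Those coefficients generate $F$ over $k_0$, giving $\sigma|_F = \operatorname{id}$, a contradiction.

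The most delicate step is precisely this descent: establishing that the field of definition of the combined ideal is exactly $F$, rather than dropping to a smaller subfield such as $F\cap\sigma(F)$, is the crucial leverage, and it is the algebraic-dimension equality extracted from Part~(1) that supplies it. I expect the bookkeeping there---keeping track of which ideals live over which polynomial rings as one extends scalars from $k$ to $F\cdot\sigma(F)$---to be the main technical obstacle.
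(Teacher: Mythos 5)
Your argument for Part~(1) is essentially sound and takes a genuinely different route from the paper's: instead of analyzing components of the varieties $\V_{K/k_0\langle \bar a_1,\dots,\bar a_\ell\rangle}(\bar c)$, you combine the decomposition $d_\ell = \trdeg_F k + \trdeg_{L_\ell}F$ (which is the paper's equation~\eqref{eq:trdeg_decomposition}) with a monotonicity argument on the increments $d_{\ell+1}-d_\ell$, using Lemma~\ref{lem:map_independent} to show $\trdeg_{L_\ell}\bar a_{\ell+1}$ is nonincreasing, so that once the defect stabilizes it stays stable and Proposition~\ref{prop:existence_N} pins down the limiting value. This avoids the paper's geometric Claim entirely for Part~(1). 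Do note that in the intended application $\bar a_{\ell+1}$ contains differentially transcendental inputs, so quantities like $\trdeg_{L_\ell}\bar a_{\ell+1}$ and $\gamma=\trdeg_k\bar a_1$ are infinite; the identity $d_{\ell+1}-d_\ell = \trdeg_{M_\ell}\bar a_{\ell+1}-\trdeg_{L_\ell}\bar a_{\ell+1}$ must therefore be run at each finite prolongation order and then passed to the limit (cf.\ Remark~\ref{rem:some_prop}). This is routine but should be said.

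Part~(2), however, has a genuine gap, and it sits exactly where you flag ``the main technical obstacle.'' The genericity you extract from Part~(1), namely $\trdeg_{L_r}\bar a_{r+1}^{(\leqslant j)}=\trdeg_{kL_r}\bar a_{r+1}^{(\leqslant j)}$ for all $j$, is an \emph{independence} (non-forking) statement: it only tells you that $\I_{F\sigma(F)L_r}(\bar a_{r+1})$ is a \emph{prime component} of the ideal generated by $\I_{FL_r}(\bar a_{r+1})$ after extension of scalars, not that the extended ideal is prime. Since $F\sigma(F)L_r$ is algebraic over $FL_r$, the extension can split into several Galois-conjugate components with $\bar a_{r+1}$ lying on just one of them; in that case $\sigma(f_\lambda)-f_\lambda$ is a nonzero element of $\I_{F\sigma(F)L_r}(\bar a_{r+1})$ whose support is a proper subset of that of $f_\lambda$, and the reduced-echelon uniqueness argument gives no contradiction because the echelon property of $\{f_\lambda\}$ is only known relative to the ideal generated by $\I_k(\bar a_{r+1})$, not relative to the (possibly larger) full vanishing ideal over the compositum. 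No ``extension--contraction/flatness'' argument can supply primality here: that is precisely the stationarity statement that Part~(2) is equivalent to. The paper closes this gap by using the \emph{entire} infinite independent sequence: a putative common component is transported to all $Y_j$, $j>r$, by Lemma~\ref{lem:map_independent}, shown to persist as a component of every $X_j$ by Lemma~\ref{lem:almost_intersection} (with Lemma~\ref{lem:components_are_generic}), and then killed by Proposition~\ref{prop:existence_N}; the model-theoretic proof does the same via the finite-equivalence-relation argument in step~(P2). Your Part~(2) never invokes $\bar a_{r+2},\bar a_{r+3},\dots$, which is a structural sign that the missing ingredient cannot be recovered from what you have established.
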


\begin{proof}
  Let $\mathcal{C}$ be the field of constants of $K$ and $\bar{c}$ be any set of generators of $\FD_k(\bar{a}_1)$.
  Consider a sequence of varieties (see Notation~\ref{not:fields}):
  \begin{equation}\label{eq:x_var}
    X_0 := \V_{K/k_0}(\bar{c}) \supset X_1 := \V_{K/k_0\langle \bar{a}_1 \rangle}(\bar{c}) \supset X_2 := \V_{K/k_0\langle \bar{a}_1, \bar{a}_2 \rangle}(\bar{c}) \supset \ldots
  \end{equation}
  
  \paragraph{Claim:} \emph{For every $i \geqslant 0$, if $X_{i}$ and $X_{i + 1}$ have a common component $C$, then $C = \{\bar{c}\}$}.
  
 Let $i$ be such that $X_i$ and $X_{i+1}$ have a common component $C$.  For every $j \geqslant 1$, we introduce $Y_j := \V_{K/k_0\langle \bar{a}_j \rangle}(\bar{c})$.
  Since $X_{i + 1} \subset Y_{i + 1}$, we have $C \subset Y_{i + 1}$.
  We claim that, for every $j \geqslant i + 1$, $C \subset Y_j$.
  Lemma~\ref{lem:map_independent} implies that there exists a $k$-endomorphism $\alpha$ of $K$ such that $\alpha$ leaves $\bar{a}_1, \ldots, \bar{a}_i$ invariant and maps $\bar{a}_{i + 1}$ to $\bar{a}_j$.
  Since $C$ is a component of $X_i$, it is defined over $k_0\langle \bar{a}_1, \ldots, \bar{a}_i \rangle^{\alg}$, and therefore $C$  is $\alpha$-invariant.
  Thus, by applying $\alpha$ to the inclusion $C \subset Y_{i + 1}$, we obtain $C \subset Y_{j}$.
  
  By applying Lemma~\ref{lem:almost_intersection} iteratively to the component $C$ and $\bar{a} = (\bar{a}_1, \ldots, \bar{a}_{j})$ and $\bar{b} = \bar{a}_{j + 1}$ for $j = i + 1, i + 2, \ldots$, we show that $C$ is a component of $X_j$ for every $j \geqslant i + 1$.
  On the other hand, Proposition~\ref{prop:existence_N} implies that there exists $N$ such that $X_N = \{\bar{c}\}$.
  Thus, $C = \{\bar{c}\}$, and the claim is proved.
  
  Since $\bar{a}_1, \bar{a}_2, \ldots$ have the same ideals of definition over $k$ and 
  ideals of the form $\I_k(\bar{a}_1, \ldots, \bar{a}_s)$ are generated by the ideals of $\bar{a}_i$'s,
  we have
  \[
    F := \FD_k(\bar{a}_1) = \FD_k(\bar{a}_1, \bar{a}_2) = \FD_k(\bar{a}_1, \bar{a}_2, \bar{a}_3) = \ldots
  \]
  Therefore, for every $s \geqslant 0$,
  \begin{equation}\label{eq:trdeg_decomposition}
    \trdeg_{k_0\langle \bar{a}_1, \ldots, \bar{a}_s \rangle} k = \trdeg_{k_0\langle \bar{a}_1, \ldots, \bar{a}_s \rangle} \bar{c} + \trdeg_{k_0\langle \bar{c}, \bar{a}_1, \ldots, \bar{a}_s \rangle} k = \trdeg_{k_0\langle \bar{a}_1, \ldots, \bar{a}_s \rangle} \bar{c} + \trdeg_F k,
  \end{equation}
  where the latter equality is due to Lemma~\ref{lem:trdeg}.
  Thus, $r$ is the smallest integer such that 
  \[
    \trdeg_{k_0\langle \bar{a}_1, \ldots, \bar{a}_r \rangle} \bar{c} = \trdeg_{k_0\langle \bar{a}_1, \ldots, \bar{a}_{r + 1} \rangle} \bar{c}.
  \]
  Then $\adim X_r = \adim X_{r + 1}$, so every component of $X_{r + 1}$ is $\{\bar{c}\}$.
  Hence, $X_{r + 1} = \{\bar{c}\}$, and  so $\adim X_{r} = 0$.
  The fact that $\adim X_r = 0$ implies the first part of the proposition, and $X_{r + 1} = \{\bar{c}\}$ implies the second part of the proposition.
\end{proof}

\begin{proof}[Proof of Theorem~\ref{thm:num_exp}]
  Consider a generic solution \[(\bar{x}_1^\ast, \ldots, \bar{x}_{\ell + 1}^\ast, \bar{y}_1^\ast, \ldots, \bar{y}_{\ell + 1}^\ast, \bar{u}_1^\ast, \ldots, \bar{u}_{\ell + 1}^\ast)\] of $\Sigma_{\ell + 1}$.
  We apply Proposition~\ref{prop:bound_N} with $\bar{a}_i = (\bar{y}_i^\ast, \bar{u}_i^\ast)$ for every $1 \leqslant i \leqslant \ell + 1$, $k_0 = \C$, $k = \C(\bar{\mu})$.
  Since the sequence $\trdeg_{k_0\langle \bar{a}_1, \ldots, \bar{a}_i\rangle} k$ for $i = 0, \ldots, \ell + 1$ is nonincreasing, there will be $r \leqslant \ell$ as in Proposition~\ref{prop:bound_N}.
  Furthermore, it will be the same as $r$ in the statement of Theorem~\ref{thm:num_exp}.
  We have: \begin{itemize}
  \item $\FD_k(\bar{a}_1)$ is the field of globally ME-identifiable functions (by~\cite[Theorem~19]{allident} or Proposition~\ref{prop:existence_N}) and \item the field of locally ME-identifiable functions is algebraic over $\FD_k(\bar{a}_1)$.
  \end{itemize}
  Hence,  $r$ being the smallest number such that $\FD_k(\bar{a}_1)$ is algebraic over $k_0 \langle \bar{a}_1, \ldots, \bar{a}_r\rangle$ implies that $r$ is the smallest number such that
  the field of locally SE-identifiable functions of $\Sigma_r$ coincides with the field of locally ME-identifiable function in $\Sigma$.
  Thus, $\NEL(\Sigma) = r$.
  Finally, $\FD_k(\bar{a}_1) \subset k_0\langle\bar{a}_1, \ldots, \bar{a}_{r + 1} \rangle$ implies that ME-identifiable functions in $\Sigma$ are SE-identifiable in $\Sigma_{r + 1}$, so \[\NEG(\Sigma) \leqslant r + 1 = \NEL(\Sigma) + 1.\] 
\end{proof}

%%%%%%%%%%%%%%%%%%%%%%%%%%%%%%%%%%%%%%%%%%%%%%

%%%%%%%%%%%%%%%%%%%%%%%%%%%%%%%%%%%%%%%%%%%%%%%%%%%%%%%%

\section{Algorithm, implementation, and examples}\label{sec:alg}

\subsection{Algorithm: theory}\label{sec:alg_theory}

Theorem~\ref{thm:num_exp} implies the correctness of the following algorithm.

\begin{algorithm}[H]
\caption{Computing $\NEL(\Sigma)$ and estimating $\NEG(\Sigma)$}\label{alg:num_exp}
\begin{description}[labelwidth=15pt,leftmargin=\dimexpr\labelwidth+\labelsep\relax]
  \item[In:] 
  \begin{itemize}[leftmargin=*]
      \item an algebraic differential model $\Sigma$;
      \item (optional; for probabilistic version) real number $0 \leqslant p < 1$;
  \end{itemize}
 \item[Out:] positive integer $r$ such that $\NEL(\Sigma) = r$ and $\NEG(\Sigma) \in \{r, r + 1\}$.
 In the probabilistic version, this result will be correct with probability at least $p$.
\end{description}

Set $d_0 = \ell$. For $i = 1, 2, \ldots, \ell + 1$, do:

\begin{enumerate}[leftmargin=!,labelwidth=1.5em, label=\arabic*]

    \item Using Algorithm~\ref{alg:defect}, compute $d_i = \df(\Sigma_i)$ (see Definition~\ref{def:defect});\\
    \emph{(in the probabilistic version, the input probability for Algorithm~\ref{alg:defect} is $1 - \frac{1 - p}{\ell}$)}
    
    \item If $d_i = d_{i - 1}$, stop and return $i - 1$.
    
\end{enumerate}
\end{algorithm}

\begin{lemma}
  Algorithm~\ref{alg:num_exp} is correct.
\end{lemma}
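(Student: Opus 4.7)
The plan is to derive correctness directly from Theorem~\ref{thm:num_exp}(1), reducing the task to two small observations about the sequence $d_i := \df(\Sigma_i)$ plus a termination argument.

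First, I would justify the initialization $d_0 = \ell$. The $0$-fold replica $\Sigma_0$ has no state, input or output variables, so the generic solution is the empty tuple. Then $\C\langle\bar{y}^\ast,\bar{u}^\ast\rangle = \C$ and $\C(\bar{\mu})\langle\bar{y}^\ast,\bar{u}^\ast\rangle = \C(\bar{\mu})$, giving
\[
\df(\Sigma_0) \;=\; \trdeg_{\C}\C(\bar{\mu}) \;=\; \ell,
\]
since the symbolic parameters $\mu_1,\ldots,\mu_\ell$ are algebraically independent over $\C$.

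Next, I would argue that $(d_i)_{i\geqslant 0}$ is non-increasing. Set $A_i := \C\langle\bar{y}_1^\ast,\ldots,\bar{y}_i^\ast,\bar{u}_1^\ast,\ldots,\bar{u}_i^\ast\rangle$; then $A_i\subset A_{i+1}$, and enlarging the base field of an extension can only weakly decrease transcendence degree, so $\df(\Sigma_{i+1})\leqslant \df(\Sigma_i)$. (This monotonicity is already embedded in the proof of Proposition~\ref{prop:bound_N} via equation~\eqref{eq:trdeg_decomposition} applied with $\bar{a}_i = (\bar{y}_i^\ast,\bar{u}_i^\ast)$.)

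With these two facts, correctness follows immediately. By Theorem~\ref{thm:num_exp}(1), $\NEL(\Sigma)$ equals the least $r$ with $d_r = d_{r+1}$. The loop returns $i-1$ at the first $i\geqslant 1$ with $d_i = d_{i-1}$, which is precisely this $r$; and Theorem~\ref{thm:num_exp}(2) then guarantees $\NEG(\Sigma)\in\{r,r+1\}$, matching the claimed output. For termination, note that $d_0,d_1,\ldots,d_{\ell+1}$ is a non-increasing sequence of nonnegative integers starting at $\ell$, so it can strictly decrease at most $\ell$ times; hence at least one pair of consecutive terms must coincide within the $\ell+1$ iterations of the loop.

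The main ``obstacle'' is essentially bookkeeping: there is no real difficulty once Theorem~\ref{thm:num_exp} is in hand. The only mildly subtle points worth writing out are the base-case computation $\df(\Sigma_0)=\ell$ and the monotonicity of $(d_i)$, both of which are a couple of lines.
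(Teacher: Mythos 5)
Your deterministic argument is correct and follows the same route as the paper: the returned index is right by Theorem~\ref{thm:num_exp}, and your additional bookkeeping --- the justification of the initialization $d_0=\ell$, the monotonicity of the sequence $(d_i)$, and the resulting termination within $\ell+1$ iterations --- is sound. (The paper states the monotonicity/termination observation not in the proof of this lemma but in the proof of Proposition~\ref{prop:complexity}; your version is, if anything, more self-contained on these points.)

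There is, however, one genuine omission. The lemma asserts correctness of Algorithm~\ref{alg:num_exp} including its probabilistic version, whose output specification promises that the result is correct with probability at least $p$, and your proof is silent on this. Your argument establishes only the conditional statement ``if every call to Algorithm~\ref{alg:defect} returns the true value of $\df(\Sigma_i)$, then the output is correct.'' What is missing is the unconditional probabilistic claim: each iteration invokes Algorithm~\ref{alg:defect} with input probability $1-\frac{1-p}{\ell}$, so each call may be wrong with probability at most $\frac{1-p}{\ell}$, and a union bound over the calls made by the loop shows that the probability that at least one defect value is wrong is at most $1-p$; hence the output is correct with probability at least $p$. This is exactly the second half of the paper's proof. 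The fix is one line, but as written your proof does not cover the full statement.
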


\begin{proof}
  If the outputs of Algorithm~\ref{alg:defect} are correct, the returned result will be correct due to Theorem~\ref{thm:num_exp}.
  In the probabilistic version, the probability that at least one of the instances of Algorihm~\ref{alg:defect} will return wrong result does not exceed
   $ \ell \left(1 - \left(1 - \tfrac{1 - p}{\ell}\right)\right) = 1 - p$.
\end{proof}

Our algorithm for computing the identifiability defect will use, as a subroutine, algorithm(s) described in Theorem~\ref{thm:sed} below.

\begin{notation}
  We  call the \emph{complexity} of a
  model $\Sigma$ the maximum of the total number of variables (parameters, states, inputs, and outputs) and the length of a straight-line program (see~\cite[Chapter~4.1]{algebraic_complexity}) computing the numerators and denominators of the right-hand side of $\Sigma$.
 For measuring the complexity of the algorithms in this section, we use the notion of arithmetic complexity, that is the number of arithmetic operations in the ground field, see~\cite[Chapter~12]{Wigderson2019} for more details.
\end{notation}

\begin{theorem}[{\cite{Sedoglavic}}]\label{thm:sed}
Consider the following problem:
\begin{description}[leftmargin=2.5em]
  \item[In:] an algebraic differential model $\Sigma$ without parameters (that is, $\ell = 0$);
 \item[Out:] $\trdeg_{\C\langle \bar{y}^\ast, \bar{u}^\ast \rangle}\C(\bar{x}^\ast)$, where $(\bar{x}^\ast, \bar{y}^\ast, \bar{u}^\ast)$ is any generic solution of $\Sigma$.
\end{description}
Then
\begin{enumerate}
    \item\label{part:1} There exists a deterministic algorithm for solving this problem; 
    \item\label{part:2} There exists a probabilistic Monte Carlo algorithm with polynomial arithmetic complexity with respect to the complexity of $\Sigma$.
\end{enumerate}
\end{theorem}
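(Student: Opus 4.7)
The plan is to reduce the computation of $\trdeg_{\C\langle \bar{y}^\ast, \bar{u}^\ast \rangle}\C(\bar{x}^\ast)$ to a Jacobian rank computation, following the classical observability-theoretic approach. Since $\Sigma$ has no parameters and the inputs are treated as independent differential indeterminates, every derivative $y_i^{(k)}$ can be expressed as a polynomial in $\bar{x}$, $\bar{u}$, $\bar{u}', \ldots, \bar{u}^{(k)}$ with coefficients in $\C$ by iteratively applying $\frac{d}{dt}$ using the chain rule and substituting $\bar{x}' = \bar{f}(\bar{x}, \bar{u})$. Denote these polynomial expressions by $\Phi_{i,k}(\bar{x}, \bar{u}, \ldots, \bar{u}^{(k)})$. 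Since $\C\langle \bar{y}^\ast, \bar{u}^\ast\rangle$ is generated over $\C$ by the values $\Phi_{i,k}(\bar{x}^\ast, \ldots)$ together with the derivatives of $\bar{u}^\ast$, the transcendence degree in question equals $n - r$, where $n = |\bar{x}|$ and $r$ is the generic rank (over $\C(\bar{x}, \bar{u}, \bar{u}', \ldots)$) of the Jacobian
\[
  J \;=\; \left(\frac{\partial \Phi_{i,k}}{\partial x_j}\right)_{i,\,k,\,j}.
\]

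The first task is to bound the number of rows needed: by a standard flag argument on the nested sequence of images of the Jacobian as $k$ grows (analogous to the observability rank test), it suffices to take $0 \leqslant k \leqslant n$ for each output $y_i$. Hence $J$ has at most $n \cdot |\bar{y}|$ rows and $n$ columns, and its entries are obtained by symbolic differentiation of the $\Phi_{i,k}$'s. For part~\ref{part:1}, one builds these entries symbolically by iterated Lie differentiation along $\bar{f}$ and computes the rank of $J$ over the rational function field; this is an effective but potentially expensive procedure.

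For part~\ref{part:2}, the idea is to avoid symbolic rank computation by evaluating $J$ numerically at a random point. Sample values for $\bar{x}$ and for each required $\bar{u}^{(j)}$ uniformly from a sufficiently large finite subset $S \subset \Q$, evaluate the entries of $J$ at this point using forward-mode automatic differentiation through the straight-line program representing $\bar{f}, \bar{g}$ (the iterated Lie derivatives $\Phi_{i,k}$ themselves admit SLPs whose length is polynomial in $n$ and in the SLP length of $\Sigma$), and return $n$ minus the numerical rank. The Schwartz--Zippel lemma controls the probability that the numerical rank undershoots the generic one: this probability is bounded by $D/|S|$, where $D$ is a polynomial bound on the degrees of minors of $J$ — itself polynomial in $n$ and in the degrees occurring in $\bar{f}, \bar{g}$. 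Taking $|S|$ exponentially large in the desired confidence (but of bit-size polynomial in the input) drives the failure probability below any threshold while keeping arithmetic complexity polynomial.

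The main obstacle is the simultaneous control of the differential order required, the degrees of the entries of $J$, and the SLP length used to evaluate them, so that every cost estimate remains polynomial in the complexity of $\Sigma$ as defined above; the careful amortization of these quantities is the technical heart of Sedoglavic's algorithm, and the probabilistic savings come precisely from replacing symbolic rank by a single numerical evaluation at a random point.
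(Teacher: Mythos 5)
Your proposal is essentially a reconstruction of the content of the cited reference, whereas the paper does not reprove the theorem at all: its ``proof'' consists of identifying Part~1 with \cite[Corollary~2.1]{Sedoglavic} and Part~2 with \cite[Theorem~1.1]{Sedoglavic}, plus a one-paragraph translation between Sedoglavic's notion of observability (the minimal set of state variables whose knowledge makes the system observable) and the transcendence degree $\trdeg_{\C\langle \bar{y}^\ast, \bar{u}^\ast \rangle}\C(\bar{x}^\ast)$. Your route --- reduce the transcendence degree to the generic rank of the Jacobian of the iterated Lie derivatives $\Phi_{i,k}$ with respect to $\bar{x}$, truncate at order $n$ by the stabilization of the rank flag, and replace symbolic rank by evaluation at a random point controlled by Schwartz--Zippel --- is precisely the mechanism inside the cited results, so in substance the two approaches coincide; what your version buys is a self-contained argument, while the paper's buys brevity and the sharper complexity bookkeeping already done in \cite{Sedoglavic}. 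A few points you should tighten if you want your sketch to stand on its own: (i) $\bar{f}$ and $\bar{g}$ are rational, not polynomial, so the $\Phi_{i,k}$ are rational functions and the saturation at the common denominator $Q$ (as in the ideal $I_\Sigma$) must be carried through the Lie-differentiation and the degree bounds; (ii) the identity $\trdeg = n - r$ rests on the characteristic-zero Jacobian criterion \emph{and} on the fact that for a generic solution the components of $\bar{x}^\ast$ and the derivatives of $\bar{u}^\ast$ are algebraically independent over $\C$, so that the rank at the generic solution equals the generic rank --- this deserves a sentence; (iii) the rank-stabilization claim (once $r_{k+1}=r_k$ the flag is stationary) needs its one-line proof via the derivation acting on the row span. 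None of these is a genuine gap, but as written they are asserted rather than argued.
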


\begin{proof}
  The theorem follows from the results from~\cite{Sedoglavic} as follows.
  Part~\ref{part:1} follows from~\cite[Corollary~2.1]{Sedoglavic}, in which $X$ is $\bar x^\ast$, $Y$ is $\bar{y}^\ast$ and $\mathcal G$ is $\C\langle \bar{y}^\ast, \bar{u}^\ast \rangle$.
  
  Part~\ref{part:2} follows from \cite[Theorem~1.1]{Sedoglavic} (together with a more precise complexity bound) as follows.
  The algorithm 
 whose existence is stated in~\cite[Theorem~1.1]{Sedoglavic} computes the 
  smallest
  number of nonobservable 
 state variables
 that are assumed to be known in order to make the system observable.
  The definition of observability~\cite[Section~2.1]{Sedoglavic} implies that  the components of $\bar x ^\ast$ corresponding to such  a set of state variables is a transcendence basis of $\C(\bar{x}^\ast)$ over $\C(\bar{y}^\ast, \bar{u}^\ast)$, so the cardinality of this set is the desired transcendence degree.
\end{proof}

\begin{algorithm}[H]
\caption{Computing $\df(\Sigma)$}\label{alg:defect}
\begin{description}[labelwidth=15pt,leftmargin=\dimexpr\labelwidth+\labelsep\relax]
  \item[In:] 
  \begin{itemize}[leftmargin=*]
      \item an algebraic differential model $\Sigma$;
      \item (optional; for probabilistic version) real number $0 \leqslant p < 1$;
  \end{itemize}
 \item[Out:] $\df(\Sigma)$. In the probabilistic version, this result is correct with probability at least $p$.
\end{description}

\begin{enumerate}[leftmargin=!,labelwidth=1.5em, label=\arabic*]
    \item Construct two parameter-free algebraic differential models:
    \begin{enumerate}
        \item $\Sigma'$ obtained from $\Sigma$ by viewing all parameters as state variables satisfying equations $\mu_i' = 0$ for every $\mu_i \in \bar{\mu}$;
        \item $\Sigma''$ obtained from $\Sigma'$ by adding a new output for each state variable corresponding to a parameter of $\Sigma$.
    \end{enumerate}
    
    \item Run any of the algorithms from Theorem~\ref{thm:sed} on $\Sigma'$ and $\Sigma''$, denote the results by $A$ and $B$, respectively
    \emph{(in the probabilistic version, the input probability is $\frac{1 + p}{2}$).}

    \item return $A - B$.
\end{enumerate}
\end{algorithm}

\begin{lemma}
  Algorithm~\ref{alg:defect} is correct.
\end{lemma}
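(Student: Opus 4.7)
The plan is to show that the two invocations of the algorithm from Theorem~\ref{thm:sed} compute exactly the transcendence degrees of a common overfield over $\C\langle\bar{y}^\ast,\bar{u}^\ast\rangle$ and over $\C(\bar{\mu})\langle\bar{y}^\ast,\bar{u}^\ast\rangle$ respectively, and then to apply the tower law for transcendence degree. Concretely, fix a generic solution $(\bar{x}^\ast,\bar{y}^\ast,\bar{u}^\ast)$ of $\Sigma$; because $\bar{\mu}$ is a tuple of constants, it extends in an obvious way to a generic solution of $\Sigma'$ (with state tuple $(\bar{x},\bar{\mu})$ and outputs $\bar{y}$) and to a generic solution of $\Sigma''$ (with the same state tuple but with outputs $(\bar{y},\bar{\mu})$). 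Theorem~\ref{thm:sed} then returns
\[
A=\trdeg_{\C\langle\bar{y}^\ast,\bar{u}^\ast\rangle}\C(\bar{x}^\ast,\bar{\mu})\quad\text{and}\quad B=\trdeg_{\C\langle\bar{y}^\ast,\bar{u}^\ast,\bar{\mu}\rangle}\C(\bar{x}^\ast,\bar{\mu}).
\]
Since $\bar{\mu}\subset\C$-constants one has $\C\langle\bar{y}^\ast,\bar{u}^\ast,\bar{\mu}\rangle=\C(\bar{\mu})\langle\bar{y}^\ast,\bar{u}^\ast\rangle$, which rewrites $B$ in the form needed.

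Next I would set up the tower $\C\langle\bar{y}^\ast,\bar{u}^\ast\rangle\subset\C(\bar{\mu})\langle\bar{y}^\ast,\bar{u}^\ast\rangle\subset L$, where $L$ is the differential field generated by $\bar{x}^\ast$, $\bar{\mu}$, and $\bar{u}^\ast$; note that $L$ contains $\bar{y}^\ast$ and all its derivatives via the output equations, so this really is a tower. Using transitivity of transcendence degree for this tower on the one hand, and for the intermediate tower $\C(\bar{\mu})\langle\bar{y}^\ast,\bar{u}^\ast\rangle\subset\C(\bar{x}^\ast,\bar{\mu})\cdot\C(\bar{\mu})\langle\bar{y}^\ast,\bar{u}^\ast\rangle\subset L$ on the other, I would reduce to the identity
\[
\trdeg_{\C\langle\bar{y}^\ast,\bar{u}^\ast\rangle}\C(\bar{x}^\ast,\bar{\mu})=\trdeg_{\C\langle\bar{y}^\ast,\bar{u}^\ast\rangle}\C(\bar{\mu})\langle\bar{y}^\ast,\bar{u}^\ast\rangle+\trdeg_{\C(\bar{\mu})\langle\bar{y}^\ast,\bar{u}^\ast\rangle}\C(\bar{x}^\ast,\bar{\mu}),
\]
i.e.\ $A=\df(\Sigma)+B$, which gives the claimed equality $\df(\Sigma)=A-B$ returned by Algorithm~\ref{alg:defect}. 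The probabilistic statement then follows by a union bound: if each of the two calls to Theorem~\ref{thm:sed} is correct with probability at least $\tfrac{1+p}{2}$, then the returned difference is correct with probability at least $p$.

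The main obstacle is the bookkeeping in the tower-law step, and in particular verifying that $\C(\bar{x}^\ast,\bar{\mu})$ and $\C(\bar{\mu})\langle\bar{y}^\ast,\bar{u}^\ast\rangle$ sit inside a common field with the additivity identity taking the clean form above. This ultimately relies on the fact that in a generic solution, every derivative $\bar{y}^{\ast(k)}$ is a rational function of $\bar{x}^\ast$, $\bar{\mu}$ and finitely many derivatives of $\bar{u}^\ast$ (obtained by iteratively differentiating the output equations and eliminating $\bar{x}^{\ast(j)}$ via the state equations); once this is spelled out, the tower law and the identification of the two Sedoglavic outputs with the trdegs $A$ and $B$ go through routinely.
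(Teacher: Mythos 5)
Your proposal is correct and follows essentially the same route as the paper's proof: identify $A=\trdeg_{\C\langle\bar{y}^\ast,\bar{u}^\ast\rangle}\C(\bar{x}^\ast,\bar{\mu})$ and $B=\trdeg_{\C(\bar{\mu})\langle\bar{y}^\ast,\bar{u}^\ast\rangle}\C(\bar{x}^\ast,\bar{\mu})$, apply additivity of transcendence degree in the tower $\C\langle\bar{y}^\ast,\bar{u}^\ast\rangle\subset\C(\bar{\mu})\langle\bar{y}^\ast,\bar{u}^\ast\rangle\subset\C(\bar{\mu})\langle\bar{y}^\ast,\bar{u}^\ast\rangle(\bar{x}^\ast)$ to get $A=\df(\Sigma)+B$, and finish with the union bound. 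The extra care you take in justifying that the generic solution of $\Sigma$ induces generic solutions of $\Sigma'$ and $\Sigma''$ is a point the paper leaves implicit, but it does not change the argument.
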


\begin{proof}
  Let $(\bar{x}^\ast, \bar{y}^\ast, \bar{u}^\ast)$ be a generic solution of~\eqref{eq:sigma}.
  Then
  \[
    A = \trdeg_{\C\langle \bar{y}^\ast, \bar{u}^\ast \rangle} \C(\bar{x}^\ast, \bar{\mu}) \quad\text{ and }\quad B = \trdeg_{\C(\bar{\mu})\langle \bar{y}^\ast, \bar{u}^\ast \rangle} \C(\bar{x}^\ast, \bar{\mu}).
  \]
  Since one can compose a transcendence basis of $(\bar{x}^\ast, \bar{\mu})$ over $\C\langle \bar{y}^\ast, \bar{u}^\ast \rangle$ by first taking a transcendence basis of $\bar{\mu}$ over $\C\langle \bar{y}^\ast, \bar{u}^\ast \rangle$ which is of cardinality $\df(\Sigma)$ and then taking a transcendence basis of $\bar{x}^\ast$ over $\C(\bar{\mu})\langle \bar{y}^\ast, \bar{u}^\ast \rangle$, we have
  \[
  \df(\Sigma) = \trdeg_{\C\langle \bar{y}^\ast, \bar{u}^\ast \rangle} \C(\bar{\mu}) = \trdeg_{\C\langle \bar{y}^\ast, \bar{u}^\ast \rangle} \C(\bar{x}^\ast, \bar{\mu}) - \trdeg_{\C(\bar{\mu})\langle \bar{y}^\ast, \bar{u}^\ast \rangle} \C(\bar{x}^\ast, \bar{\mu}) = A - B.
  \]
  Hence, if both $A$ and $B$ have been computed correctly, the returned result is correct. 
  In the probabilistic version, the probability of at least one of them being incorrect does not exceed
  \[
  2 \left(1 - \tfrac{1 + p}{2}\right) = 1 - p.\qedhere
  \]
\end{proof}

\begin{proposition}\label{prop:complexity}
  If Algorithm~\ref{alg:defect} uses the second algorithm from Theorem~\ref{thm:sed}, then Algorithm~\ref{alg:num_exp} is a probabilistic Monte Carlo algorithm of polynomial arithmetic complexity with respect to the complexity of $\Sigma$.
\end{proposition}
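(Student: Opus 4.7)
The plan is to trace the complexity through the nested structure of Algorithms~\ref{alg:num_exp} and~\ref{alg:defect}, reducing everything to the polynomial bound of the Monte Carlo subroutine from Theorem~\ref{thm:sed}(\ref{part:2}). First I would bound the complexity of the replica $\Sigma_r$: a straight-line program for $\Sigma_r$ is obtained by taking $r$ parallel copies of the straight-line program for $\Sigma$ (with fresh state/input/output variables but shared parameters), so its length and variable count are at most $r$ times those of $\Sigma$, up to a constant factor. Since Algorithm~\ref{alg:num_exp} only ever considers $\Sigma_r$ for $r \leqslant \ell + 1$, and $\ell$ is itself bounded by the total number of variables of $\Sigma$ and hence by its complexity, each $\Sigma_r$ encountered has complexity polynomial in the complexity of $\Sigma$.

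Next I would handle the two transformations inside Algorithm~\ref{alg:defect}. The model $\Sigma'$ is obtained by adding $\ell$ trivial equations $\mu_i' = 0$ and reclassifying parameters as state variables; $\Sigma''$ is obtained from $\Sigma'$ by copying each such state as an output. Both transformations preserve the straight-line program for the right-hand sides and add at most $O(\ell)$ variables/equations, so the complexities of $\Sigma'$ and $\Sigma''$ remain polynomial in the complexity of $\Sigma_r$, and hence in the complexity of $\Sigma$. By Theorem~\ref{thm:sed}(\ref{part:2}), each of the two calls to the Monte Carlo subroutine on $\Sigma'$ and $\Sigma''$ runs in arithmetic complexity polynomial in its input size; so one execution of Algorithm~\ref{alg:defect} is polynomial in the complexity of $\Sigma$.

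Finally, the outer loop of Algorithm~\ref{alg:num_exp} performs at most $\ell + 1$ iterations, each calling Algorithm~\ref{alg:defect} once. Summing a polynomial quantity $O(\ell+1)$ times yields a total arithmetic complexity polynomial in the complexity of $\Sigma$. The Monte Carlo confidence composes exactly as already verified in the correctness lemmas: the outer loop passes confidence $1 - \tfrac{1-p}{\ell}$ to each inner call, which in turn splits it as $\tfrac{1+p'}{2}$ across its two subroutine calls, and the union bound yields overall success probability at least~$p$.

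The main obstacle is really just the bookkeeping in the first step: one must verify that replication does not blow up the straight-line program super-linearly in $r$, and that $\ell$ itself is controlled by the input complexity so that the $r \leqslant \ell+1$ bound is useful. Once those observations are in place, everything else is a routine composition of polynomial bounds.
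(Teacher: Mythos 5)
Your proposal is correct and follows essentially the same route as the paper's proof: bound the complexity of each replica $\Sigma_i$ by $(\ell+1)$ times that of $\Sigma$, observe that the transformations producing $\Sigma'$ and $\Sigma''$ keep the complexity polynomial, invoke Theorem~\ref{thm:sed}(\ref{part:2}) for each subroutine call, and multiply by the at most $\ell+1$ loop iterations. Your additional bookkeeping (that $\ell$ is itself bounded by the complexity of $\Sigma$, and that replication is linear in $r$) makes explicit what the paper leaves implicit, but the argument is the same.
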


\begin{proof}
  First we will prove that the arithmetic complexity of Algorithm~\ref{alg:defect} is polynomial.
  The first and the last steps have polynomial complexity.
  The fact that the arithmetic complexity of the second step is polynomial follows from Theorem~\ref{thm:sed} and the fact that the complexities of $\Sigma'$ and $\Sigma''$ are polynomial in the complexity of $\Sigma$.
  
  Let $\ell$ be the number of parameters.
  Since $d_i \leqslant \ell$ for every $i \geqslant 0$ except for the last and $d_0 > d_1 > d_2 > \ldots$, the counter $i$ in Algorithm~\ref{alg:num_exp} will not exceed $\ell + 1$.
  Then there will be at most $\ell + 1$ runs of Algorithm~\ref{alg:defect}, and each run will be on a system of complexity at most $\ell + 1$ times the complexity of $\Sigma$.
  Therefore, the total arithmetic complexity will be still polynomial in the complexity of $\Sigma$.
\end{proof}

%%%%%%%%%%%%%%%%%%%%%%%%%%%%%%%%%%%%%%%%%%%%%%%%%%%%%%%%

\subsection{Algorithm: implementation and examples}\label{sec:impl_ex}

We  implemented the probabilistic version of Algorithm~\ref{alg:num_exp} for computing the bound from Theorem~\ref{thm:num_exp} in Julia language using Oscar and Nemo libraries~\cite{Nemo} together with a version of the algorithm by Sedoglavic from Theorem~\ref{thm:sed}.
The code and examples described below are available at~\url{https://github.com/pogudingleb/ExperimentsBound}.

Below we will demonstrate the algorithm and the bound on several examples and compare with the algorithm presented in~\cite{allident} (see Example~\ref{ex:comparison} and Table~\ref{tab:comparison}).
All of the runtimes reported below have been measured on a laptop with 1.6 GHz processor (Intel Core i5) and 16GB RAM.
All of the computations reported below have been performed with the correctness probability of $99\%$ (see the specification of Algorithm~\ref{alg:num_exp}).

\begin{remark}\label{rem:sian}
For some of the examples below, we were able to obtain the exact values of $\NEG(\Sigma)$ using SIAN~\cite{sian}.
SIAN is software that can check single experiment identifiability of any fixed function of parameters.
We used it as follows:
\begin{enumerate}
  \item If, for some $r$, all  parameters of $\Sigma_r$ are globally identifiable, then $\NEG(\Sigma) \leqslant r$.
  \item If, for some $r$, the parameter identifiability of $\Sigma_r$ and $\Sigma_{r + 1}$ are not the same, then $\NEG(\Sigma) \geqslant r + 1$.
\end{enumerate}
\end{remark}

\begin{example}[The counterexample from Section~\ref{sec:counterexample}]\label{ex:counter}
  In Section~\ref{sec:counterexample}, we have shown that $\NEG(\Sigma) > 1$ for the following system $\Sigma$:
  \[
  \begin{cases}
           x_1' = 0,\\
           x_2' = x_1x_2 + \mu_1 x_1 + \mu_2,\\
           y = x_2.
    \end{cases}
  \]
  Our implementation shows that $\NEL(\Sigma) = 2$ and $\NEG(\Sigma) \in \{2, 3\}$. 
  The computation took $0.01$~seconds.
  Using SIAN as describe in Remark~\ref{rem:sian}, we find that both parameters $\mu_1$ and $\mu_2$ are globally identifiable in $\Sigma_2$.
  Combining it with $\NEG(\Sigma) \in \{2, 3\}$ obtained by the algorithm, we conclude that \[\NEL(\Sigma) = \NEG(\Sigma) = 2.\]
  The same bound is given by~\cite[Theorem~21]{allident}. The computation took $0.3$~seconds.
\end{example}

\begin{example}[SEIR epidemiological model]\label{ex:SEIR}
  Consider the following SEIR model~\cite[Equation~(2.2)]{TL18}:
  \begin{equation}\label{eq:seir_original}
      \begin{cases}
          S' = -\beta \frac{SI}{N},\\
          E' = \beta\frac{SI}{N} - \nu E,\\
          I' = \nu E - \alpha I,\\
          R' = \alpha I,
      \end{cases}
  \end{equation}
  where $S$, $E$, $I$, $R$ are the numbers of individuals susceptible to the infection, exposed, infected, and recovered, respectively, and $N := S + E + I + R$ is the total population which is known.
  Note that~\eqref{eq:seir_original} implies that $N' = 0$.
  The output we will consider will be $\gamma I + \delta E$, where $\gamma$ and $\delta$ are constants corresponding to  factors such as, for instance, accuracy of the tests for the infection or the percentage of individuals going to a doctor after noticing the symptoms.
  We will assume that there are several experiments with the same values of $\alpha, \beta, \nu, \delta$ but varying values of $\gamma$ (e.g., before and after improving the accuracy of the test).
  
  To encode these assumptions into our framework, we will make $\gamma$  a constant state variable and add an output for it.
  We will also replace the equation for $R$ from~\eqref{eq:seir_original} with $N' = 0$ as $R$ does not appear in other equations other than inside $N$.
  This yields the following model $\Sigma$:
  \begin{equation}
      \begin{cases}
          S' = -\beta \frac{SI}{N},\\
          E' = \beta\frac{SI}{N} - \nu E,\\
          I' = \nu E - \alpha I,\\
          N' =
          \gamma' = 0,\\
          y_1 = \gamma I + \delta E,\\
          y_2 = \gamma,\; y_3 = N.
      \end{cases}
  \end{equation}
  Our implementation shows that $\NEL(\Sigma) = 1$ and $\NEG(\Sigma) \in \{1, 2\}$. 
  The computation took $0.05$ seconds.
  Using SIAN as described in Remark~\ref{rem:sian}, we find that all the parameters are only locally identifiable from a single experiment but become globally identifiable after
  2 experiments. 
  Therefore, $\NEG(\Sigma) = 2$, so the bound given by the algorithm is exact in this case.
  
  The program for computing a bound for the number of experiments provided in~\cite{allident} did not finish on this example after two hours of computation.
\end{example}

\begin{example}[Linear compartment models with controlled rates]\label{ex:lincomp}
Linear compartment models typically represent a set of compartments in which material is transferred  from some compartments to other compartments.  
It is also allowed to have a leakage of material from some compartments out of the system and input of material into some compartments from outside the system.

Linear compartment models are typically represented as directed graphs with edges labeled by scalar parameters (called rate constants).
An example of such a representation is shown in Figure~\ref{lincomp_ex}.
The rules of transforming such a graph into a system of ODEs are the following:
\begin{itemize}
    \item (compartments) each vertex of the graph correspond to a state variable (a compartment);
    \item (transfers) for each edge $i \to j$ with a rate constant $a_{ji}$, we add a term $a_{ji} x_i$ to the equation for $x_j'$ and a term $-a_{ji} x_i$ to the equation $x_i'$ (the corresponding terms for the edge $1 \to 2$ on Figure~\ref{lincomp_ex} are underlined in the system);
    \item (leaks) for each edge from vertex $i$ without a target (such as an edge from vertex~1 in Figure~\ref{lincomp_ex}) with a rate constant $a_{0i}$, we add a term $-a_{0i}x_i$ to the equation for $x_i'$ (such a term for $a_{01}$ is in boldface in Figure~\ref{lincomp_ex});
    \item (outputs) outgoing edge with a small circle at the end marks  state variables taken as outputs (e.g., $x_1$ in Figure~\ref{lincomp_ex});
    \item (inputs) for an incoming edge without a source (such as the one pointing at node 3 in Figure~\ref{lincomp_ex}), we add an input variable to the corresponding compartment (added variable $u$ in the equation for $x_3'$).
\end{itemize}

\setcounter{table}{0}
\begin{table}[H]
    \centering
    \begin{tabularx}{\linewidth}{Z|Z}
\begin{tikzpicture}
         \node (1) at (1, 1) [circle, draw=black] {1};
         \node (2) at (1, 3) [circle, draw=black] {2};
         \node (3) at (3, 1) [circle, draw=black] {3};
         \node (leak) at (0, 0) [draw=none] {};
         \node (out) at (1, 0) [circle, draw=black] {};
         \node (in) at (3, 2) [draw=none] {};
         \draw[decoration={markings,mark=at position 1 with
    {\arrow[scale=1,>=stealth]{>}}},postaction={decorate}] ([xshift=1mm]1.north) -- ([xshift=1mm]2.south) node[right, pos=0.5, font=\Large] {$a_{21}$};
         \draw[decoration={markings,mark=at position 1 with
    {\arrow[scale=1,>=stealth]{>}}},postaction={decorate}] ([xshift=-1mm]2.south) -- ([xshift=-1mm]1.north) node[left, pos=0.5, font=\Large] {$a_{12}$};
          \draw[decoration={markings,mark=at position 1 with
    {\arrow[scale=1,>=stealth]{>}}},postaction={decorate}] (3) -- (1) node[above, pos=.5, font=\Large] {$a_{13}$};
           \draw[decoration={markings,mark=at position 1 with
    {\arrow[scale=1,>=stealth]{>}}},postaction={decorate}] (1) -- (leak) node[above, sloped, pos=.5, font=\Large] {$a_{01}$};
           \draw[decoration={markings,mark=at position 1 with
    {\arrow[scale=1,>=stealth]{>}}},postaction={decorate}] (1) -- (out);
            \draw[decoration={markings,mark=at position 1 with
    {\arrow[scale=1,>=stealth]{>}}},postaction={decorate}] (in) -- (3);
      \end{tikzpicture}     
      &    
        {\begin{equation*}
        \begin{cases}
          x_1' = \mathbf{-a_{01}x_1} - \underline{a_{21}x_1} + a_{12}x_2 + a_{13}x_3\\
          x_2' = \underline{a_{21} x_1} - a_{12}x_2\\
          x_3' = - a_{13}x_3 + u\\
          y = x_1
        \end{cases}\end{equation*}}
\end{tabularx}
\captionsetup{name=Figure}
\caption{Example of a linear compartment ODE model and of the corresponding graph}\label{lincomp_ex}
\end{table}

\vspace{-0.2in}
We will consider three series of models: cyclic, catenary, and mammilary. These linear compartment models and their modifications have
 recently been actively studied from the identifiability perspective~~\cite{vdH98, GMS17, GHMS2019, GOS20}. 
The corresponding graphs are given in Figure~\ref{fig:lincomp_series}.
Since these models are linear and have a single output, \cite[Theorem~1]{OPT19} together with~\cite[Theorem~21]{allident} implies that $\NEG(\Sigma) = \NEL(\Sigma) = 1$ for every such model $\Sigma$.
\setcounter{figure}{1}
\begin{figure}[H]
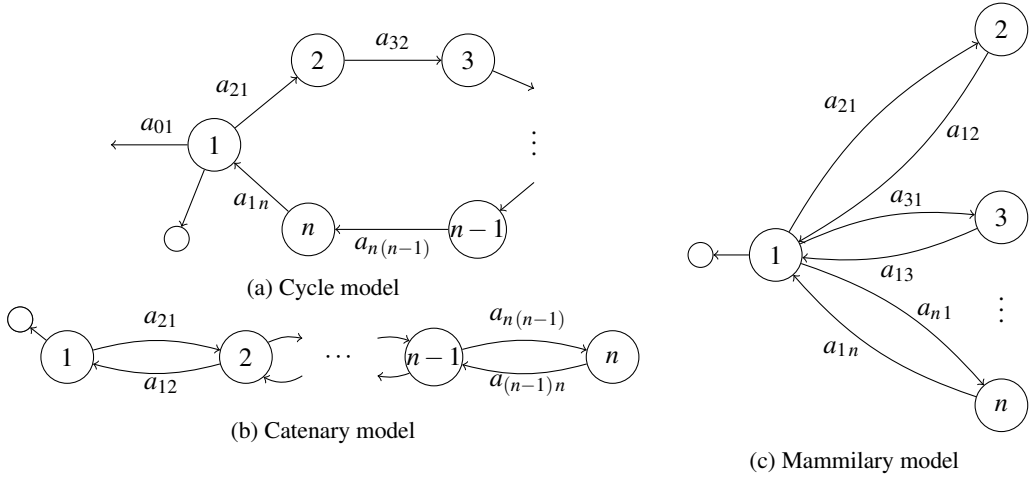

    \centering
    
    \begin{minipage}{0.56\textwidth}
    \centering
      \subcaptionbox{Cycle model\label{fig:cycle}}[\linewidth][c]{\tikzfig{lincomp_cycle}}

      \subcaptionbox{Catenary model}{\tikzfig{lincomp_cat}\vspace{-.5cm}}
    \end{minipage}
    \vspace{-.1in}
    \begin{minipage}{0.4\textwidth}
      \subcaptionbox{Mammilary model}{\tikzfig{lincomp_mam}}
    \end{minipage}
    
    \caption{ Considered classes of linear compartment models represented by their graphs}
    \label{fig:lincomp_series}
\end{figure}
\vspace{-.178in}
We will consider a modification of these models similar to~\cite[Section~III.B]{VECB}.
The modification is motivated by voltage clamp protocols used to identify parameters in ion channel models~\cite{FN09}.
Ion channel models are often modelled using Markov models, which are similar to linear compartment models but with parameters depending on input functions (see \cite[Section~III.B]{VECB} and~\cite[Section~3.2]{RS06}).
In the context of ion channel models, it may be nonrealistic to include a generic time-dependent input into the model.
Instead of this, several experiments are performed such that the parameters depend on a constant input that takes different values for different experiments~\cite[\S 5 and \S 7]{FN09}.
Such a constant input can be encoded into our framework by adding a new state variable $x_0$ satisfying $x_0' = 0$ and a new output equal to $x_0$.
We will consider the case in which all of the parameters depend linearly on the constant input $x_0$, that is:
  $a_{ij} = b_{ij} + c_{ij} x_0$
for all $i$ and $j$, where $b_{ij}$ and $c_{ij}$ are 
new parameters.
A dependence of this form was used for some of the parameters in~\cite[Supplementary Material]{CR02} and can be viewed as a linear approximation to the dependencies used in~\cite{VECB, FN09}.
For example, the cycle model (Figure~\ref{fig:cycle}) with $n = 4$ will be represented as shown on Figure~\ref{cycle4} (cf.~\cite[III.B]{VECB})
\setcounter{table}{2}
\begin{table}[H]
\def\arraystretch{.3}
    \centering
    \begin{tabularx}{\linewidth}{Z|Z}
      \ctikzfig{cycle4}
      &
        {\begin{equation*}\begin{cases}
          x_0' = 0\\
          x_1' = (b_{14} + c_{14}x_0) x_4 - (b_{21} + c_{21}x_0) x_2\\
          x_2' = (b_{21} + c_{21}x_0) x_1 - (b_{32} + c_{32}x_0) x_3\\
          x_3' = (b_{32} + c_{32}x_0) x_2 - (b_{43} + c_{43}x_0) x_4\\
          x_4' = (b_{43} + c_{43}x_0) x_3 - (b_{14} + c_{14}x_0) x_1\\
          y_1 = x_0,\; y_2 = x_1
        \end{cases}\end{equation*}}
\end{tabularx}
\captionsetup{name=Figure}
\caption{Cyclic model with $n = 4$ compartments with constant input in the reaction rates: graph (left) and ODE model (right)}\label{cycle4}
\end{table}
\vspace{-0.2in}
We have analysed models from families in Figure~\ref{fig:lincomp_series} with introduced constant input $x_0$ as described above using our implementation.
The resulting values of the bound and the runtimes are summarized in Table~\ref{table:lincomp_runtimes}.
The algorithm for computing a bound for the number of experiments described in~\cite[Remark~22]{allident} did not finish on any of the models even for $n = 3$ after two hours of computation.

\setcounter{table}{1}
\setlength{\tabcolsep}{4.5pt}
\renewcommand{\arraystretch}{1.75}
\begin{table}[H]
\centering
\begin{tabular}{|l|c|c|c|c|c|c||c|}
\hline
    \multirow{2}{*}{\textbf{Model}} & \multicolumn{2}{|c|}{$\NEL(\Sigma)$} &  \multicolumn{2}{|c|}{$\NEG(\Sigma) \in$} & \multicolumn{2}{|c|}{\textbf{runtime (sec.)}} & \textbf{max $n$ feasible} \\
     \cline{2-7}
     & $n = 3$ & $4 \leqslant n \leqslant 15$ & $n = 3$ & $4 \leqslant n \leqslant 15$ & $n = 10$ & $n = 15$ & \textbf{for SIAN${}^*$} \\
     \hline
     \hline
     Cycle & $3$ & $3$ & $\{3, 4\}$ & $\{3, 4\}$ & $9.5$ & $41$ & $4$ \\
     \hline
     Catenary & $4$ & $5$ & $\{4, 5\}$ & $\{5, 6\}$ & $45.6$ & $330$ & $3$\\
     \hline
     Mammilary & $4$ & $5$ & $\{4, 5\}$ & $\{5, 6\}$ & $45.8$ & $320$ & $3$\\
     \hline
 \end{tabular}    

    \caption{ Results and runtimes of our implementation on cyclic, catenary, and mammilary models (see Figure~\ref{fig:lincomp_series}) with a constant input acting on reaction rates as on Figure~\ref{cycle4}.\\
    ${}^\ast$: for details on SIAN usage in this case, see Remark~\ref{rem:sian_lincomp}}
    \label{table:lincomp_runtimes}
\end{table}

\begin{remark}\label{rem:sian_lincomp}
For $\NEG(\Sigma)$, we tried to refine the result to obtain the exact value using SIAN~\cite{sian} as described in Remark~\ref{rem:sian}. The results are the following:
\begin{itemize}
    \item for the cycle model, we have found that $\NEG(\Sigma) \geqslant 4$ for $n = 3, 4$ as described in the second item of Remark~\ref{rem:sian}. 
    Combined with the bound given by our implementation, we obtain $\NEG(\Sigma) = 4$ for $n = 3, 4$, so the bound is exact in this case.
    Already for $n = 5$, the computation with SIAN did not finish in 10 hours on a server and used more than 20GB of memory.
    \item for the catenary and mammilary models, a computation with SIAN showed that, for $n = 3$,
    none
    of the individual parameters was identifiable after 5 experiments.
    Therefore, we cannot use SIAN to determine the exact bound in the way we did it for the cycle model.
    For $n = 4$, the computation with SIAN did not finish in 10 hours on a server and used more than 40GB of memory.
\end{itemize}
\end{remark}

\begin{remark}For all three series of models, the output of the algorithm stabilizes from $n = 4$.
It is natural to conjecture that the result will be the same for all larger values of $n$.
It would be interesting to have a mathematical argument showing this or maybe even a formula for the number of experiments in terms of numerical characteristics of the graph of a model.
\end{remark}

\end{example}

\begin{remark} 
The same procedure of linearly perturbing the rate constants can be applied to general chemical reaction networks, which yield, in general, highly nonlinear ODEs. In this setup, we also observe that the necessary number of experiments may become larger than $1$: for example, for the perturbed version of the phosphorylation model \cite[Example~6.1]{HOPY2020}, we get $\NEL(\Sigma) =2$ and $\NEG(\Sigma)\in \{2,3\}$.
\end{remark}

%%%%%%%%%%%%%%%%%%%%%

\begin{example}[Examples from~\cite{allident}]\label{ex:comparison}
  As we  mentioned, the algorithm from~\cite[Remark~22]{allident} does not produce any bound for Examples~\ref{ex:SEIR} and~\ref{ex:lincomp} in  reasonable time.
  For the sake of comparison, we run our algorithm on the examples collected in~\cite[Section~5]{allident}.
  The comparison is reported in Table~\ref{tab:comparison}, in which we also included the above examples for completeness.
  
  \begin{table}[H]
  \setlength{\tabcolsep}{2.2pt}
\renewcommand{\arraystretch}{1.3}
  \centering
  \begin{tabular}{|l|c|c|c|c|}
  \hline
       \multirow{2}{*}{\textbf{Model}} & \multicolumn{2}{|c|}{\textbf{Approach from~\cite{allident}}} & \multicolumn{2}{|c|}{\textbf{Our algorithm}}  \\
    \cline{2-5}
        & \textbf{time (sec.)} & \textbf{bound} & \textbf{time (sec.)} & \textbf{bound} \\
    \hline
    \hline
       Lotka-Volterra w/control~\cite[Section~5.1]{allident} & $0.3$ & $2$ & $0.005$ & $2$ \\
    \hline
         Slow-fast ambiguity~\cite[Section~5.2]{allident} & $0.37$ & $2$ & $0.024$ & $2$ \\
    \hline
         Lotka-Volterra w/mixture~\cite[Section~5.3]{allident} & $15$ & $4^\ast$ & $0.01$ & $2$ \\
    \hline
         SEIR - prevalence~\cite[Section~5.4]{allident} & $1$ & $1$ & $0.021$ & $2$ \\
    \hline
         SEIR - incidence~\cite[Section~5.4]{allident} & $340$ & $1$ & $0.032$ & $2$ \\
    \hline
    \hline
         Counterexample from Section~\ref{sec:counterexample} (Example~\ref{ex:counter}) & $0.3$ & $2$ & $0.01$ & $3$ \\
    \hline
         SEIR w/ mixture (Example~\ref{ex:SEIR}) & $> 2\text{ h.}$ & N/A & $0.05$ & $2$ \\
    \hline
         Cycle for $n = 3^{\ast\ast}$ (Example~\ref{ex:lincomp}) & $> 2\text{ h.}$ & N/A & $0.3$ & $4$ \\
    \hline
         Catenary for $n = 3^{\ast\ast}$ (Example~\ref{ex:lincomp}) & $> 2\text{ h.}$ & N/A & $0.6$ & $5$ \\
    \hline
        Mammilary for $n = 3^{\ast\ast}$ (Example~\ref{ex:lincomp}) & $> 2\text{ h.}$ & N/A & $0.6$ & $5$ \\
    \hline
  \end{tabular}
      \caption{Comparison of bounds for $\NEG$ and runtimes with~\cite{allident}.\\
      ${}^{\ast}$:  obtained by a modification of the method, see~\cite[Section~5.3]{allident}; \cite[Theorem~21]{allident} gives $35$\\
      ${}^{\ast\ast}$: our algorithm from the present paper can tackle larger $n$ as well, see Table~\ref{table:lincomp_runtimes}}
      \label{tab:comparison}
  \end{table}
\end{example}

\section{Model theory and identifiability}\label{sec:model_theory}

The goal of this section is to explain the connections between identifiability and model theory and give an idea how the algebraic arguments from the preceding sections have been inspired and informed by model-theoretic considerations.
The section is structured as follows.
Section~\ref{sec:mt_setup} introduces some fundamental notions of model theory in the context of differential fields and explains their close relations with the concept of identifiability; the section culminates in the identifiability--model theory dictionary in Table~\ref{tab:dictionary}.
Section~\ref{sec:cemodeltheory} is about some model theory of differential fields behind \eqref{eq:counterexsys} and Conclusion~\ref{thm:example}. Specifically we describe canonical bases of types over constant fields.
In Section~\ref{sec:mtpf}, we give model-theoretic proofs of the key ingredients of the proof of Theorem~\ref{thm:num_exp}, Propositions~\ref{prop:existence_N} and~\ref{prop:bound_N}.
Some of the ideas from these proofs were crucial in the algebraic proof of Theorem~\ref{thm:num_exp}.

 Model theory is a kind of abstract algebra, which gives a common environment and common tools for studying algebraic structures such as group, fields, and fields equipped with derivations or automorphisms. Among these tools are {\em canonical bases} coming from an area of model theory called stability theory, and which implicitly play an important role in this paper.

\subsection{Setup}\label{sec:mt_setup}
We will use basic notions from model theory (such as language, structure, theory, and model). The reader is referred to~\cite[Chapter~1]{Marker_book} for additional details.
In this section, we will introduce relevant notions from model theory. We will specialize some of them for simplicity to the case of differential fields  and explain their relation to the identifiability problem.
Further details can be found in~\cite{MarkerMTDF} (also~\cite{Marker_book, Pillay_book}).
The correspondence between  notions from identifiability and model theory is summarized in Table~\ref{tab:dictionary}.
\begin{table}[H]
\setlength{\tabcolsep}{5pt}
    \centering
    \def\arraystretch{1.2}
    \begin{tabular}{|l|l|}
    \hline
         \textbf{Identifiability} & \textbf{Model theory} \\
         \hline
         \hline
         \begin{tabular}{l}Solution of~\eqref{eq:sigma}\end{tabular} & \begin{tabular}{l}Realization of the  formula~\eqref{eq:sigma} \end{tabular} \\
         \hline
         \begin{tabular}{l}Generic solution $(\bar{x}^\ast, \bar{y}^\ast, \bar{u}^\ast)$\\ of~\eqref{eq:sigma}\end{tabular} & \begin{tabular}{l}Realization of the generic type defined by~\eqref{eq:sigma}\\ (Example~\ref{ex:type_generic})\end{tabular} \\
         \hline
         \begin{tabular}{l}Identifiability (Definition~\ref{def:ident_si})\end{tabular} & \begin{tabular}{l}Definability over $(\bar{y}^\ast, \bar{u}^\ast)$ (Example~\ref{ex:ident_si})\end{tabular}\\
         \hline
         \begin{tabular}{l}Multi-experiment identifiability\\ (Definition~\ref{def:ident:multi})\end{tabular} & \begin{tabular}{l}
              Definability over some finite number of independent\\ realizations of $\tp((\bar{y}^\ast, \bar{u}^\ast) / \C(\bar{\mu}))$ (Example~\ref{ex:ident_mult})
         \end{tabular}\\
         \hline
         \begin{tabular}{l}Field of multi-experiment\\ identifiable functions\end{tabular} &
         
         \begin{tabular}{l}
              The canonical base $\cb((\bar{y}^\ast, \bar{u}^\ast) / \C(\bar{\mu}))$\\ (Example~\ref{ex:ident_me} and Propositions~\ref{prop:existence_N} and~\ref{prop:existence_N_mt})
         \end{tabular}\\
         \hline
    \end{tabular}
    \caption{ Identifiability - Model theory dictionary}
    \label{tab:dictionary}
\end{table}

One expresses system~\eqref{eq:sigma} by a formula in the appropriate language (i.e. the conjunction of the system of finitely many equations) as defined below.

\begin{our_definition}[Extensions of languages]
  For a language $\mathcal{L}$, a structure $M$ in $\mathcal{L}$, and a subset $A \subseteq M$, let $\mathcal{L}_A$ denote the extension of language $\mathcal{L}$ by adding a constant for each element of~$A$. 
\end{our_definition}
The reader should be careful to distinguish ``constants" in the sense of constant symbols in logic from constants in the sense of elements of a differential field on which the derivation is zero. 

\begin{example}[Language of differential fields]
  We  work in the language of differential fields 
   \[
   \mathcal{L}_{DF} := \{+, \cdot, ', 0, 1\}.
   \]
   This language allows one to express differential equations with rational coefficients (not arbitrary complex numbers as in~\eqref{eq:sigma}), and this is not sufficient to write a system of the form~\eqref{eq:sigma}.
   However, every equation in~\eqref{eq:sigma} is a formula in $\mathcal{L}_{DF, \mathbb{C}}$ in variables $\bar{x}, \bar{y}, \bar{u}, \bar{\mu}$.
\end{example}

We will work not in the theory of differential fields but in the theory of differentially closed fields. This ensures that the equations of interest have sufficiently many solutions.

\begin{our_definition}[{Differentially closed fields, \cite[Definition~4.3.29]{Marker_book}}]
  A differential field $K$ is called \emph{differentially closed} if, for all differential polynomials $f, g \in K\{x\} \setminus \{0\}$ with $\ord f > \ord g$, there is $a \in K$ such that $f(a) = 0$ and $g(a) \neq 0$ (for $p \in K\setminus \{0\}$, we define $\ord p := -1$).
  
  These fields share many properties with algebraically closed fields such as the Nullstellensatz: if a system of equations over $K$ has a solution in some extension of $K$, then it has a solution in $K$ as well~\cite[Corollary~2.6]{MarkerMTDF}.
  The property of being differentially closed can be written as a list of axioms in $\mathcal{L}_{DF}$. We denote the resulting theory of differentially closed fields by $\DCF_0$.
\end{our_definition}

Once we have put the system~\eqref{eq:sigma} into the model-theoretic context, we would like to be able to talk about its solutions and generic solutions.
This is done using the language of formulas and types.

\begin{our_definition}[{Types, \cite[Definition~4.1.1]{Marker_book}}]\label{def:types}
Let $M$ be an $\mathcal{L}$-structure and $A$ a subset of 
$M$. 
Then an $n$-{\em type} over $A$, relative to the structure $M$, is a set $\Phi$ of formulas in $\mathcal{L}_A$ with free variables $x_1,\ldots,x_n$ such that there exists an $\mathcal{L}_A$-structure $N$ containing $M$ (could be equal to $M$) such that
\begin{itemize}
    \item all $\mathcal{L}_M$-sentences true in $M$ are also true in $N$ (such $N$ is called \emph{an elementary extension} and we write $M\prec N$);
    \item there exist $a_1, \ldots, a_n \in N$ satisfying all the formulas in $\Phi$.
\end{itemize}
   Such a set or tuple $a_1, \ldots, a_n$ is called \emph{a realization of the type}. 
\end{our_definition}

\begin{example}[Important classes of types]\label{ex:types}
In this paper, we will encounter mostly types of the following forms:
\begin{itemize}
    \item Types defined by finitely many formulas (that is, $|\Phi| < \infty$ in Definition~\ref{def:types}).
    For example,  the system~\eqref{eq:sigma} (or any other system of differential-algebraic equations) defines such a type over any differential field containing at least one solution of~\eqref{eq:sigma}.
    Using conjunction, every such type can be defined by a single formula.
    \item Let $M$ be a model, $A \subset M$ be any subset, and $\bar{a}$ be a tuple from $M$.
    Then $\tp_M(\bar{a} / A)$ denotes the set of all formulas in $\mathcal{L}_A$ satisfied by $\bar{a}$ in $M$.
    Note that if $M\prec N$, then $\tp_N(\bar{a} / A) = \tp_M(\bar{a} / A)$. 
    \item
    Let $M$ be an $\mathcal{L}$-structure, $A$ a subset of $M$, and $p$ an $n$-type over $A$  relative to the structure $M$. Then $p$ is complete if, for every $\phi(\bar x)$ in $\mathcal{L}_A$, either $\phi$ or $\neg\phi$ is in $p$.
   The complete $n$-types over $A$ relative to $M$ are precisely of the form $\tp_N(\bar{a} / A)$ for $N$ an elementary extension of $M$.
    
For any automorphism $f$ of $M$, we define a map on the set of complete types over $M$ by 
    applying $f$ to
    the formulas contained in the types.
\end{itemize}
\end{example}

\begin{remark}[Types in differentially closed field]\label{rem:qe}
The theory $\DCF_0$ admits quantifier elimination~\cite[Theorem~4.3.32]{Marker_book}, that is, for every formula $\phi$, there is a quantifier-free formula equivalent to  
  $\phi$ 
  in $\DCF_0$.
   Therefore, every type can be defined by a set of quantifier-free formulas.
   
   In particular, if $K$ and $L$ are differentially closed fields and $A, \bar{a} \subset K\cap L$, then the types $\tp_K(\bar{a} / A)$ and $\tp_L(\bar{a} / A)$ are the same.
   Hence, working in the context of differentially closed fields, we will write simply $\tp(\bar{a} / A)$ without specifying the ambient differentially closed field.
\end{remark}

\begin{example}\label{ex:type_generic}
  Consider a generic solution $(\bar{x}^\ast, \bar{y}^\ast, \bar{u}^\ast)$ of~\eqref{eq:sigma} (see Definition~\ref{def:generic_solution}) in a differentially closed field $K \supset \C(\bar{\mu})$.
  Then we will call
  $\tp((\bar{x}^\ast, \bar{y}^\ast, \bar{u}^\ast) / \C(\bar{\mu}))$ \emph{the type of a generic solution of~\eqref{eq:sigma}}.
  This type contains all equations~\eqref{eq:sigma}, but also, for example, 
 any inequation (say, $x_1' \neq 0$) that is true for at least one solution of~\eqref{eq:sigma} and thus must be true for a generic one.
\end{example}

Model theory provides tools to construct large enough differential fields containing realizations of types of generic solutions of all the systems of interest (and in many copies so that we can talk about multiple experiments as well).

\begin{our_definition}[{Saturation, \cite[Definition~4.3.1]{Marker_book}}]\label{def:saturated}
  Let $\kappa$ be an infinite cardinal.
  A model $M$ of theory $T$ is called \emph{$\kappa$-saturated} if every complete type $\Phi$ such that 
  \[
  |\{ m\in M \mid m \text{ appears in } \Phi \}| < \kappa
  \]
  has a realization in $M$.
  $M$ is called \emph{saturated} if it is $|M|$-saturated.
\end{our_definition}

\begin{remark}\label{rem:types_mapping}
  If $M$ is a model of theory $T$ and $A \subset M$ is a subset and
  $M$ is saturated with $|M| > \max(|A|, |T|)$, then, for every $\bar{a}, \bar{b}$ in $M$, 
  \[
    \tp_M(\bar{a} / A) = \tp_M(\bar{b} / A) \iff \exists\; \text{automorphism }\alpha \colon M \to M \text{ such that } \alpha(\bar{a}) = \bar{b} \text{ and } \alpha|_{A} = \operatorname{id}
    \] 
    (see~\cite[Propositions~4.2.13 and~4.3.3]{Marker_book}).
\end{remark}

Now we define identifiability in the language of model theory.

\begin{our_definition}[{Definability, \cite[Definition 1.3.1]{Marker_book}}]
  A subset $X \subset M^n$ of a structure $M$ in a language $\mathcal{L}$ is called \emph{definable over} a subset $A \subset M$ if there exists a first-order formula $\phi(x_1, \ldots, x_n)$ in $\mathcal{L}_A$ such that
  \[
  (a_1, \ldots, a_n) \in X \iff \phi(a_1, \ldots, a_n) \text{ is true in $M$.}
  \]
\end{our_definition}

\begin{example}\label{ex:ident_si}
  Let $K$ be a differential field over a differential subfield $k_0$, and $\bar{a}$ and $\bar{b}$ are tuples of elements of $K$.
 It follows from \cite[Proposition~1.3.5]{Marker_book} and \cite[Theorem~2.6]{Kaplansky}
  that 
  \[
  \bar{a} \text{ definable over } \bar{b} \text{ in } \mathcal{L}_{DF, k_0} \iff \bar{a} \in k_0\langle \bar{b} \rangle
  \]
  (where, for a set $A$, $\overline{a} \in A$ means $a_i \in A$ for each $i \leqslant \operatorname{length}(\overline{a})$).
  Comparing this with Definition~\ref{def:ident_si} , we see that $h(\bar{\mu}) \in \mathbb{C}(\bar{\mu})$ is \emph{identifiable} if and only if it is $\mathcal{L}_{DF, \mathbb{C}}$-\emph{definable} over $(\bar{y}^\ast, \bar{u}^\ast)$ for every generic solution $(\bar{x}^\ast, \bar{y}^\ast, \bar{u}^\ast)$ of ~\eqref{eq:sigma}.
\end{example}

\begin{remark}\label{rem:definability}
  For a saturated model $M$, definability can be restated in terms of automorphisms~\cite[Proposition 4.3.25]{Marker_book}: for $\bar{a}, \bar{b} \in M$, $\bar{a}$ is definable over $\bar{b}$ if and only if
  \[
    \forall \text{ automorphism } \alpha \colon M \to M \quad \alpha(\bar{b}) = \bar{b} \implies \alpha(\bar{a}) = \bar{a}.
  \]
  Informally, this can be stated as \emph{if $\bar{b}$ is fixed, then $\bar{a}$ is also fixed}.
  Syntactically, this is very similar to the analytic definition of identifiability~\cite[Definition~2.5]{HOPY2020}.
  This partially explains why model theoretic tools were used in proving the equivalence~\cite[Proposition~3.4]{HOPY2020} of the analytic definition and Definition~\ref{def:ident_si}.
\end{remark}

In order to define multi-experiment identifiability in model-theoretic terms, we will define the notion of independence.

\begin{our_definition}[Stationarity, nonforking, and independence]\label{def:independence}
  Let $k$ be a differential subfield of a differentially closed field $K$ and let $n$ be a positive integer.
  Let $\bar{a}$ be $n$-tuple of elements from $K$ and $\bar{x}$ denote $n$-tuple of differential variables.
  \begin{itemize}
  \item Recall from Notation~\ref{not:tuples} that the vanishing ideal $\mathcal{I}_{k}({\bar a})$ of ${\bar a}$ over $k$ is $\{P\in k\{\bar x\}: P(\bar a) = 0\}$.   
  Note that $\mathcal{I}_{k}({\bar a})$ depends only on $p := \tp({\bar a}/k)$.
  Moreover, by quantifier elimination of $\DCF_{0}$ (see Remark~\ref{rem:qe}), it also determines $\tp({\bar a}/k)$, so we may write it as $\mathcal{I}(p)$.
  \item Let $L$ be a differential field with $k \subset L \subset K$.  
  We say that ${\bar a}$ is \emph{independent from $L$ over $k$} if $\mathcal{I}_{L}({\bar a})$ is a prime component of $\mathcal{I}_{k}({\bar a})\otimes_{k}L$.  
  We also express this by saying that $\tp({\bar a}/L)$ \emph{does not fork over $k$}, or that $\tp({\bar a}/L)$ is a \emph{nonforking extension of $\tp({\bar a}/k)$}.
  \item We say that $\tp({\bar a}/k)$ is \emph{stationary} if  $\mathcal{I}_{k}({\bar a})$ is ``absolutely prime", namely for each $L \supset k$, $\mathcal{I}_{k}({\bar a})\otimes_{k}L$ is prime. 
  It is enough to require this for $L = K$. 
   \item If ${\bar b}$ is another finite tuple from $K$, we say that ${\bar a}$ and ${\bar b}$ are \emph{independent} over $k$ if ${\bar b}$ is independent from $k\langle {\bar a}\rangle$ over $k$.
   A sequence of tuples $\bar{a}_1, \bar{a}_2, \ldots$ is called \emph{independent over $k$} if, for every $i \geqslant 1$, $a_{i+1}$ is independent from $k\langle{\bar a}_{1},...,{\bar a}_{i}\rangle$ over $k$. 
   \item In general, given subsets $A\subseteq B$ of $K$, we say that ${\bar a}$ is \emph{independent from $B$ over $A$}, or \emph{$\tp({\bar a}/B)$ does not fork over $A$}, if ${\bar a}$ is independent from $L_{2}$ over $L_{1}$ where $L_{1}$ and $L_2$ are the differential fields generated by $A$ and $B$, respectively.
\end{itemize}
\end{our_definition}

\begin{example}[Generic solution of~\eqref{eq:sigma} is stationary]\label{ex:stationary}
   Let $\bar{a} := (\bar{x}^\ast, \bar{y}^\ast, \bar{u}^\ast)$ be a generic solution of~\eqref{eq:sigma} (see Definition~\ref{def:generic_solution}).
   Then, by the definition, $\I_{\mathbb{C}(\bar{\mu})}(\bar{a}) = I_{\Sigma}$.
   \cite[Proof of Lemma~3.2]{HOPY2020} implies that the ideal $I_\Sigma$ is prime and remains prime under any field extension.
   Therefore, $\tp(\bar{a} / \mathbb{C}(\bar{\mu}))$ is stationary.
\end{example}

\begin{remark}[Some properties of independence and forking]\label{rem:some_prop}
  In this remark, we use the notation from Definition~\ref{def:independence}.
  \begin{enumerate}[label=(\arabic*)]
      \item 
      One can show that finite tuples ${\bar a}$, ${\bar b}$ are independent over $k$ if and only if the ideal $\mathcal{I}_{k}({\bar a}, {\bar b}) \subseteq k\{{\bar x}, {\bar y}\}$ is a prime component of the ideal $I$ of $k\{{\bar x}, {\bar y}\}$ generated by $\mathcal{I}_{k}({\bar a})$ and  $\mathcal{I}_{k}({\bar b})$.   
      Moreover, if both $\tp({\bar a}/k)$ and $\tp({\bar b}/k)$ are stationary, $I$ is itself prime.
      \item\label{rem715item2} Using the fact that algebraic (in)dependence is invariant under  extension of scalars, one can show that ${\bar a}$ is independent from $L \supset k$ over $k$ if and only if, for every $m$, we have
      \[
      \trdeg_{k} k\left({\bar a}, {\bar a}',\ldots,{\bar a}^{(m)}\right) = \trdeg_{L} L\left({\bar a}, {\bar a}',\ldots,{\bar a}^{(m)}\right).
      \]
      Together with~\cite[Proposition~1.16]{Anand_notes}, this implies that the definition of independence in $\DCF_0$ we gave agrees with the general model-theoretic one (as e.g., in~\cite[\S 2.2,  page~28]{Pillay_book}).
      \item\label{rem715item3} The definition of stationarity implies that, for every stationary type $p$ over $k$ and every differential field $L \supset k$, there is a unique complete type $q$ that extends $p$ and that does not fork over $k$. Such a type $q$
      will be referred to as \emph{the} nonforking extension of $p$. Note that  the type $q$ is again stationary.
      One can show that the converse (the uniqueness of nonforking extension of $p$ implies the stationarity of $p$) is also true by using the characterization of independence from the previous item and the fact that, after the extension of scalars, an irreducible variety becomes equidimensional.
      
  \end{enumerate}
\end{remark}

\begin{example}
  Consider the differential field $k = \Q(t)$ with respect to the derivation $\frac{\operatorname{d}}{\operatorname{dt}}$ and a saturated model $K \supset k$ of $\DCF_0$.
  Every formula in the type $p := \tp(t / \Q)$  is implied by the
  single formula $x' = 1$.
  Then type $q := \tp(t / \Q(t))$ is an extension of $p$, and it contains a new formula $x = t$, which is not implied by $x'=1$.
  We have
  \[
    \trdeg_{\Q} t = 1 \neq 0 = \trdeg_{\Q(t)} t,
  \]
  so the extension of $p$ by $q$ is forking. 
  
  Also, from the differential equations theory, we know that
  the general solution of $x' = 1$ is of the form $x = t + c$, where $c$ is a constant.
  So we can construct a nonforking extension of $p$ to $\Q(t)$ as $\tp((t + c) / \Q(t))$, where $c \in K$ is a transcendental constant (exists because $K$ is saturated). 
\end{example}

\begin{example}[Multi-experiment identifiability via independence]\label{ex:ident_mult}
   Let $\bar{a} = (\bar{x}^\ast_1, \bar{y}^\ast_1, \bar{u}^\ast_1)$ and $\bar{b} = (\bar{x}^\ast_2, \bar{y}^\ast_2, \bar{u}^\ast_2)$,
   where $(\bar{x}^\ast_1, \bar{x}^\ast_2, \bar{y}^\ast_1, \bar{y}^\ast_2, \bar{u}^\ast_1, \bar{u}^\ast_2)$ is a generic solution of $\Sigma_2$ (see Definition~\ref{def:ident:multi}).
   By the definition of $I_{\Sigma_2}$, it is generated by two copies of $I_\Sigma$, so $\bar{a}$ and $\bar{b}$ are independent over $\mathbb{C}(\bar{\mu})$.
   Moreover, $\bar{a}$ and $\bar{b}$ are \emph{independent realizations} of the type of a generic solution of~\eqref{eq:sigma} (see Example~\ref{ex:type_generic}).
   Combining this with Example~\ref{ex:ident_si}, we have that
   \begin{displayquote}
   \emph{$h(\bar{\mu}) \in \mathbb{C}(\bar{\mu})$ is multi-experiment identifiable if and only if it is definable in $\mathcal{L}_{DF, \mathbb{C}}$ over some finite number of independent realizations of $\tp((\bar{y}^\ast, \bar{u}^\ast) / \C(\bar{\mu}))$, where $(\bar{x}^\ast, \bar{y}^\ast, \bar{u}^\ast)$ is a generic solution of~\eqref{eq:sigma}.}
   \end{displayquote}
\end{example}

Finally, it has been shown in~\cite[Theorem~19]{allident} that the field of multi-experiment identifiable functions coincides with the field of definition of the ideal of input-output relations.
Any set of generators of the field of definition is called a canonical base in model theory:

\begin{our_definition}[{Canonical base, \cite[Definition~8.2.2]{Marker_book}}]\label{def:canonical}
  Let $M$ be a saturated model of the theory $\DCF_0$ (that is, large enough differentially closed field, see Definition~\ref{def:saturated}), and $p$ be a complete type over $M$.
  Then a set $A \subset M$ is called \emph{a canonical base}  of $p$ if and only if
  \[
  \forall \text{ automorphism } \alpha \colon M \to M \quad \alpha(p) = p \iff \alpha|_A = \operatorname{id},
  \]
  where the automorphism acts on the type by acting on the formulas defining the type (which are defined over $M$, see also Example~\ref{ex:types}).
  In particular, an automorphism fixes a complete type if it leaves the corresponding set of formulas invariant.
  
   Every canonical base of a complete type $p$ generates the same differential field over $k$~\cite[Lemma~8.2.4]{Marker_book}.
  This field will be denoted by $\cb(p)$ and referred to as \emph{the canonical base} (see~\cite[p. 29]{Pillay_book}).
  If $k$ is a differential subfield of a differentially closed $K$ and $\bar{a}$ is a tuple from $K$ such that $\tp(\bar{a} / k)$ is stationary, then  $\cb(\bar{a} / k)$  denotes the canonical base of the nonforking extension of $\tp(a / k)$ to $K$ (see Definition~\ref{def:independence}).
\end{our_definition}

\begin{example}\label{ex:ident_me}
  In the theory of differential fields, the canonical base of stationary $\tp(\bar{a} / k)$ is the field of definition of $\mathcal{I}_k(\bar{a})$.
  Therefore,
  \cite[Theorem~19]{allident} can be rephrased as follows:
  \begin{displayquote}
  \emph{the field of multi-experimental identifiable functions  is $\cb((\bar{y}^\ast, \bar{u}^\ast) / \C(\bar{\mu}))$, where $(\bar{x}^\ast, \bar{y}^\ast, \bar{u}^\ast)$ is any generic solution of~\eqref{eq:sigma}.}
  \end{displayquote}
  This fact sounds natural if one looks at Definition~\ref{def:canonical}: the canonical base is fixed if and only if the set of all experimental outcomes for fixed generic parameters is invariant.
\end{example}

We will conclude this subsection by summarizing some properties of forking extensions which will be used in the subsequent proofs.
\begin{remark}[Properties of forking]\label{rem:forking}
We fix a differentially closed field $K \supset k_0$, its subsets $A \subseteq B \subseteq C$, and a tuple $\bar{a}$ from $K$.
\begin{enumerate}[label=(\arabic*)]
    \item \emph{(transitivity, \cite[Proposition 2.20(iii)]{Pillay_book})} $\tp(\bar{a} / C)$ does not fork over $A$ if and only if it does not fork over $B$ and $\tp(\bar{a} / B)$ does not fork over $A$.
    
    \item\label{rem720item2} \emph{(symmetry, \cite[Proposition~2.20(v)]{Pillay_book})} $\tp(\bar{a} / B)$ does not fork over $A$ if and only if, for every $\bar{b}$ from $B$, $\tp(\bar{b} / A \cup \bar{a})$ does not fork over $A$.
    
    \item\label{rem720item3} \emph{(\cite[Proposition~2.20(iv)]{Pillay_book})}
    Assume that  two distinct types $\tp(\bar{a}/B)$ and $\tp(\bar{b}/B)$ do not fork over $A$.
    Assume also that $\tp(\bar{a} / A) = \tp(\bar{b} / A)$.
    Then there exists an equivalence relation $E(x_1, x_2)$ defined over $A$ with finitely many classes such that, for every $\bar{a}^\ast$  satisfying $\tp(\bar{a} / B)$ and $\bar{b}^\ast$  satisfying $\tp(\bar{b} / B)$, we have $\neg E(\bar{a}^\ast, \bar{b}^\ast)$.
    In geometric terms, one can think of $E$ being the relation ``belong to the same component of the variety defined by $\tp(\bar{a} / A)$''.
   
    \item\label{rem720item4} \emph{(forking and canonical bases, \cite[Remark~2.26]{Pillay_book})} If $p$ is a stationary type over $A$, and let $F$ be the differential fields generated by $A$.
    Then $\cb(p)\subseteq F$ and coincides with $\cb(q)$ whenever $q$ is the nonforking extension of $p$ to a larger set $B\supseteq A$. 
    Also $\tp({\bar a}/B)$ does not fork over $A\subseteq B$ iff $\cb(\tp({\bar a}/B)$ is contained in the algebraic closure of $F$. 
\end{enumerate}
\end{remark}

%%%%%%%%%%%%%%%%%%%%%%%%%%%%%%%%%%%%%%%%%%%%%

  \subsection{Single-output model requiring more than one experiment revisited}\label{sec:cemodeltheory}
  
 In this section, we will discuss a model-theoretic construction used to find~\eqref{eq:counterexsys}.
We will work over the field $\C$, that is, in the language $\mathcal{L}_{DF, \mathbb{C}}$.
Consider a constant differential field $k = \C(\bar{\mu})$.
Let $K$ be a saturated differentially closed field containing $k$.
Let $\mathcal{C}$ denote the constants of $K$.
We will use two technical lemmas.

\begin{lemma}\label{lem:cb_over_c}
  For every tuple $\bar{a}$ from $K$, $\cb(\bar{a} / \mathcal{C}) = \mathcal{C} \cap \C\langle \bar{a} \rangle$.
\end{lemma}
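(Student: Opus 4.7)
My plan is to prove both inclusions via the automorphism-theoretic characterization of $\cb$, combined with the identification (Example~\ref{ex:ident_si}) of $\C\langle\bar{a}\rangle$ with the set of elements $\mathcal{L}_{DF,\C}$-definable over $\bar{a}$. First note that $p := \tp(\bar{a}/\mathcal{C})$ is stationary, because $\mathcal{C}$ is algebraically closed as a field and differentiation on it is trivial, so $\cb(\bar{a}/\mathcal{C}) = \cb(p)$ is well defined. Since $K$ is saturated and definable closure in $\DCF_0$ coincides with the differential field generated, Definition~\ref{def:canonical} unpacks to the statement: $c \in \cb(p)$ if and only if $\alpha(c) = c$ for every $\mathcal{L}_{DF,\C}$-automorphism $\alpha$ of $K$ satisfying $\alpha(p) = p$.

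For the inclusion $\cb(\bar{a}/\mathcal{C}) \subseteq \mathcal{C} \cap \C\langle\bar{a}\rangle$, I would identify two large collections of such automorphisms and force $\cb(p)$ into their fixed fields. Any $\alpha$ fixing $\mathcal{C}$ pointwise trivially fixes each formula of $p$ (its parameters lie in $\mathcal{C}$), so $\cb(p) \subseteq \mathcal{C}$. Any $\alpha$ fixing $\bar{a}$ pointwise sends $\mathcal{C}$ to itself setwise (differential automorphisms preserve constants), and a one-line check on a typical formula $\phi(\bar{x}, \bar{d}) \in p$ yields $\phi(\bar{x}, \alpha(\bar{d})) \in p$, hence $\alpha(p) = p$; this places $\cb(p)$ in $\C\langle\bar{a}\rangle$ via Example~\ref{ex:ident_si}.

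For the reverse inclusion, I would take $c \in \mathcal{C} \cap \C\langle\bar{a}\rangle$ and write $c = P(\bar{a})/Q(\bar{a})$ with $P, Q \in \C\{\bar{x}\}$ and $Q(\bar{a}) \neq 0$. The formula $\psi(\bar{x}) := (P(\bar{x}) - cQ(\bar{x}) = 0) \wedge (Q(\bar{x}) \neq 0)$ has its only non-$\C$ parameter equal to $c \in \mathcal{C}$ and is satisfied by $\bar{a}$, so $\psi \in p$. For any $\mathcal{L}_{DF,\C}$-automorphism $\alpha$ of $K$ with $\alpha(p) = p$, the translated formula $(P(\bar{x}) - \alpha(c) Q(\bar{x}) = 0) \wedge (Q(\bar{x}) \neq 0)$ is also in $p$, and evaluating at $\bar{a}$ forces $\alpha(c) = c$. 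Thus $c$ is fixed by every $\alpha$ preserving $p$, so $c \in \cb(p)$.

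The main conceptual obstacle is not the formula-level manipulation but setting up the automorphism characterization of $\cb(p)$ in the first place: this is where the stationarity of $p$ (ensured by $\mathcal{C}$ being algebraically closed as a field) and the saturation of $K$ enter the argument. With that characterization in hand, both inclusions reduce to routine verifications.
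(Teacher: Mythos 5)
Your proposal is correct and follows essentially the same route as the paper: stationarity from $\mathcal{C}$ being algebraically closed, the forward inclusion via automorphisms fixing $\bar{a}$ (which preserve $\mathcal{C}$ setwise and hence the type), and the reverse inclusion by observing that the defining formula $c = f(\bar{a})$ lies in the type and so pins $c$ down under any type-preserving automorphism. The only difference is cosmetic: you make the containment $\cb(\bar{a}/\mathcal{C}) \subseteq \mathcal{C}$ explicit, which the paper leaves to Remark~\ref{rem:forking}\ref{rem720item4}, and you argue with formulas where the paper argues with the ideal $\I_{\mathcal{C}}(\bar{a})$.
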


\begin{proof}
  First we observe that, since $\mathcal{C}$ is algebraically closed~\cite[Lemma~2.1]{MarkerMTDF}, the type $\tp(\bar{a} / \mathcal{C})$ is stationary due to~\cite[Chapter~1, Remark~2.25(i)]{Pillay_book}, so we can use Definition~\ref{def:canonical}.
  Consider any automorphism $\alpha$ of $K$ such that $\alpha(\bar{a}) = \bar{a}$.
  Since $\mathcal{C}$ is the field of constants of $K$, we have  $\alpha(\mathcal{C}) = \mathcal{C}$.
  Then $\alpha$ fixes $I_{\mathcal{C}}(\bar{a})$ setwise, so it fixes the nonforking extension of $\tp(\bar{a} / \mathcal{C})$ to $K$ (see Definition~\ref{def:independence}).
  By Definition~\ref{def:canonical}, we conclude that $\alpha$ fixes $\cb(\bar{a} / \mathcal{C})$.
  Thus, Remark~\ref{rem:definability} implies that $\cb(\bar{a} / \mathcal{C}) \subset \C\langle\bar{a} \rangle$.
  
  In the other direction, consider $b \in \C\langle \bar{a} \rangle \cap \mathcal{C}$.
  There exists a differential rational function $f$ over $\C$ such that $b = f(\bar{a})$.
  Therefore, the formula $b = f(\bar{x})$ belongs to $\tp(\bar{a} / \mathcal{C})$, so it belongs to its nonforking extension $p$ to $K$.
  Then any automorphism $\alpha$ of $K$ fixing $p$ fixes $\bar{b}$.
  Then Definition~\ref{def:canonical} implies that $b \in \cb(\bar{a} / \mathcal{C})$.
\end{proof}

\begin{lemma}\label{lem:cb_inheritance}
  Let $\bar{a}$ be a tuple from $K$ and $\bar{c}$ any set of generators of $\cb(\bar{a} / \mathcal{C})$  as a field over $\mathbb{C}$.
  Then
  \begin{enumerate}
      \item $\cb(\bar{a} / k) = \cb(\bar{c} / k)$;
      \item $\C\langle \bar{a} \rangle \cap k = \C(\bar{c}) \cap k$.
  \end{enumerate}
\end{lemma}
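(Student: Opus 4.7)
The plan is to establish the two parts separately, with Part 2 being a quick consequence of Lemma~\ref{lem:cb_over_c} and Part 1 requiring a more careful forking argument.

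For Part 2, the inclusion $\C(\bar{c})\cap k\subseteq\C\langle\bar{a}\rangle\cap k$ is immediate from $\bar{c}\in\C\langle\bar{a}\rangle$. For the reverse direction, $k=\C(\bar{\mu})$ is a constant subfield ($k\subseteq\mathcal{C}$), so any $h\in\C\langle\bar{a}\rangle\cap k$ lies in $\mathcal{C}\cap\C\langle\bar{a}\rangle$; Lemma~\ref{lem:cb_over_c} and the hypothesis on $\bar{c}$ identify this set with $\cb(\bar{a}/\mathcal{C})=\C(\bar{c})$, giving the containment.

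For Part 1, the easier inclusion $\cb(\bar{c}/k)\subseteq\cb(\bar{a}/k)$ is essentially formal: since $\bar{c}\in\C\langle\bar{a}\rangle$, the tuple $\bar{c}$ is $\mathcal{L}_{DF,k}$-definable from $\bar{a}$, so $\tp(\bar{a},\bar{c}/k)$ is interdefinable with $\tp(\bar{a}/k)$ and hence $\cb(\bar{a},\bar{c}/k)=\cb(\bar{a}/k)$; the monotonicity $\cb(\bar{c}/k)\subseteq\cb(\bar{a},\bar{c}/k)$ is standard. For the harder inclusion $\cb(\bar{a}/k)\subseteq\cb(\bar{c}/k)$, my strategy is to prove that $\tp(\bar{a}/k)$ does not fork over $F:=\cb(\bar{c}/k)$, which by Remark~\ref{rem:forking}(4) yields $\cb(\bar{a}/k)\subseteq F^{\alg}$. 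By symmetry of forking (Remark~\ref{rem:forking}(2)), it suffices to check that every finite tuple $\bar{k}$ from $k$ is independent from $\bar{a}$ over $F$. Since $\bar{k}$ consists of constants ($k\subseteq\mathcal{C}$), its interaction with $\bar{a}$ should be mediated entirely by the constant subfield $\C(\bar{c})=\mathcal{C}\cap\C\langle\bar{a}\rangle$: indeed, Lemma~\ref{lem:cb_over_c} combined with Remark~\ref{rem:forking}(4) provides that $\bar{a}$ is independent from $\mathcal{C}\supseteq k$ over $\C(\bar{c})$. This reduces the problem to showing $\bar{k}$ is independent from $\bar{c}$ over $F$, which holds by the very definition of $F=\cb(\bar{c}/k)$.

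The main obstacle comes in two places. First, chaining the two independencies $\bar{a}$ independent from $\mathcal{C}$ over $\C(\bar{c})$ and $\bar{c}$ independent from $k$ over $F$ does not admit a direct application of transitivity (Remark~\ref{rem:forking}(1)), since $\C(\bar{c})$ is not generally a subfield of $k$; the workaround is to work inside $\mathcal{C}$ with the compositum $F\cdot\C(\bar{c})$, invoking stationarity (Example~\ref{ex:stationary}) and symmetry to rearrange the independencies. Second, the above argument only yields $\cb(\bar{a}/k)\subseteq F^{\alg}$, whereas the lemma asserts field-level equality with $\cb(\bar{c}/k)=F$; closing this gap requires a supplementary argument comparing the two fields of definition at the ideal level, for instance by relating $\mathcal{I}_k(\bar{c})$ to the elimination of $\bar{x}$ from $\mathcal{I}_k(\bar{a})$ together with the relations $\bar{z}=g(\bar{x})$ expressing $\bar{c}=g(\bar{a})$ over $\C$.
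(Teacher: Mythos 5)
Your Part~2 and the easy inclusion $\cb(\bar{c}/k)\subseteq\cb(\bar{a}/k)$ match the paper's proof. For the hard inclusion you take a genuinely different route --- a forking/independence computation --- where the paper argues directly with automorphisms. Your forking chain can indeed be pushed through: $\tp(\bar{a}/\mathcal{C})$ does not fork over $\C(\bar{c})$ by Lemma~\ref{lem:cb_over_c} and Remark~\ref{rem:forking}(4), and combining this with the nonforking of $\tp(\bar{c}/k)$ over $F:=\cb(\bar{c}/k)$ via symmetry, monotonicity, and transitivity (all inside $\mathcal{C}$, since $F\subseteq k\subseteq\mathcal{C}$) yields that $\tp(\bar{a}/k)$ does not fork over $F$.

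The genuine gap is exactly the one you flag at the end, and it is not cosmetic. Remark~\ref{rem:forking}(4) characterizes nonforking only through containment of the canonical base in the \emph{algebraic closure} of the base, so your argument terminates at $\cb(\bar{c}/k)\subseteq\cb(\bar{a}/k)\subseteq\bigl(\cb(\bar{c}/k)\bigr)^{\alg}$, while the lemma asserts equality of fields --- and note that $\cb(\bar{c}/k)$ need not be contained in $\C(\bar{c})$ or its algebraic closure (one realization does not recover a canonical base), so there is no cheap way to pin it down. Your proposed patch, ``comparing the two fields of definition at the ideal level by eliminating $\bar{x}$ from $\mathcal{I}_k(\bar{a})$ together with $\bar{z}=g(\bar{x})$,'' is not an argument: the containment $\FD_k(\bar{a})\subseteq\FD_k(\bar{c})$ is essentially the statement being proved, it is false for an arbitrary $\C$-definable image $\bar{c}=g(\bar{a})$, and any correct version must use the defining property that $\bar{c}$ generates $\cb(\bar{a}/\mathcal{C})$ --- which your sketch of the patch never invokes. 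The paper avoids the $\alg$ loss entirely by working with automorphisms: given $\alpha$ fixing the nonforking extension of $\tp(\bar{c}/k)$, one has $\tp(\alpha(\bar{c})/k)=\tp(\bar{c}/k)$, so by saturation (Remark~\ref{rem:types_mapping}) there is a $k$-automorphism $\beta$ with $\beta(\alpha(\bar{c}))=\bar{c}$; since $\bar{c}$ generates $\cb(\bar{a}/\mathcal{C})$ and $\beta\alpha$ preserves $\mathcal{C}$ setwise, $\beta\alpha$ fixes $\tp(\bar{a}/\mathcal{C})$ and hence $\tp(\bar{a}/k)$; peeling off $\beta$ gives $\tp(\alpha(\bar{a})/k)=\tp(\bar{a}/k)$, so $\alpha$ fixes $\cb(\bar{a}/k)$ pointwise, and Definition~\ref{def:canonical} then places $\cb(\bar{a}/k)$ inside $\cb(\bar{c}/k)$ itself, with no algebraic closure. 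You would need to replace your final paragraph with an argument of this kind (or otherwise establish stationarity of $\tp(\bar{a}/F)$) to obtain the stated equality.
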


\begin{proof}
  \begin{enumerate}
      \item[]
      \item Lemma~\ref{lem:cb_over_c} implies that $\bar{c}$ is a tuple from $\C\langle \bar{a} \rangle$.
      Therefore, using Remark~\ref{rem:definability}, we obtain $\cb(\bar{c} / k) \subset \cb(\bar{a} / k)$.
      
      Consider an automorphism $\alpha$ of $K$ that fixes the nonforking extension $p$ of $\tp(\bar{c} / k)$ to $K$.
      Then $\alpha$ fixes $K \otimes_k \mathcal{I}_k(\bar{c})$ setwise, so
      $\tp(\bar{c} / k) = \tp(\alpha(\bar{c}) / k)$. Then Remark~\ref{rem:types_mapping} implies that there exists an automorphism $\beta$ of $K$ that fixes $k$ and  $\beta(\alpha(\bar{c})) = \bar{c}$.
      Then $\tp(\bar{a} / \mathcal{C}) = \tp(\beta(\alpha(\bar{a})) / \mathcal{C})$, so $\tp(\bar{a} / k) = \tp(\beta(\alpha(\bar{a})) / k)$.
      Since $\beta^{-1}$ fixes $k$, we have
      \[
        \tp(\bar{a} / k) = \tp(\beta(\alpha(\bar{a})) / k) = \tp(\beta^{-1}(\beta(\alpha(\bar{a}))) / k) = \tp(\alpha(\bar{a}) / k).
      \]
      Therefore, $\mathcal{I}_k(\bar{a}) = \mathcal{I}_k(\alpha(\bar{a}))$, so these types have the same nonforking extensions to $K$.
      Hence $\alpha$ fixes $\cb(\bar{a} / k)$. 
      Thus, $\cb(\bar{a} / k) \subset \cb(\bar{c} / k)$.
      
      \item Using Lemma~\ref{lem:cb_over_c}, since $k$ is constant, we have \[\C\langle \bar{a} \rangle \cap k = (\C\langle \bar{a} \rangle \cap \mathcal{C}) \cap k = \C(\bar{c}) \cap k.\qedhere\]
  \end{enumerate}
\end{proof}

Let $(\bar{x}^\ast, \bar{y}^\ast, \bar{u}^\ast)$ be a generic solution of an algebraic differential model $\Sigma$ as in~\eqref{eq:sigma}.
Then the desired non-equality
of the fields of SE- and ME-identifiable functions can be restated, using Examples~\ref{ex:ident_si} and~\ref{ex:ident_me}, as
\begin{equation}\label{eq:dcl_not_cb}
    \C\langle \bar{y}^\ast, \bar{u}^\ast \rangle \cap \C (\bar{\mu}) \neq \cb((\bar{y}^\ast, \bar{u}^\ast) / \C (\bar{\mu})).
\end{equation}
We will first 
 construct an example having constant dynamics and satisfying the non-equality~\eqref{eq:dcl_not_cb}
(as, for example, in~\cite[Example~2.14]{HOPY2020}). We then use Lemma~\ref{lem:cb_inheritance} to ``pack'' two output variables  of the example into a single 
 output variable while preserving~\eqref{eq:dcl_not_cb}:
\begin{enumerate}[label = (Step~\arabic*), leftmargin=15mm]
    \item Let $\bar{\mu} = (\mu_1, \mu_2)$, and introduce a constant state variable $x_1$.
    We introduce two auxiliary outputs $z_1 = x_1$ and $z_2 = \mu_1 x_1 + \mu_2$.
    The defining differential ideal of $(z_1, z_2)$ is generated by $z_1', z_2 - \mu_1 z_1 - \mu_2$, so $\cb((z_1, z_2) / \C(\bar{\mu})) = \C(\bar{\mu})$.
    On the other hand, since the automorphism of $\C(x_1, \mu_1, \mu_2)$ defined by 
    \[
      x_1 \to x_1,\ \ \mu_1 \to \mu_1 + 1,\ \ \mu_2 \to \mu_2 - x_1
    \] 
    fixes $z_1$ and $z_2$ but does not fix $\mu_1$ or
    $\mu_2$, we conclude that $\mu_1, \mu_2 \not\in \C(z_1, z_2)$.
    
    \item Now we introduce a new state variable $x_2$ satisfying $x_2' = z_1x_2 + z_2$ and set the output $y = x_2$.
    Then we have $\cb(y / \mathcal{C}) = \C(z_1, z_2)$.
    Therefore, using Lemma~\ref{lem:cb_inheritance},
    \[
      \cb(y / \C(\bar{\mu})) = \cb((z_1, z_2) / \C(\bar{\mu})) = \C(\bar{\mu}) \neq \C(z_1, z_2) \cap \C(\bar{\mu}) = \C\langle y \rangle \cap \C(\bar{\mu}).
    \]
    So we get exactly~\eqref{eq:counterexsys}.
\end{enumerate}

\begin{remark}
  Instead of $x_2' = z_1x_2 + z_2$, we could take any other equation containing $z_1$ and $z_2$ among the coefficients, for example, $x_2' = z_1x_2^2 + 2z_2x_2- 3z_1^3$.
  This would yield an example with the same property~\eqref{eq:dcl_not_cb}.
\end{remark}

\begin{remark}
  Another way to obtain an example with the property from Conclusion~\ref{thm:example} is to remove the first output ($x_0$) from any of the models in Example~\ref{ex:lincomp}.
\end{remark}

%%%%%%%%%%%%%%%%%%%%%%%%%%%%%%%%%%%%%%%%%%%%%%%%%%%%%%%%%%

\subsection{Model theory way of proving  Theorem~\ref{thm:num_exp}}\label{sec:mtpf}

In this section, we will use Notation~\ref{not:tuples} and~\ref{not:fields}. 
In particular, we work \emph{over a fixed ground differential field $k_0$} (that is, in $\mathcal{L}_{DF, k_0}$).
Therefore, all fields are assumed to be generated over $k_0$ and, in particular, whenever we write $\tp(\bar{a} / A)$, this is equivalent to $\tp(\bar{a} / k_0 \langle A \rangle)$.

\begin{proposition}[{Model theoretic reformulation of Proposition~\ref{prop:existence_N}}]\label{prop:existence_N_mt}
  Let $k \subset K$ be a differential subfield, $p$ a stationary type over $k$, and a sequence $\bar{a}_1, \bar{a}_2, \ldots$ of independent realizations of $p$ in $K$.
  Then there exists $N$ such that 
  \[
    \cb(p) \subset k_0\langle \bar{a}_1, \ldots, \bar{a}_N \rangle.
  \]
\end{proposition}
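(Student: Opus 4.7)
The plan combines two standard facts about the stability theory of $\DCF_0$: (a) in any stable theory, the canonical base of a stationary type is contained in the definable closure of any infinite Morley sequence in the type; (b) in our setting, $\cb(p)$ is a finitely generated differential field extension of $k_0$.

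First I would observe that by the hypothesis of independent realizations (Definition~\ref{def:independence}), the sequence $(\bar{a}_i)_{i \geq 1}$ is a Morley sequence in $p$ over $k$: each $\bar{a}_{n+1}$ realizes $p$ and is independent from $k\langle \bar{a}_1, \ldots, \bar{a}_n\rangle$ over $k$. I would then invoke the standard stability-theoretic result (see, e.g.,~\cite[Chapter 1]{Pillay_book}) that, for any such infinite Morley sequence $I$ in a stationary type $p$, the canonical base $\cb(p)$ lies in the definable closure of $I$. By quantifier elimination in $\DCF_0$ (Remark~\ref{rem:qe}), the definable closure of $I$ over $k_0$ in the home sort is just the differential field $k_0\langle I \rangle$, so one gets
\[
  \cb(p) \;\subseteq\; k_0\langle \bar{a}_1, \bar{a}_2, \ldots\rangle \;=\; \bigcup_{n \geq 1} k_0\langle \bar{a}_1, \ldots, \bar{a}_n\rangle.
\]

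To pass to a finite $N$, I would use that $\cb(p)$---the differential field of definition of the prime differential ideal $\mathcal{I}(p)$ associated to the stationary type $p$---is finitely generated over $k_0$ (as already used in the proof of Proposition~\ref{prop:existence_N} via~\cite[Proposition~4.11]{OPT19}). Picking a finite generating tuple $\bar c$ of $\cb(p)$, each component of $\bar c$ lies in some $k_0\langle \bar{a}_1, \ldots, \bar{a}_{N_j}\rangle$, and setting $N := \max_j N_j$ yields the desired inclusion.

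The main obstacle is justifying fact (a) directly. Unpacking Definition~\ref{def:canonical}, it amounts to showing that any automorphism $\alpha$ of $K$ fixing every $\bar{a}_i$ pointwise must fix the nonforking extension $\tilde p$ of $p$ to $K$. The delicate point is that $\alpha$ need not fix the base field $k$, so one must compare $\tilde p$ with $\alpha(\tilde p)$, which lives naturally over $\alpha(k)$. This is handled by standard machinery: working in a saturated enough ambient model, extending $\alpha$ there, and using symmetry and transitivity of nonforking (Remark~\ref{rem:forking}) together with the finite equivalence relation theorem (Remark~\ref{rem:forking}\ref{rem720item3}) and stationarity (Remark~\ref{rem:some_prop}\ref{rem715item3}) to show that $\tilde p$ and $\alpha(\tilde p)$ cannot be separated by a finite definable equivalence relation on the shared infinite sequence $(\bar{a}_i)$, forcing $\tilde p = \alpha(\tilde p)$.
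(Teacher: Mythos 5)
Your proposal is correct and follows essentially the same route as the paper, whose entire proof is a citation of the standard stability-theoretic fact that the canonical base of a stationary type lies in the definable closure of (an initial segment of) a Morley sequence (\cite[Chapter~1, Lemma~3.19]{Pillay_book}, also \cite[Exercise~8.4.12]{Marker_book}). Your extra step of descending from the infinite sequence to a finite $N$ via finite generation of the field of definition is a harmless elaboration of what the cited lemma already provides in the $\omega$-stable setting.
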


\begin{proof}
  This follows from~\cite[Chapter~1, Lemma~3.19]{Pillay_book} (see also~\cite[Exercise~8.4.12]{Marker_book}).
\end{proof}

\begin{lemma}[{cf. Lemma~\ref{lem:independence}}]\label{lem:indep_mt}
  Let $k_0 \subset k \subset K$ be a differential field.
  Consider tuples $\bar{a}, \bar{b}$ from $K$ such that 
  $\tp(\bar{a} / k)$ is stationary and $\bar{b}$ is independent from $\bar{a}$ over $k$.
  Then $\cb(\bar{a} / k) = \cb(\bar{a} / k\langle \bar{b} \rangle)$.
\end{lemma}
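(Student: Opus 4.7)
The plan is to derive this immediately from the general stability-theoretic fact that the canonical base of a stationary type coincides with that of any of its nonforking extensions, as recorded in Remark~\ref{rem:forking}\ref{rem720item4}. The only preparatory step is to reshape the hypothesis, which is phrased as ``$\bar{b}$ is independent from $\bar{a}$ over $k$'', into the symmetric form we actually need.

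First I would invoke symmetry of forking (Remark~\ref{rem:forking}\ref{rem720item2}) to rewrite the independence of $\bar{b}$ from $\bar{a}$ over $k$ as the independence of $\bar{a}$ from $\bar{b}$ over $k$; equivalently, $\tp(\bar{a}/k\langle\bar{b}\rangle)$ does not fork over $k$. Stationarity of $\tp(\bar{a}/k)$, via Remark~\ref{rem:some_prop}\ref{rem715item3}, then pins down $\tp(\bar{a}/k\langle\bar{b}\rangle)$ as \emph{the} unique nonforking extension of $\tp(\bar{a}/k)$ to $k\langle\bar{b}\rangle$.

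Finally, applying the invariance part of Remark~\ref{rem:forking}\ref{rem720item4} to $p=\tp(\bar{a}/k)$ and its nonforking extension $q=\tp(\bar{a}/k\langle\bar{b}\rangle)$ yields $\cb(p)=\cb(q)$, which is exactly the asserted equality $\cb(\bar{a}/k)=\cb(\bar{a}/k\langle\bar{b}\rangle)$. There is no substantive obstacle here: the content of the lemma is packaged entirely in symmetry of forking and the canonical-base invariance under nonforking extension, both of which are standard facts already cited in Section~\ref{sec:mt_setup}. A parallel algebraic route is also available: stationarity of $\tp(\bar{a}/k)$ forces the ideal generated by $\mathcal{I}_k(\bar{a})$ inside $k\langle\bar{b}\rangle\{\bar{x}\}$ to be prime, so by the definition of independence it must coincide with its only prime component $\mathcal{I}_{k\langle\bar{b}\rangle}(\bar{a})$; the two ideals then share a field of definition, and the equality of canonical bases follows through Example~\ref{ex:ident_me}.
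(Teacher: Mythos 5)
Your proof is correct and rests on the same two ingredients as the paper's: symmetry of forking to recast the hypothesis as ``$\tp(\bar{a}/k\langle\bar{b}\rangle)$ is the (unique, again stationary) nonforking extension of the stationary type $\tp(\bar{a}/k)$'', and the invariance of canonical bases under nonforking extension from Remark~\ref{rem:forking}\ref{rem720item4}. The only cosmetic difference is that you apply that remark once directly with $A=k$, whereas the paper restricts both types to the subfield $F=\cb(p_i)$ and applies it twice; your algebraic aside likewise just re-derives Lemma~\ref{lem:independence} under the stationarity hypothesis.
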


\begin{proof}
  Let $p_1 := \tp(\bar{a} / k)$ and $p_2 := \tp(\bar{a} / k\langle \bar{b} \rangle)$.
  Consider a subfield $F \subset k$. Then
  \begin{itemize}
     \item the independence of $\bar{a}$ and $\bar{b}$ together with Remark~\ref{rem:forking}\ref{rem720item3} implies that $p_1$ does not fork over $F$ iff $p_2$ does not fork over $F$;
     \item since $p_1$ is the restriction of $p_2$ to $k$, $p_1$ restricted to $F$ is stationary iff $p_2$ restricted to $F$ stationary.
  \end{itemize}
  Applying Remark~\ref{rem:forking}\ref{rem720item4} twice, with $A = \cb(p_1)$ and with $A = \cb(p_2)$, we conclude $\cb(p_1) = \cb(p_2)$.
\end{proof}

\begin{proposition}[{Model theoretic version of Proposition~\ref{prop:bound_N}}]
  Let:
  \begin{itemize}
  \item $k$ with $k_0 \subset k \subset K$ be a differential field of finite transcendence degree over $k_0$,
  \item 
  $p$ a stationary type over $k$ (in particular, $p$ is a complete type),
  \item 
  a sequence $\bar{a}_1, \bar{a}_2, \ldots$ of independent realizations of $p$ in $K$,
  \item 
  $r$ the smallest integer such that 
  \[
  \trdeg_{k_0\langle \bar{a}_1, \ldots, \bar{a}_{r} \rangle} k = \trdeg_{k_0\langle \bar{a}_1, \ldots, \bar{a}_{r + 1} \rangle} k.
  \]
  \end{itemize}
  Then 
  \begin{enumerate}[label=(\arabic*)]
      \item $r$ is the smallest integer such that $\cb(p)$ is algebraic over $k_0\langle \bar{a}_1, \ldots, \bar{a}_r \rangle$;
      \item $\cb(p) \subset k_0\langle \bar{a}_1, \ldots, \bar{a}_{r + 1}\rangle$.
  \end{enumerate}
\end{proposition}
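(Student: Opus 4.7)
The plan is to port the algebraic proof of Proposition~\ref{prop:bound_N} to the model-theoretic setting, substituting canonical bases of stationary types for fields of definition and the non-forking calculus for component-of-a-variety arguments. Let $\bar{c}$ be a finite tuple generating $\cb(p)$ as a differential field over $k_0$ (finite because $\cb(p) \subseteq k$ has finite transcendence degree over $k_0$), and set $A_s := k_0\langle\bar{a}_1,\ldots,\bar{a}_s\rangle$ and $G_s := k\langle\bar{a}_1,\ldots,\bar{a}_s\rangle$.

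For part (1), I would prove the per-step equivalence
\[
\trdeg_{A_s} k = \trdeg_{A_{s+1}} k \iff \cb(p) \subseteq A_s^{\alg},
\]
from which the minimality claim follows immediately. The chain: by symmetry of forking (Remark~\ref{rem:forking}\ref{rem720item2}) applied to a finite tuple generating $G_s$ over $A_s$, together with the transcendence-degree characterization of non-forking (Remark~\ref{rem:some_prop}\ref{rem715item2}), the trdeg equality translates to $\tp(\bar{a}_{s+1}/G_s)$ not forking over $A_s$; by Lemma~\ref{lem:indep_mt} we have $\cb(\tp(\bar{a}_{s+1}/G_s)) = \cb(p)$; and by Remark~\ref{rem:forking}\ref{rem720item4}, this non-forking condition is equivalent to $\cb(p) \subseteq A_s^{\alg}$.

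For part (2), I would upgrade ``$\cb(p) \subseteq A_r^{\alg}$'' to ``$\cb(p) \subseteq A_{r+1}$'' via a rigidity argument combined with Proposition~\ref{prop:existence_N_mt}. The latter yields some $N \geqslant r+1$ with $\cb(p) \subseteq A_N$. Given any $\sigma \in \operatorname{Aut}(K/A_{r+1})$ sending $\bar{c}$ to a distinct $A_r$-conjugate, applying $\sigma$ to the tail $\bar{a}_{r+2},\ldots,\bar{a}_N$ of the Morley sequence yields a second Morley sequence for the stationary type $\sigma(p)$ over $\sigma(k)$ (independence transferring by applying $\sigma^{-1}$), with $\bar{a}_1,\ldots,\bar{a}_{r+1}$ as a common initial segment. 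The goal is to convert this shared initial segment into a contradiction with the uniqueness of non-forking extensions of stationary types (Remark~\ref{rem:some_prop}\ref{rem715item3}), forcing $p = \sigma(p)$ and hence $\sigma(\bar{c}) = \bar{c}$.

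I expect this conversion to be the main obstacle. The subtlety is that $\bar{a}_{r+1}$ realizes both $p$ (over $k$) and $\sigma(p)$ (over $\sigma(k)$) independently over $A_r$, yet the two types live over different base fields, so uniqueness of non-forking extensions does not immediately apply. The resolution should come from passing to an appropriate common base containing both $k$ and $\sigma(k)$ and invoking the full strength of the model-theoretic analog of Lemma~\ref{lem:almost_intersection}'s iterated use; this is the technical heart of the argument and is what turns the bound from the unspecified $N$ of Proposition~\ref{prop:existence_N_mt} into the sharp bound $r+1$.
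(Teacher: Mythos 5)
Your part~(1) is correct and in fact takes a cleaner route than the paper: instead of introducing the tuple $\bar{c}$ and decomposing $\trdeg_{k_0\langle \bar{a}_1,\ldots,\bar{a}_s\rangle} k$ as in~\eqref{eq:trdeg_decomposition}, you translate the transcendence-degree stabilization directly into ``$\tp(\bar{a}_{s+1}/k\langle\bar{a}_1,\ldots,\bar{a}_s\rangle)$ does not fork over $k_0\langle\bar{a}_1,\ldots,\bar{a}_s\rangle$'' via symmetry and Remark~\ref{rem:some_prop}\ref{rem715item2}, and then apply Lemma~\ref{lem:indep_mt} and Remark~\ref{rem:forking}\ref{rem720item4}. (One small point worth making explicit: the finiteness of $\trdeg_{k_0}k$ is what lets you reduce the field-level trdeg equality to finitely many jets of a finite tuple from $k$, so that the forking-defect argument closes.)

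Part~(2), however, has a genuine gap, and it sits exactly where you say it does. Your rigidity strategy reduces the claim to showing that an automorphism $\sigma$ fixing $k_0\langle\bar{a}_1,\ldots,\bar{a}_{r+1}\rangle$ pointwise cannot move $\bar{c}$, but the tool needed to rule out a second $A_{r+1}$-conjugate of $\bar{c}$ is not ``passing to a common base'' or an analogue of Lemma~\ref{lem:almost_intersection}; it is the \emph{finite equivalence relation theorem}, Remark~\ref{rem:forking}\ref{rem720item3}. The paper's proof establishes two facts: (P1) $\tp(\bar{a}_{r+2}/k\langle\bar{a}_1,\ldots,\bar{a}_{r+1}\rangle)$ does not fork over $k_0\langle\bar{a}_1,\ldots,\bar{a}_{r+1}\rangle$ (immediate from part~(1) and Remark~\ref{rem:forking}\ref{rem720item4}), and (P2) $\tp(\bar{a}_{r+2}/k_0\langle\bar{a}_1,\ldots,\bar{a}_{r+1}\rangle)$ is \emph{stationary}; stationarity is precisely what upgrades ``$\cb(p)$ algebraic over $A_{r+1}$'' to ``$\cb(p)\subset A_{r+1}$'' via Remark~\ref{rem:forking}\ref{rem720item4}. (P2) is proved by contradiction: two distinct nonforking extensions over $k_0\langle\bar{a}_1,\ldots,\bar{a}_r\rangle$ are separated by a finite equivalence relation $E$ defined over that field, and the indiscernibility of the Morley sequence then forces $\neg E(\bar{a}_i,\bar{a}_j)$ for all distinct $i,j>r$, contradicting the finiteness of the number of $E$-classes. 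Note also that Proposition~\ref{prop:existence_N_mt}, which your sketch leans on, is not needed in the model-theoretic proof at all --- the infinite tail of the Morley sequence enters only through the pigeonhole step just described. Without (P2) or an equivalent use of the finite equivalence relation theorem, your argument does not close.
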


\begin{proof}
  Let $\bar{c}$ be any finite tuple of generators of $\cb(\bar{a}_1 / k)$.
  For every nonnegative integer $\ell$, we set $\bar{A}_\ell := (\bar{a}_1, \ldots, \bar{a}_\ell)$
  We consider the following sequence of types (cf. the varieties $X_i$ in the proof of Proposition~\ref{prop:bound_N}):
  \[
  \tp(\bar{c} / \bar{A}_0) \subset \tp(\bar{c} / \bar{A}_1) \subset \tp(\bar{c} / \bar{A}_2) \subset \ldots
  \]
  \textbf{Claim:} \emph{$\bar{c}$ is algebraic over $k_0\langle \bar{A}_{s}\rangle$ iff $\trdeg_{k_0\langle \bar{A}_s\rangle} \bar{c} = \trdeg_{k_0\langle \bar{A}_{s + 1}\rangle} \bar{c}$.}
  If the equality of the transcendence degrees does not hold, $\trdeg_{k_0\langle \bar{A}_s\rangle} \bar{c} > 0$, so $\bar{c}$ is not algebraic over this field.
  
  We will now show the reverse implication.
  The equality of transcendence degrees implies
  that $\tp(\bar{c} / k_0\langle \bar{A}_{s + 1}\rangle)$ does not fork
  over $k_0\langle \bar{A}_{s}\rangle$.
  The symmetry of forking (Remark~\ref{rem:forking}\ref{rem720item2}) implies that $\tp(\bar{a}_{s + 1} / (\bar{A}_s, \bar{c}))$ does not fork over  $\bar{A}_{s}$.
  Therefore, by Remark~\ref{rem:some_prop}\ref{rem715item2}, we have
  \begin{equation}\label{eq:trdeg_eq}
    \trdeg_{k_0\langle \bar{A}_s\rangle} \bar{a}_{s + 1} = \trdeg_{k_0\langle \bar{c}, \bar{A}_{s}\rangle} \bar{a}_{s + 1}.
  \end{equation}
  Since $\bar{a}_{1}, \ldots, \bar{a}_{s + 1}$ are independent, $\tp(\bar{a}_{s + 1} / k_0\langle \bar{c}, \bar{A}_s \rangle)$ does not fork over $k_0\langle \bar{c} \rangle$.
  Therefore, Remark~\ref{rem:forking}\ref{rem720item4}
  (applying with $A = k_0\langle \bar{c} \rangle$) implies that
  \[
  \cb(\bar{a}_{s + 1} / (\bar{A}_s, \bar{c})) = k_0\langle \bar{c} \rangle.
  \]
  On the other hand, Remark~\ref{rem:forking}\ref{rem720item4} applied with $A = k_0\langle \bar{A}_s\rangle$ together with~\eqref{eq:trdeg_eq} imply that $\cb(\bar{a}_{s + 1} / (\bar{A}_s, \bar{c}))$ is algebraic over $k_0\langle \bar{A}_s \rangle$.
  So, the claim is proved.
  
  Fix $s \geqslant 1$.
  Since $\bar{c}$ generates $\cb(\bar{A}_s / k)$, then Remark~\ref{rem:forking}\ref{rem720item4} implies that $\tp(\bar{A}_s / k)$ is a nonforking extension of $\tp(\bar{A}_s / k_0\langle \bar{c} \rangle)$.
  Let $\bar{d}$ be any finite tuple of generators of $k$ over $k_0$.
  The symmetry of forking (Remark~\ref{rem:forking}\ref{rem720item2}) implies that $\tp(\bar{d} / k_0\langle \bar{c}, \bar{A}_s \rangle )$ does not fork over $k_0\langle \bar{c} \rangle$.
  Thus, 
  \[
  \trdeg_{k_0\langle \bar{A}_s \rangle} k = \trdeg_{k_0\langle \bar{A}_s \rangle} \bar{c} + \trdeg_{k_0\langle \bar{c}, \bar{A}_s \rangle} k =  \trdeg_{k_0\langle \bar{A}_s \rangle} \bar{c} + \trdeg_{k_0\langle \bar{c} \rangle} k.
  \]
  Therefore, (cf. Lemma~\ref{lem:trdeg} and~\eqref{eq:trdeg_decomposition})
  \[
    \trdeg_{k_0\langle \bar{A}_s\rangle} \bar{c} = \trdeg_{k_0\langle \bar{A}_{s + 1}\rangle} \bar{c} \iff \trdeg_{k_0\langle \bar{A}_s\rangle} k = \trdeg_{k_0\langle \bar{A}_{s + 1}\rangle} k.
  \]
  Together with the claim, this proves the first statement of the proposition.
 
Now we prove the second part of the proposition.
  We will do this by showing that 
  \begin{enumerate}[label=(P\arabic*)]
      \item\label{step:nonfork} $\tp(\bar{a}_{r + 2} / k\langle \bar{A}_{r + 1}\rangle)$ does not fork over $k_0\langle \bar{A}_{r + 1} \rangle$ and
      \item\label{step:stat}  $\tp(\bar{a}_{r + 2} / k_0\langle \bar{A}_{r + 1}\rangle)$ is stationary.
  \end{enumerate}
  If we prove these two statements, then, by Remark~\ref{rem:forking}\ref{rem720item4}, we will have 
  \[
  \cb\left(\bar{a}_{r + 2} / k\langle \bar{A}_{r + 1}\rangle\right) \subset k_0\langle \bar{A}_{r + 1} \rangle.
  \]
  Then Lemma \ref{lem:indep_mt} 
   will
  imply that $\cb(\bar{a}_{r + 2} / k) = \cb\left(\bar{a}_{r + 2} / k\langle \bar{A}_{r + 1}\rangle\right) \subset k_0 \langle \bar{A}_{r + 1}\rangle$.
  
  By the first part of the proposition, $\cb(\bar{a}_{r + 2} / k)$ is algebraic over $k_0\langle \bar{A}_{r} \rangle \subset k_0\langle \bar{A}_{r + 1} \rangle$.
  Lemma~\ref{lem:indep_mt} implies that $\cb(\bar{a}_{r + 2} / k) = \cb(\bar{a}_{r + 2} / k\langle \bar{A}_{r + 1}\rangle)$.
  Then Remark~\ref{rem:forking}\ref{rem720item4} applied to type $\tp(\bar{a}_{r + 2} / k\langle \bar{A}_{r + 1}\rangle)$ and $A = k_0\langle \bar{A}_{r + 1} \rangle$ implies that the type does not fork over $A$.
  This proves~\ref{step:nonfork}.
  
  It remains to prove~\ref{step:stat}.
  Assume the contrary, that is, $\tp(\bar{a}_{r + 2} / k_0\langle \bar{A}_{r + 1}\rangle)$
  has at least two nonforking extensions to $k\langle \bar{A}_{r + 1}\rangle$.
  One of them is $p_0 := \tp(\bar{a}_{r + 2} / k\langle \bar{A}_{r + 1}\rangle)$. 
  Since $\bar{a}_{r + 2}$ and $\bar{A}_{r + 1}$ are independent over $k$ $p_0$ does not fork over $k$ (see Definition~\ref{def:independence}).
  Since the type $\tp(\bar{a}_{r + 2} / k)$ is stationary,
Remark~\ref{rem:some_prop}\ref{rem715item3} implies that the type $p_0$ is stationary.
  We denote an extension different from $p_0$ by $q_0 := \tp(\bar{b} / k\langle \bar{A}_{r + 1}\rangle)$.
  Since $\tp(\bar{a}_{r + 2} / k\langle \bar{A}_r \rangle)$ is stationary (similarly to $p_0$), the restrictions $p_1$ and $q_1$ of $p_0$ and $q_0$, respectively, to $k\langle \bar{A}_r \rangle$ are distinct.
  Moreover, since $p_0$ does not fork over $k_0\langle \bar{A}_r \rangle$, the same is true for $q_0$.
  Therefore, Remark~\ref{rem:forking}\ref{rem720item3}, applied to $p_0$ and $q_0$ as distinct nonforking extensions of $\tp(\bar{a}_{r + 1} / k_0 \langle \bar{A}_r \rangle)$ yields a finite equivalence relation $E$ defined over $k_0\langle \bar{A}_r\rangle$ such that $\neg E(\bar{a}_{r + 2}, \bar{b})$.
  Since $\bar{b}$ and $\bar{a}_{r + 2}$ are of the same type over $k_0\langle \bar{A}_{r + 1}\rangle$, we have $\neg E(\bar{a}_{r + 1}, \bar{a}_{r + 2})$.
  Since  the type $p$ is stationary and $\bar{a}_{r + 1}, \bar{a}_{r + 2}, \ldots$ are independent over $k\langle A_r\rangle$, we have (cf.~\cite[Lemma~2.28]{Pillay_book}) that, for every $i \neq j$ such that $i, j > r$,
\[
  \tp((\bar{a}_i, \bar{a}_j) / k\langle A_r\rangle) = \tp((\bar{a}_{r + 1}, \bar{a}_{r + 2}) / k\langle A_r\rangle).
\]
  Therefore, we have $\neg E(\bar{a}_i, \bar{a}_j)$. This contradicts the fact that $E$ defines only finitely many equivalence classes.
  The contradiction finishes the proof of~\ref{step:stat}.
\end{proof}

\section*{Acknowledgments}
We are grateful to Julio Banga and Alejandro Villaverde for learning from them about the importance of  multi-experiment parameter identifiability at the AIM workshop ``Identifiability problems in systems biology'', to Alexandre Sedoglavic for helpful discussion about his algorithm~\cite{Sedoglavic}, and to the referees for their suggestions.
This work was partially supported by the NSF grants  CCF-1564132 and 1563942, DMS-1760448, 1760413, 1853650, 1665035, 1760212, 1853482, 1800492, and 2054721, by the Paris Ile-de-France Region, and by the Fields Institute for Research in Mathematical Sciences

%%%%%%%%%%%%%%%%%%%%%%%%%%%%%%%%%%%%%%%%%%%%%%%%%%%%%%%%
\setlength{\bibsep}{5pt}
\bibliographystyle{abbrvnat}
\bibliography{bib}

\end{document}